\newtheorem{theorem}{Theorem}[section]
\newtheorem{proposition}[theorem]{Proposition}
\theoremstyle{remark}
\newtheorem{remark}[theorem]{Remark}
\theoremstyle{definition}
\newtheorem{example}[theorem]{Example}
\numberwithin{equation}{section}
\renewcommand{\L}{\mathcal{L}}
\newcommand{\I}{\mathcal{I}}
\newcommand{\K}{\mathcal{K}}
\newcommand{\X}{{\R^d}}
\newcommand{\R}{\mathbb{R}}
\newcommand{\N}{\mathbb{N}}
\newcommand{\B}{\mathcal{B}}
\newcommand{\n}{{(n)}}
\newcommand{\la}{\lambda}
\newcommand{\La}{\Lambda}
\newcommand{\Ga}{\Gamma}
\newcommand{\ga}{\gamma}
\newcommand{\eps}{\varepsilon}
\newcommand{\Bb}{\mathcal{B}_{\mathrm{b}}({\X})}
\newcommand{\Bbs}{B_{\mathrm{bs}}(\Ga_0)}
\newcommand{\Fc}{\mathcal{F}_\mathrm{cyl}(\Ga)}
\DeclareMathOperator*{\esssup}{ess\,sup\,}
\renewcommand{\ll}{\langle\!\langle}
\newcommand{\rr}{\rangle\!\rangle}
\begin{document}

\title{Binary jumps in continuum. II. Non-equilibrium process and a Vlasov-type scaling limit}

\author{Dmitri Finkelshtein%
\thanks{Institute of Mathematics, National Academy of Sciences of Ukraine, Kyiv,
Ukraine (\texttt{fdl@imath.kiev.ua}).}
\and Yuri Kondratiev%
\thanks{Fakult\"{a}t f\"{u}r Mathematik, Universit\"{a}t Bielefeld, 33615
Bielefeld, Germany (\texttt{kondrat@math.uni-bielefeld.de})} \and Oleksandr Kutoviy%
\thanks{%
Fakult\"{a}t f\"{u}r Mathematik, Universit\"{a}t Bielefeld, 33615
Bielefeld, Germany (\texttt{kutoviy@math.uni-bielefeld.de}).} \and
Eugene Lytvynov\thanks{Department of Mathematics, Swansea University, Singleton Park, Swansea SA2 8PP, U.K.
(\texttt{e.lytvynov@swansea.ac.uk})}}

\maketitle

\begin{abstract}
Let $\Gamma$ denote the space of all locally finite subsets
(configurations) in $\R^d$.  A stochastic dynamics of binary jumps
in continuum is a Markov process on $\Gamma$ in which pairs of
particles simultaneously hop over $\R^d$. We discuss a
non-equilibrium dynamics of binary jumps. We prove the existence of
an evolution of correlation functions on a finite time interval. We
also show  that a Vlasov-type mesoscopic scaling for such a dynamics
leads to a generalized Boltzmann non-linear equation for the
particle density.
\end{abstract}

\noindent {\small {\em Key words}: continuous system, binary jumps,
non-equilibrium dynamics, correlation functions, scaling limit,
Vlasov scaling, Poisson measure}\vspace{1.5mm}

\noindent {\small {\em 2010 Mathematics Subject Classification}:
60K35, 82C21, 60J75, 35Q83}

\pagestyle{myheadings} \thispagestyle{plain}
\markboth{D.~Finkelshtein, Yu.~Kondratiev, O.~Kutoviy,
E.~Lytvynov}{Binary jumps in continuum. II. Non-equilibrium process}

\section{Introduction}

Let $\Gamma=\Gamma({\R^d})$ denote the space of all locally finite
subsets (configurations) in $\R^d$, $d\in\mathbb N$. A stochastic
dynamics of jumps in continuum is a Markov process on $\Gamma$ in
which groups of particles simultaneously hop over $\R^d$, i.e., at
each jump time several points of the configuration change their
positions.

The simplest case corresponds to the so-called Kawasaki-type
dynamics in continuum. This dynamics is a Markov process on $\Gamma$
in which particles hop over $\R^d$ so that, at each jump time, only
one particle changes its position. For a study of an equilibrium
Kawasaki dynamics in continuum, we refer the reader to the papers
\cite{FKL2007,FKO2009,Glo1982,KKL2008,KLR2007,LO2008,LL2011} and the
references therein. Under the so-called balance condition on the
jump rate, a proper Gibbs distribution is an invariant, and even
symmetrizing measure for such a dynamics, see \cite{Glo1982}. To
obtain a simpler measure, e.g. Poissonian, as a symmetrizing measure
for a Kawasaki-type dynamics, we should either suppose quite
unnatural conditions on the jump rate,  or consider a free dynamics.
In the free Kawasaki dynamics, in the  course of a random evolution,
each particle of the configuration randomly hops over $\mathbb R^d$
without any interaction with other particles (see \cite{KLR2008} for
details).

In the dynamics of binary jumps, at each jump time two points of the (infinite) configuration
change their positions in $\R^d$. A randomness for choosing a
pair of points provides a random interaction between particles of the
system, even in the case where the jump rate only depends on the hopping points
 (and does not depend on the other points of the configuration).
A Poisson measure may be invariant, or even
symmetrizing for such a dynamics. In the first part of the present
paper \cite{FKKL2011a}, we considered such a process with generator
\begin{multline} (LF)(\gamma)=\sum_{\{x_1,\,x_2\}\subset\gamma}\int_{(\R^d)^2}Q( x_1,x_2,dh_1\times dh_2)\\
\times
\big(F(\gamma\setminus\{x_1,x_2\}\cup\{x_1+h_1,x_2+h_2\})-F(\gamma)\big).
\label{genL}
\end{multline}
Here, the measure  $Q( x_1,x_2,dh_1\times dh_2)$ describes the rate
at which two particles, $x_1$ and $x_2$, of configuration $\gamma$
simultaneously hop to $x_1+h_1$ and $x_2+h_2$, respectively. Under
some additional conditions on the measure $Q$, we studied a
corresponding equilibrium dynamics for which a Poisson measure is a
symmetrizing measure. We also considered  two different scalings of
the rate measure $Q$, which   led us to a diffusive dynamics and to
a birth-and-death dynamics, respectively.

In the present paper, we restrict our attention to the special case
of the generator \eqref{genL}. We denote the arrival points by
$y_i=x_i+h_i$, $i=1,2$. An (informal) generator of a binary
jump process of our interest is given by
\begin{multline}\label{tjgen-intro}
\left( LF\right) \left( \gamma \right) =\sum_{\left\{
x_{1},x_{2}\right\} \subset \gamma
}\int_{\mathbb{R}^{d}}\int_{\mathbb{R}^{d}}c\left(
x_1,x_2,y_1,y_2 \right) \\
\times \bigl( F\left( \gamma \setminus \left\{ x_{1},x_{2}\right\}
\cup \left\{ y_{1},y_{2}\right\} \right) -F\left( \gamma \right)
\bigr) dy_{1}dy_{2}.
\end{multline}
Here
\begin{equation}\label{lhgu6r75}c(x_1,x_2,y_1,y_2)=c\left( \left\{ x_{1},x_{2}\right\}
,\left\{y_{1},y_{2}\right\} \right)\end{equation} is a proper non-negative
measurable function. Our aim is to study a non-equilibrium
dynamics corresponding to \eqref{tjgen-intro}. Note that the Poisson
measure with any positive constant intensity will be invariant for
this dynamics. If, additionally,
\begin{equation}\label{utdey7eu}c\left( \left\{
x_{1},x_{2}\right\} ,\left\{y_{1},y_{2}\right\} \right)=c\left(
\left\{y_{1},y_{2}\right\}, \left\{ x_{1},x_{2}\right\}  \right),\end{equation}
then any such measure will even be symmetrizing.

It should be noted that similar dynamics of  finite particle
systems were studied in \cite{BMT1981,BK2003,Kol2006}. In
particular, in \cite{BMT1981}, the authors studied a non-equilibrium
dynamics of velocities of particles, such that the law of
conservation of momentum is satisfied for this system. However, the
methods applied to finite particle systems seem not to be applicable
to infinite systems of our interest.

Let us also note an essential difference between lattice and
continuous systems. An important example of a Markov dynamics on
lattice configurations is the so-called exclusion process. In this
process, particles randomly hop over the lattice under the only
restriction to have no more than one particle at each site of the
lattice. This process may have a Bernoulli measure as an invariant
(and even symmetrizing) measure, but a corresponding stochastic
dynamics has non-trivial properties and possesses an interesting and
reach scaling limit behavior. A straightforward generalization of
the exclusion process to the continuum gives a free Kawasaki
dynamics, because the exclusion restriction (yielding an interaction
between particles)
 obviously disappears for configurations in continuum. To
introduce  the simplest (in certain sense) interaction, we consider the
generator above.

The generator \eqref{tjgen-intro} informally provides a functional
evolution via the (backward) Kolmogorov equation
\begin{equation}
\frac{\partial F_t}{\partial  t}=LF_t,\qquad
F_t\bigm|_{t=0}=F_0.\label{Kolmogor-intro}
\end{equation}
However,  the problem of  existence of a solution to
\eqref{Kolmogor-intro} in some functional space  seems to be a very difficult problem. Fortunately, in
applications, we usually need only an information about a mean value of a
function on $\Ga$ with respect to some probability measure on $\Ga$,
rather than a full point-wise information about this function.
Therefore, we turn to a weak evolution of probability measures
(states) on $\Ga$. This evolution of states is informally given as
a solution to the initial value problem:
\begin{equation}
\frac{d}{d t}\langle F,\mu_t\rangle=\langle LF,\mu_t\rangle , \qquad
\mu_t\bigm|_{t=0}=\mu_0\label{FokkerPlanck-intro},
\end{equation}
provided, of course, that a solution exists. Here $\langle
F,\mu\rangle=\int_\Ga F(\ga)d\mu(\ga)$.

The problem \eqref{FokkerPlanck-intro} may be rewritten in terms of
correlation functionals $k_t$ of states $\mu_t$. The corresponding
dynamics of an infinite vector of correlation functions  has a chain structure which is similar to the BBGKY-hierarchy
for  Hamiltonian dynamics. The corresponding generator of this
dynamics has a two-diagonal (upper-diagonal) structure in a
Fock-type space. Note that the most important case from the point of
view of applications, being also  the most interesting from
the mathematical point of view, is the case of bounded (non-integrable)
correlation functions. Because of  \eqref{FokkerPlanck-intro}, the dynamics of correlation functions should
be treated in a weak form.
Therefore, we consider a pre-dual evolution of the so-called
quasi-observables, whose generator has a low-diagonal structure in a
Fock-type space of integrable functions.

The paper is organized as follows. In Section 2, we describe the
model  and give some necessary preliminary information. Section 3 is
devoted to the functional evolution of both quasi-observables and
correlation functions. We derive some information about the structure and
properties of the two-diagonal generator of the dynamics of
quasi-observables (Propositions~\ref{twodiag} and \ref{opers}). We
further prove a general result for a low-diagonal generator, which
shows that the evolution may be obtained, for all times, recursively
in a scale of Banach spaces whose norms depend on time
(Theorem~\ref{th1}). Next, we construct a dual dynamics on a
finite time interval (Theorem~\ref{th-cr}). We show that this
dynamics is indeed an evolution of correlation functions, which in turn
leads to an evolution of probability measures on $\Ga$
(Theorem~\ref{th-ident}). In Section~4 we consider a Vlasov-type
scaling for our dynamics. The limiting evolution (which exists by
Proposition~\ref{prop-conv-cf}) has a chaos preservation property.
This means that the corresponding dynamics of states transfers a
Poisson measure with a non-homogeneous intensity into a Poisson
measure whose non-homogeneous intensity satisfies a non-linear
evolution (kinetic) equation. We finally present sufficient
conditions for the existence and uniqueness of a solution to this
equation (Proposition~\ref{sol}).

It is worth noting that we rigorously prove the convergence of the
scaled evolution of the infinite particle system to the limiting
evolution (which in turn leads to the kinetic equation). This seems
to be a new step even for finite particle systems.

\section{Preliminaries}
Let  ${\B}({\X})$ be the  Borel $\sigma$-algebra on  ${\X}$,  $d\in\N$, and let  $\Bb$ denote the system of all bounded sets from ${\B}({\X})$.
The configuration space over $\R^d$  is defined as the set
of all locally finite subsets of $\R^d$:
$$\Gamma:=\bigl\{\,\gamma\subset \R^d\bigm|
|\gamma_\Lambda|<\infty\text{ for any }\Lambda\in\Bb\,\bigr\}.$$
Here $|\cdot|$ denotes the cardinality of a set and
$\gamma_\Lambda:= \gamma\cap\Lambda$. One can identify any
$\gamma\in\Gamma$ with the positive Radon measure
$\sum_{x\in\gamma}\delta_x\in{\cal M}(\R^d)$, where  $\delta_x$ is
the Dirac measure with mass at $x$, and  ${\cal M}(\R^d)$
 stands for the set of all
positive  Radon  measures on $\B(\X)$. The space $\Gamma$ can be
endowed with the relative topology as a subset of the space ${\cal
M}(\R^d)$ with the vague topology, i.e., the weakest topology on
$\Gamma$ with respect to which  all maps
$\Gamma\ni\gamma\mapsto\langle f,\gamma\rangle:=\int_{\R^d}
f(x)\,\gamma(dx) =\sum_{x\in\gamma}f(x)$, $f\in C_0(\R^d)$, are
continuous. Here, $C_0(\R^d)$ is the space of all continuous
functions on $\R^d$ with compact support. The corresponding  Borel $\sigma $-algebra $\B(\Ga )$  coincides with the
smallest $\sigma $-algebra on $\Ga$ for which all mappings $\Ga \ni \ga
\mapsto |\ga_ \La |\in{ \N}_0:={\N}\cup\{0\}$ are measurable for any
$\La\in\Bb$, see e.g.\ \cite{AKR1998a}. It is worth noting that $\Ga$ is a Polish space (see e.g.
\cite{KK2006}). 

Let  $\Fc$ denote the set of all measurable
\textit{cylinder functions} on $\Ga$. Each $F\in
\Fc$ is characterized by the
following property: $F(\ga )=F(\ga_\La )$ for some $\La\in
\Bb$ and for any $\ga\in\Ga$.

A stochastic dynamics of binary jumps in continuum is a Markov
process on $\Gamma$ in which pairs of particles simultaneously hop
over $\R^d$, i.e., at each jump time two points of the configuration
change their positions. Thus, an (informal) generator of such a
process has the form \eqref{tjgen-intro}, where $c(x_1,x_2,y_1,y_2)$ is a non-negative measurable function which satisfies \eqref{lhgu6r75} and
$$c(x_1,x_2, \cdot, \cdot)\in L^1_{\mathrm{loc}}(\X\times\X)\quad
\text{for a.a.\ $x_1,x_2\in\X$.}$$
 The function $c$
describes the rate at which pairs of particles hop
over $\X$.

\begin{remark}\label{rem-Ga0}
Note that, in general, the expression on the right hand side of \eqref{tjgen-intro} is not
necessarily well defined for {\it all} $\ga\in\Ga$, even if
$F\in\Fc$. Nevertheless, for such $F$, $\left( LF\right) \left(
\gamma \right)$ has sense at least for all $\gamma\in\Gamma$ with
$|\ga|<\infty$.
\end{remark}

In various applications, the evolution of {\it states of the system} (i.e., {\it measures on
the configuration space\/} $\Gamma$)  helps  one to understand the
behavior of the process and gives possible candidates for invariant states.
In fact, various properties of such an  evolution form the main information
needed in  applications. Using the duality between functions and measures, this evolution may be considered
in a weak form,
given, as usual, by the expression
\begin{equation}\label{dual-F-mu}
\langle F,\mu\rangle=\int_\Ga F(\ga)d\mu(\ga).
\end{equation}
Therefore, the evolution of states is informally given as
a~solution to the initial value problem:
\begin{equation}
\frac{d}{d t}\langle F,\mu_t\rangle=\langle LF,\mu_t\rangle , \qquad
\mu_t\bigm|_{t=0}=\mu_0\label{FokkerPlanck},
\end{equation}
provided, of course, that a solution exists. For a wide class of probability
measures on $\Ga$, one can consider a corresponding  evolution of
their correlation functionals, see below.

The space of $n$-point configurations in an arbitrary $Y\in\B(\X)$
is defined by
\begin{equation*}
\Ga^\n (Y):=\Bigl\{  \eta \subset Y \Bigm| |\eta |=n\Bigr\} ,\quad
n\in { \N}.
\end{equation*}
We set $\Ga^{(0)}(Y):=\{\emptyset\}$. As a set, $\Ga^\n (Y)$ may be
identified with the quotient of $\widetilde{Y^n} := \bigl\{
(x_1,\ldots ,x_n)\in Y^n \bigm| x_k\neq x_l \ \mathrm{if} \ k\neq
l\bigr\}$ with respect to the natural action of the permutation
group $S_n$ on $\widetilde{Y^n}$. Hence, one can introduce the
corresponding Borel $\sigma $-algebra, which will be denoted by
$\B\bigl(\Ga^\n (Y)\bigr)$. The space of finite configurations in an
arbitrary $Y\in\B(\X)$ is defined by
\begin{equation*}
\Ga_0(Y):=\bigsqcup_{n\in {\N}_0}\Ga^\n (Y).
\end{equation*}
This space is equipped with the topology of the disjoint union.
Therefore, we consider the corresponding Borel $\sigma $-algebra $\B
\bigl(\Ga_0(Y)\bigr)$. In the case where $Y=\X$, we will omit the  index
$Y$ in the notation, namely, $\Ga_0=\Ga_{0}(\X)$, $\Ga^\n =\Ga^\n
(\X)$.

The image of the Lebesgue product measure $(dx)^n$ on $\bigl(\Ga^\n
, \B(\Ga^\n )\bigr)$ will be denoted by $m^\n $. We set
$m^{(0)}:=\delta_{\{\emptyset\}}$. Let $z>0$ be fixed. The
Lebesgue--Poisson measure $\la_z$ on $\Ga_0$ is defined by
\begin{equation}  \label{LP-meas-def}
\la_z:=\sum_{n=0}^\infty \frac {z^{n}}{n!}m^\n .
\end{equation}
For any $\La\in\Bb$, the restriction of $\la_z$ to $\Ga
(\La):=\Ga_{0}(\La)$ will also be  denoted by $\la_{z} $. The space
$\bigl( \Ga, \B(\Ga)\bigr)$ is the projective limit of the family of
spaces $\bigl\{\bigl( \Ga(\La), \B(\Ga(\La))\bigr)\bigr\}_{\La \in
\Bb}$. The Poisson measure $\pi_{z}$ on $\bigl(\Ga
,\B(\Ga )\bigr)$ is given as the projective limit of the family of
measures $\{\pi_{z}^\La \}_{\La \in \Bb}$, where $
\pi_{z}^\La:=e^{-zm(\La)}\la_{z} $ is a probability measure on $\bigl(
\Ga(\La), \B(\Ga(\La))\bigr)$ and $m(\La)$ is the Lebesgue measure
of $\La\in \Bb$; for details see e.g.
\cite{AKR1998a}. We will mostly use  the
measure $\la:=\la_1$.

A set $M\in \B (\Ga_0)$ is called bounded if there exist $ \La \in
\Bb$ and $N\in { \N}$ such that $M\subset \bigsqcup_{n=0}^N\Ga^\n
(\La)$. The set of bounded measurable functions with bounded support
will be  denoted by $\Bbs$, i.e., $G\in \Bbs$ if $
G\upharpoonright_{\Ga_0\setminus M}=0$ for some bounded $M\in {\B
}(\Ga_0)$. We also consider  the larger set $L^0_{\mathrm{ls}}(\Ga_0)$ of
all measurable functions on $\Ga_0$ with local support, which means:
$G\in L^0_{\mathrm{ls}}(\Ga_0)$ if $
G\upharpoonright_{\Ga_0\setminus \Ga(\La)}=0$ for some $\La\in\Bb$.
 Any $\B(\Ga_0)$-measurable function $G$ on $ \Ga_0$ is, in
fact, defined by a sequence of functions $\bigl\{G^\n \bigr\}_{n\in{
\N}_0}\,$, where $G^\n $ is a $\B(\Ga^\n )$-measurable function on
$\Ga^\n $. Functions on $\Ga$ and $\Ga_0$ will
be called {\em observables} and {\em quasi-observables},
respectively.

We consider the following mapping from $L^0_{\mathrm{ls}}(\Ga_0)$ into
${{ \mathcal{F}}_{\mathrm{cyl}}}(\Ga )$:
\begin{equation}
(KG)(\ga ):=\sum_{\eta \Subset \ga }G(\eta ), \quad \ga \in \Ga,
\label{KT3.15}
\end{equation}
where $G\in L^0_{\mathrm{ls}}(\Ga_0)$, see, e.g.,
\cite{KK2002,Len1975,Len1975a}. The summation in \eqref{KT3.15} is
taken over all finite subconfigurations $\eta\in\Ga_0$ of the
(infinite) configuration $\ga\in\Ga$; we denote this by the symbol
$\eta\Subset\ga $. The mapping $K$ is linear, positivity preserving,
and invertible with
\begin{equation}
(K^{-1}F)(\eta )=\sum_{\xi \subset \eta }(-1)^{|\eta \setminus \xi
|}F(\xi ),\quad \eta \in \Ga_0.  \label{k-1trans}
\end{equation}

\begin{remark}\label{rem-ext-K-1}
Note that, using formula \eqref{k-1trans}, we can extend the mapping
$K^{-1}$ to functions $F$  which are well-defined, at least, on
$\Ga_0$.
\end{remark}

Let $ \mathcal{M}_{\mathrm{fm}}^1(\Ga )$ denote the set of all
probability measures $\mu $ on $\bigl( \Ga, \B(\Ga) \bigr)$ which
have finite local moments of all orders, i.e., $\int_\Ga |\ga _\La
|^n\mu (d\ga )<+\infty $ for all $\La \in \B_b(\R^{d})$ and $n\in
\N_0$. A measure $\rho $ on $\bigl( \Ga_0, \B(\Ga_0) \bigr)$ is
called locally finite if $\rho (A)<\infty $ for all bounded sets
$A$ from $\B(\Ga _0)$. The set of all such measures is denoted by
$\mathcal{M}_{\mathrm{lf}}(\Ga _0)$.

One can define a transform
$K^{*}:\mathcal{M}_{\mathrm{fm}}^1(\Ga )\rightarrow
\mathcal{M}_{\mathrm{lf}}(\Ga _0),$ which is dual to the
$K$-transform, i.e., for every $\mu \in
\mathcal{M}_{\mathrm{fm}}^1(\Ga )$ and  $G\in \B_{\mathrm{bs}}(\Ga _0)$,
we have
\[
\int_\Ga (KG)(\ga )\mu (d\ga )=\int_{\Ga _0}G(\eta )\,(K^{*}\mu
)(d\eta ).
\]
The measure $\rho _\mu :=K^{*}\mu $ is called the {\it correlation measure
of\/} $\mu $.

As shown in \cite{KK2002}, for any $\mu \in
\mathcal{M}_{\mathrm{fm}}^1(\Ga )$ and  $G\in L^1(\Ga _0,\rho
_\mu )$, the series \eqref{KT3.15} is $\mu $-a.s. absolutely
convergent. Furthermore, $KG\in L^1(\Ga ,\mu )$ and
\begin{equation}
\int_{\Ga _0}G(\eta )\,\rho _\mu (d\eta )=\int_\Ga (KG)(\ga )\,\mu
(d\ga ). \label{Ktransform}
\end{equation}

A measure $\mu \in \mathcal{M}_{\mathrm{fm} }^1(\Ga )$ is called
locally absolutely continuous with respect to $\pi:=\pi_1$ if
$\mu^\La :=\mu \circ p_\La ^{-1}$ is absolutely continuous with
respect to $\pi^\La:=\pi_1^\La $ for all $\La \in \Bb$. Here
$\Ga\ni\ga\mapsto p_\La(\ga):=\ga\cap\La\in\Ga(\La)$. In this case, the correlation measure
$\rho _\mu  $ is absolutely continuous with respect to
$\la=\la_1 $. A {\it correlation functional of\/} $\mu$ is then defined by
\[
k_{\mu}(\eta):=\frac{d\rho_{\mu}}{d\la}(\eta),\quad
\eta\in\Ga_{0}.
\]
The functions $k_\mu^{(0)}:=1$ and
\begin{equation}
k_{\mu}^\n :(\R^{d})^{n}\longrightarrow\R_{+}, \quad n\in\N,
\end{equation}
\[
k_{\mu}^\n (x_{1},\ldots,x_{n}):=
\begin{cases}
k_{\mu}(\{x_{1},\ldots,x_{n}\}), & \mbox{if $(x_{1},\ldots,x_{n})\in
\widetilde{(\R^{d})^{n}}$}\\ 0, & \mbox{otherwise}
\end{cases}
\]
are called {\it correlation functions of\/} $\mu$, and they are well known in statistical
physics, see e.g \cite{Rue1969}, \cite{Rue1970}.

In view of Remark~\ref{rem-Ga0} and \eqref{k-1trans}, the mapping
\[
(\widehat{L}G)(\eta):= (K^{-1}LKG)(\eta), \qquad \eta\in\Ga_0,
\]
is well defined for any $G\in\Bbs$, where $K^{-1}$ is understood in
the sense of Remark~\ref{rem-ext-K-1}.

Let $k$ be a measurable function on $\Ga_0$
such that $\int_Bk\,d\la<\infty$ for any bounded
$B\in\B(\Ga_0)$. Then, for any $G\in\Bbs$,
we can consider an analog of pairing
\eqref{dual-F-mu},
\begin{equation}\label{dual-sm}
\left\langle \!\left\langle G,\,k\right\rangle \!\right\rangle
:=\int_{\Gamma _{0}}G\cdot k\,d\lambda.
\end{equation}
Using this duality, we may consider a dual mapping $\widehat{L}^*$
of $\widehat{L}$. As a result, we obtain two initial value problems:
\begin{align}
&&\frac{\partial G_t}{\partial t}&=\widehat{L}G_t,&&
G_t\bigm|_{t=0}=G_0,&&\label{Kolmogor-q}
\\
&&\frac{d }{d t}\left\langle \!\left\langle G,\,k_t\right\rangle
\!\right\rangle=\langle \!\langle G,\,\widehat{L}^*k_t\rangle
\!\rangle&=\langle \!\langle \widehat{L}G,\,k_t\rangle \!\rangle,&&
k_t\bigm|_{t=0}=k_0.&&\label{Kolmogor-k}
\end{align}
We are going to solve the first problem in a proper functional space
over $\Ga_0$. The second problem, corresponding to \eqref{FokkerPlanck},
can be realized by means of \eqref{dual-sm}, or solved
independently.

The next section is devoted to a (rigorous) solution of
these two problems.

\section{Functional evolution}

We denote, for any $n\in \mathbb N$,
\begin{equation}  \label{spaceXn}
X_{n}:=L^{1}\bigl( ( \mathbb{R}^{d}) ^{n},dx^{\left( n\right) }
\bigr) ,
\end{equation}
where we set $dx^{\left( n\right) }=d x_{1}\dotsm dx_{n} $ and  $X_{0}:=
\mathbb{R}$. The symbol $\|\cdot\|_{X_n}$ stands for the norm of the space
\eqref{spaceXn}.

For an arbitrary $C>0$, we consider the functional Banach space
\begin{equation}
\mathcal{L}_{C}:=L^{1}\bigl(\Gamma _{0},C^{|\eta |}d\lambda (\eta )\bigr).
\label{spaceLC}
\end{equation}
Throughout this paper, the symbol $\left\Vert \cdot \right\Vert _{\L
_C}$ denotes for the norm of the space \eqref{spaceLC}. Then, for
any $G\in\L _C$, we may identify $G$ with the sequence $(G^\n
)_{n\geq0}\,$, where $G^\n $ is a symmetric function on
$(\mathbb{R}^d)^n$ (defined almost everywhere) such that
\begin{equation}\label{normLc}
\|G\|_{\L _C}=\sum_{n=0}^\infty \frac{1}{n!}\int_{(\mathbb{R}^d)^n}
\bigl|G^\n (x^\n )\bigr|C^n dx^\n =\sum_{n=0}^\infty\frac{C^n}{n!}
\bigl\|G^\n \bigr\|_{X_n}<\infty .
\end{equation}
In particular, $G^\n \in X_n$, $n\in\mathbb N_0$. Here and below we set
$x^{\left( n\right) }=\left( x_{1},\ldots ,x_{n}\right)$.

We consider the dual space $(\mathcal{L}_{C})^{\prime }$ of
$\mathcal{L}_{C}$. It is evident that this space can be realized as
the Banach space
\begin{equation*}
{\mathcal{K}}_{C}:=\bigl\{k:\Gamma _{0}\rightarrow {\mathbb{R}}\,\bigm|
k\cdot C^{-|\cdot |}\in L^{\infty }(\Gamma _{0},d\lambda )\bigr\}
\end{equation*}
with the norm $\Vert k\Vert _{{\mathcal{K}}_{C}}:=\Vert k(\cdot ) C^{-|\cdot
|}\Vert _{L^{\infty }(\Gamma _{0},\lambda )}$, where  the
pairing between any $G\in\mathcal{L} _{C}$ and
$k\in{\mathcal{K}}_{C}$ is given by \eqref{dual-sm}. In particular,
$$\left\vert \left\langle \!\left\langle G,k\right\rangle
\!\right\rangle \right\vert \leq \Vert G\Vert _{C}\cdot \Vert k\Vert _{{
\mathcal{K}}_{C}}.$$ Clearly, $k\in {\mathcal{K}}_{C}$ implies
\begin{equation}\label{ineq5}
|k(\eta )|\leq \Vert k\Vert _{{\mathcal{K}}_{C}}\,C^{|\eta |}\qquad \mathrm{
for}\ \lambda \text{-a.a.}\ \eta \in \Gamma _{0}.
\end{equation}

\begin{proposition}\label{twodiag}
Let for a.a. $x_1,x_2,y_1\in\X$
\begin{equation}\label{c-tilde}
\tilde{c}(x_1,x_2,y_1)=\tilde{c}(\{x_1,x_2\},y_1):=\int_{\mathbb{R}
^{d}}c\left(  x_{1},x_{2}, y_{1},y_{2} \right) dy_{2}<\infty.
\end{equation}
Then, for any $G\in B_{\mathrm{bs}}(\Gamma_0)$, the following formula holds
\begin{equation}  \label{descentpairjump}
(\widehat{L}G)\left( \eta \right) =\left( L_{0}G\right) \left( \eta
\right) +\left( WG\right) \left( \eta \right),
\end{equation}
where
\begin{align}
\left( L_{0}G\right) \left( \eta \right) &=\sum_{\left\{ x_{1},x_{2}\right\}
\subset \eta }\int_{\mathbb{R}^{d}}\int_{\mathbb{R}^{d}}c\left(
x_{1},x_{2}, y_{1},y_{2}\right) \nonumber  \\
&\quad\times \bigl( G\left( \eta \setminus \left\{ x_{1},x_{2}\right\} \cup
\left\{ y_{1},y_{2}\right\} \right) -G\left( \eta \right) \bigr)
dy_{1}dy_{2},  \label{L0} \\
\left( WG\right) \left( \eta \right)
&=\sum_{x_{2}\in \eta }\sum_{x_{1}\in \eta \setminus x_{2}}\int_{\mathbb{R}
^{d}}\tilde{c}\left( x_{1},x_{2} ,y_{1}\right) \nonumber  \\
&\quad\times\bigl( G\left( \left( \eta \setminus x_{2}\right)
\setminus x_{1}\cup y_{1}\right) -G\left( \eta \setminus
x_{2}\right) \bigr) dy_{1}. \label{W}
\end{align}
\end{proposition}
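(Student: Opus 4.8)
The plan is to compute $\widehat{L}G = K^{-1}LKG$ directly from the definitions \eqref{tjgen-intro}, \eqref{KT3.15}, and \eqref{k-1trans}, following the standard ``$K$-transform of a jump generator'' combinatorics. First I would fix $G\in B_{\mathrm{bs}}(\Gamma_0)$ and evaluate $LKG$ on a finite configuration $\gamma$ (which, by Remark~\ref{rem-Ga0}, suffices since $K^{-1}$ only probes finite configurations). Since $(KG)(\gamma)=\sum_{\zeta\Subset\gamma}G(\zeta)$, applying $L$ gives, for each ordered choice of the jumping pair $\{x_1,x_2\}\subset\gamma$ and arrival points $y_1,y_2$, the difference $(KG)(\gamma\setminus\{x_1,x_2\}\cup\{y_1,y_2\})-(KG)(\gamma)$. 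Expanding both $K$-transforms as sums over subconfigurations and cancelling all terms whose subconfiguration $\zeta$ contains neither $x_1$ nor $x_2$ (these appear identically in both sums and drop out), one is left with a sum over $\zeta$ that meets $\{x_1,x_2\}$: this is the key bookkeeping step.

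Then I would apply $K^{-1}$ and use the standard identity that $K^{-1}$ composed with $K$ acting on configurations of a fixed structure collapses the alternating sums, leaving a sum over $\eta\in\Gamma_0$ decomposed according to how the pair $\{x_1,x_2\}$ intersects $\eta$. Concretely, the jumping pair can either (i) lie entirely inside $\eta$ — this produces the ``diagonal'' term $L_0G$ in \eqref{L0}, where $\{x_1,x_2\}\subset\eta$ jumps to $\{y_1,y_2\}$ which must also land in $\eta$, so the $y$-integration is against $c(x_1,x_2,y_1,y_2)$; or (ii) have exactly one point, say $x_2$, in $\eta$ while $x_1$ is integrated out over all of $\R^d$ — integrating out $x_1$'s destination $y_2$ produces precisely $\tilde c(x_1,x_2,y_1)$ from \eqref{c-tilde}, giving the ``off-diagonal'' term $WG$ in \eqref{W} with its characteristic double sum $\sum_{x_2\in\eta}\sum_{x_1\in\eta\setminus x_2}$ reflecting that $x_1$ ranges over $\eta$ too after relabelling; the case where both $x_1,x_2$ are outside $\eta$ contributes zero after the $K^{-1}$ cancellation (the integrand is independent of the configuration restricted to $\eta$). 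The asymmetry $L_0$ vs.\ $W$ and the appearance of $\tilde c$ rather than $c$ in the second term is exactly the ``lower-diagonal'' (number-non-increasing-or-preserving) structure alluded to in the introduction.

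The main obstacle I expect is purely combinatorial/measure-theoretic rather than analytic: keeping track of which subconfigurations survive the alternating-sum cancellation and correctly reorganizing the resulting multiple sums and integrals into the two clean expressions \eqref{L0} and \eqref{W}, including getting all the symmetry factors (the $\frac12$ implicit in $\sum_{\{x_1,x_2\}}$ versus ordered pairs) right. One must also justify that every interchange of summation and integration is legitimate: this is where the hypotheses $G\in B_{\mathrm{bs}}(\Gamma_0)$ (so only finitely many $\eta$ contribute and $G$ is bounded with bounded support), $c(x_1,x_2,\cdot,\cdot)\in L^1_{\mathrm{loc}}$, and the finiteness of $\tilde c$ in \eqref{c-tilde} enter — they guarantee all the integrals are finite and Fubini applies. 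I would present this as: reduce to finite $\gamma$, expand and cancel, apply $K^{-1}$, split into the three intersection cases, and identify the surviving two with \eqref{L0}--\eqref{W}, leaving the routine Fubini verifications to the reader.
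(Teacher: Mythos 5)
Your overall skeleton --- evaluate $LKG$ on finite configurations, expand both $K$-transforms, cancel the common subconfigurations, then apply $K^{-1}$ --- is exactly the paper's route, and your first paragraph is sound. The genuine gap is in the step that is supposed to yield \eqref{L0} and \eqref{W}: the trichotomy ``the jumping pair lies in $\eta$ / meets $\eta$ in one point with the other departure point integrated over $\R^d$ / lies outside $\eta$'' does not describe the combinatorics and, carried out literally, cannot produce \eqref{W}. Since $L$ in \eqref{tjgen-intro} only displaces points of the configuration and $K^{-1}$ in \eqref{k-1trans} only evaluates at subconfigurations of $\eta$, after the cancellation the departure pair $\{x_1,x_2\}$ is a subset of $\eta$ in \emph{every} surviving term; accordingly \eqref{W} sums over ordered pairs of points of $\eta$ and contains no $dx_1$ or $dx_2$ integration at all. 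The picture you invoke --- one departure point in $\eta$, the other integrated over $\R^d$ --- is the structure of the dual operator acting on correlation functions (cf.\ \eqref{W-dual}), not of $\widehat{L}$; likewise in \eqref{L0} the arrival points $y_1,y_2$ do not ``land in $\eta$'', they are always integrated over $(\R^d)^2$. The correct dichotomy is not where the departure pair sits, but how many \emph{arrival} points remain recorded in the argument of $G$.

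Concretely, for fixed $\{x_1,x_2\}\subset\gamma$ and $y_1,y_2$ the cancellation leaves the six terms $G(\cdot\cup y_1)+G(\cdot\cup y_2)+G(\cdot\cup\{y_1,y_2\})-G(\cdot\cup x_1)-G(\cdot\cup x_2)-G(\cdot\cup\{x_1,x_2\})$, evaluated through $K$ at $\gamma\setminus\{x_1,x_2\}$. The identity $K^{-1}\bigl(\sum_{\{x_1,x_2\}\subset\cdot}h(\cdot\setminus\{x_1,x_2\},x_1,x_2)\bigr)(\eta)=\sum_{\{x_1,x_2\}\subset\eta}\bigl(K^{-1}h(\cdot,x_1,x_2)\bigr)(\eta\setminus\{x_1,x_2\})$ --- the precise form of your ``collapsing'' step, which itself needs a short proof from \eqref{k-1trans} --- then gives $(\widehat{L}G)(\eta)$ as a sum over $\{x_1,x_2\}\subset\eta$ of these six terms, with $\cdot$ replaced by $\eta\setminus\{x_1,x_2\}$, integrated against $c(x_1,x_2,y_1,y_2)\,dy_1dy_2$. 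The two terms recording both arrival points, $G(\eta\setminus\{x_1,x_2\}\cup\{y_1,y_2\})-G(\eta)$, give \eqref{L0}. In each of the four cross terms only one arrival point appears, so the other is integrated out and produces $\tilde{c}$ of \eqref{c-tilde} (for the $y_2$-terms one also uses the symmetry \eqref{lhgu6r75} in the arrival pair): for instance $\int_{\R^d}\int_{\R^d}c(x_1,x_2,y_1,y_2)G(\eta\setminus\{x_1,x_2\}\cup y_1)\,dy_1dy_2=\int_{\R^d}\tilde{c}(x_1,x_2,y_1)G\bigl((\eta\setminus x_2)\setminus x_1\cup y_1\bigr)\,dy_1$, while $\int_{\R^d}\int_{\R^d}c(x_1,x_2,y_1,y_2)G(\eta\setminus x_2)\,dy_1dy_2=\int_{\R^d}\tilde{c}(x_1,x_2,y_1)G(\eta\setminus x_2)\,dy_1$; adding the two symmetric contributions of each unordered pair converts the sum over $\{x_1,x_2\}\subset\eta$ into the ordered double sum of \eqref{W}. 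Without this correction your case (ii) stalls (there is nothing to integrate $x_1$ against) and case (iii) is vacuous, so the proposal as written does not close; with it, you are carrying out precisely the paper's proof.
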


\begin{proof}
We have, for $F=KG$,
\begin{align*}
&\left( KG\right) \left( \gamma \setminus \left\{ x_{1},x_{2}\right\} \cup
\left\{ y_{1},y_{2}\right\} \right) -\left( KG\right) \left( \gamma \right)
\\
&=\sum_{\eta \Subset \gamma \setminus \left\{ x_{1},x_{2}\right\} \cup
\left\{ y_{1},y_{2}\right\} }G\left( \eta \right) -\sum_{\eta \Subset \gamma
}G\left( \eta \right) \\
&=\sum_{\eta \Subset \gamma \setminus \left\{ x_{1},x_{2}\right\} }G\left(
\eta \cup y_{1}\right) +\sum_{\eta \Subset \gamma \setminus \left\{
x_{1},x_{2}\right\} }G\left( \eta \cup y_{2}\right)  \\
&\quad+\sum_{\eta \Subset
\gamma \setminus \left\{ x_{1},x_{2}\right\} }G\left( \eta \cup y_{1}\cup
y_{2}\right)-\sum_{\eta \Subset \gamma \setminus \left\{ x_{1},x_{2}\right\} }G\left(
\eta \cup x_{1}\right)  \\
&\quad-\sum_{\eta \Subset \gamma \setminus \left\{
x_{1},x_{2}\right\} }G\left( \eta \cup x_{2}\right) -\sum_{\eta \Subset
\gamma \setminus \left\{ x_{1},x_{2}\right\} }G\left( \eta \cup x_{1}\cup
x_{2}\right) \\
&=\left( K\left( G\left( \cdot \cup y_{1}\right) +G\left( \cdot \cup
y_{2}\right) +G\left( \cdot \cup \left\{ y_{1},y_{2}\right\}
\right)\right.\right. \\
&\qquad\left.\left. -G\left( \cdot \cup x_{1}\right) -G\left( \cdot \cup
x_{2}\right) -G\left( \cdot \cup \left\{ x_{1},x_{2}\right\} \right) \right)
\right) \left( \gamma \setminus \left\{ x_{1},x_{2}\right\} \right) .
\end{align*}
Hence,  for any measurable $h$ on $\Gamma \times \mathbb{R}^{d}\times \mathbb{R}
^{d}$,
\begin{align*}
&K^{-1}\left( \sum_{\left\{ x_{1},x_{2}\right\} \subset \cdot }h\left( \cdot
\setminus \left\{ x_{1},x_{2}\right\} ,x_{1},x_{2}\right) \right) \left(
\eta \right) \\
&=\sum_{\xi \subset \eta }\left( -1\right) ^{\left\vert \eta \setminus \xi
\right\vert }\sum_{\left\{ x_{1},x_{2}\right\} \subset \xi }h\left( \xi
\setminus \left\{ x_{1},x_{2}\right\} ,x_{1},x_{2}\right) \\
&=\sum_{\left\{ x_{1},x_{2}\right\} \subset \eta }\,\,\sum_{\xi
\subset \eta \setminus \left\{ x_{1},x_{2}\right\} }\left( -1\right)
^{\left\vert \eta \setminus \left\{ x_{1},x_{2}\right\} \setminus
\xi \right\vert }h\left( \xi
,x_{1},x_{2}\right) \\
&=\sum_{\left\{ x_{1},x_{2}\right\} \subset \eta }\left( K^{-1}h\left( \cdot
,x_{1},x_{2}\right) \right) \left( \eta \setminus \left\{
x_{1},x_{2}\right\} \right) .
\end{align*}
Therefore,
\begin{align*}
&(\widehat{L}G)\left( \eta \right) =\left( K^{-1}LKG\right) \left( \eta \right) \\
&=\sum_{\left\{ x_{1},x_{2}\right\} \subset \eta }\int_{\mathbb{R}^{d}}\int_{
\mathbb{R}^{d}}c\left( \left\{ x_{1},x_{2}\right\} ,\left\{
y_{1},y_{2}\right\} \right)  \bigl( G\left( \eta \setminus \left\{ x_{1},x_{2}\right\} \cup
y_{1}\right) \\&\qquad\qquad +G\left( \eta \setminus \left\{ x_{1},x_{2}\right\} \cup
y_{2}\right) +G\left( \eta \setminus \left\{ x_{1},x_{2}\right\} \cup
\left\{ y_{1},y_{2}\right\} \right) \bigr) dy_{1}dy_{2} \\
&-\sum_{\left\{ x_{1},x_{2}\right\} \subset \eta }\int_{\mathbb{R}^{d}}\int_{
\mathbb{R}^{d}}c\left( \left\{ x_{1},x_{2}\right\} ,\left\{
y_{1},y_{2}\right\} \right) \bigl( G\left( \eta \setminus \left\{ x_{1},x_{2}\right\} \cup
x_{1}\right) \\&\qquad\qquad+G\left( \eta \setminus \left\{ x_{1},x_{2}\right\} \cup
x_{2}\right) +G\left( \eta \setminus \left\{ x_{1},x_{2}\right\} \cup
\left\{ x_{1},x_{2}\right\} \right) \bigr) dy_{1}dy_{2} ,
\end{align*}
from where the statement follows.
\end{proof}

As we noted before, any function $G$ on $\Ga_0$ may be identified
with the infinite vector $\bigl( G^{\left( n\right) }\bigl) _{n\geq
0}$ of symmetric functions. Due to this identification, any operator
on functions on $\Ga_0$ may be considered as an infinite operator
matrix. By Proposition~\ref{twodiag}, the operator matrix $\widehat
L$ has a two-diagonal structure. More precisely, on the main
diagonal, we have operators $L_{0}^{\left( n\right) }$, $n\in\mathbb
N_0$, where $L_{0}^{\left( 0\right) }=L_{0}^{(1)}=0$ and for $n\geq
2$
\begin{multline}
\bigl(L_{0}^{\left( n\right) }G^{\left( n\right)
}\bigr)(x^\n)=\sum_{i=1}^n\sum_{j=i+1}^n\int_{\mathbb{R}^{d}}\int_{\mathbb{R}^{d}}c\left(
x_{i},x_{j}, y_{1},y_{2},\right)  \\
\times \Bigl( G^\n\bigl( x_1,\ldots,\underset{\overset{\wedge}{i}}{y_1},\ldots,\underset{\overset{\wedge}{j}}{y_2},\ldots,x_n\bigr) -G^\n\bigl( x^\n\bigr) \Bigr)
dy_{1}dy_{2}.\label{Ln}
\end{multline}
On the low diagonal, we have operators $W^\n$, $n\in\mathbb N$,
where $W^{(1)}=0$ and for $n\geq 2$
\begin{align}
\bigl( W^\n G^{(n-1)}\bigr) \bigl( x^\n \bigr)
=&\,2\sum_{i=1}^n\sum_{j=i+1}^n\int_{\mathbb{R}
^{d}}\tilde{c}\left( x_{i},x_{j}
,y_{1}\right)\nonumber\\&\times\Bigl( G^{(n-1)}\bigl(
x_1,\ldots,x_{i-1},x_{i+1},\ldots,\underset{\overset{\wedge}{j}}{y_1},\ldots,x_n\bigr)\nonumber
\\&\qquad-G^{(n-1)}\bigl(
x_1,\ldots,x_{i-1},x_{i+1},\ldots,x_n\bigr) \Bigr) dy_{1}.\label{Wn}
\end{align}

Let us formulate our main conditions on the
rate $c$:
\begin{align}
c_1:&=\esssup_{x_{1},x_2\in \mathbb{R}^{d}}\int_{\mathbb{R}^{d}}\int_{\mathbb{R}
^{d}}c\left( x_{1},x_{2}, y_{1},y_{2} \right) dy_{1}dy_{2}<\infty
, \label{c-1}\\ c_2:&=\esssup_{x_{1},x_2\in \mathbb{R}^{d}}\int_{\mathbb{R}^{d}}\int_{\mathbb{R}
^{d}}c\left( y_{1},y_{2},
x_{1},x_{2} \right) dy_{1}dy_{2}<\infty, \label{c-2}\\
c_3:&=\esssup_{x_1\in \mathbb{R}^{d}}\int_{\mathbb{R}^{d}}\int_{\mathbb{R}^{d}}\int_{\mathbb{R}
^{d}}c\left( x_1,x_{2}, y_{1},y_{2} \right) dy_{1}dy_{2}dx_2<\infty
, \label{c-3}\\
c_4:&=\esssup_{x_1\in \mathbb{R}^{d}}\int_{\mathbb{R}^{d}}\int_{\mathbb{R}^{d}}\int_{\mathbb{R}
^{d}}c\left( y_{1},y_{2}, x_{1},x_{2} \right)
dy_{1}dy_{2}dx_2<\infty . \label{c-4}
\end{align}
Under  conditions \eqref{c-1}--\eqref{c-2}, we define the
following functions
\begin{align}\label{a1}
a_1(x_1,x_2):&=\int_{\mathbb{R}^{d}}\int_{\mathbb{R}
^{d}}c\left( x_{1},x_{2},
y_{1},y_{2} \right) dy_{1}dy_{2}\in[0,\infty),\\
a_2(x_1,x_2):&=\int_{\mathbb{R}^{d}}\int_{\mathbb{R}
^{d}}c\left( y_{1},y_{2}, x_{1},x_{2} \right)
dy_{1}dy_{2}\in[0,\infty).\label{a2}
\end{align}

\begin{remark} Note that, if the function $c$ satisfies the symmetry condition \eqref{utdey7eu}, then conditions \eqref{c-2}, \eqref{c-4} follow from \eqref{c-1}, \eqref{c-3}, respectively, and
\begin{equation}\label{huytew4u6}
a_1(x_1,x_2)=a_2(x_1,x_2).\end{equation}
In this case, the operator
$L_0^\n$ is symmetric in $L^2\bigl((\X)^n, dx^\n\bigr)$. Moreover,
the operator $L$ given by \eqref{tjgen-intro} is (informally) symmetric in
$L^2(\Ga,\pi_z)$ for any $z>0$.
\end{remark}

\begin{proposition}\label{opers}
{\rm (i)}~Let \eqref{c-1}, \eqref{c-2} hold. Then, for any $G^\n\in
X_n$,
\begin{equation}\label{estLn}
\|L_0^\n G^\n\|_{X_n}\leq \frac{n(n-1)}{2}(c_1+c_2)\|G^\n\|_{X_n}.
\end{equation}
Moreover, if additionally
\begin{equation}\label{dominate}
a_2(x_1,x_2)\leq a_1(x_1,x_2) \quad \mathrm{for
\ a.a.}\
x_1,x_2\in
\X,
\end{equation}
then $L_0^\n$ is the generator of a contraction
semigroup in $X_n$.

{\rm (ii)}~Let \eqref{c-3}, \eqref{c-4} hold. Then, for any
$G^{(n-1)}\in X_{n-1}$,
\begin{equation}\label{estWn}
\|W^\n G^{(n-1)}\|_{X_n}\leq n(n-1)(c_3+c_4)\|G^{(n-1)}\|_{X_{n-1}}.
\end{equation}
\end{proposition}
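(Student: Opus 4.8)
The plan is to separate, in both $L_0^\n$ and $W^\n$, the multiplication operator coming from the loss terms $-G(\eta)$ from the integral operator coming from the gain terms, and to bound each $L^1$-norm by Tonelli's theorem after integrating first over the appropriate variables, using the symmetry of the kernels $G^\n$ and the relevant condition among \eqref{c-1}--\eqref{c-4}. Concretely, for (i) I would write $L_0^\n=B_n-M_n$, where $M_n$ is multiplication by $x^\n\mapsto\sum_{1\le i<j\le n}a_1(x_i,x_j)$ (whose norm is $\le\frac{n(n-1)}{2}c_1$ by \eqref{c-1} and \eqref{a1}) and $B_n$ is the integral operator from the first summand in \eqref{Ln}. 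For the $(i,j)$-summand of $B_n$ I would move the modulus inside the integral and integrate first in $x_i,x_j$; by \eqref{a2} and \eqref{c-2} this replaces $c(x_i,x_j,y_1,y_2)$ by $a_2(y_1,y_2)\le c_2$, and since $G^\n$ is symmetric what remains is exactly $c_2\|G^\n\|_{X_n}$. Summing over the $\frac{n(n-1)}{2}$ pairs and using the triangle inequality gives \eqref{estLn}.

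Since $L_0^\n$ is then a bounded operator on $X_n$, to obtain the contraction semigroup it suffices to verify that $L_0^\n$ is dissipative in $L^1$ and invoke the Lumer--Phillips theorem (the range condition being automatic, as $\lambda I-L_0^\n$ is invertible for $\lambda>\|L_0^\n\|$ via a Neumann series). Dissipativity in $L^1$ reduces to
\[
\int_{\{G^\n\neq0\}}(L_0^\n G^\n)\,\operatorname{sgn}(G^\n)\,dx^\n-\int_{\{G^\n=0\}}|L_0^\n G^\n|\,dx^\n\le0 .
\]
On $\{G^\n=0\}$ the term $M_nG^\n$ vanishes, and on $\{G^\n\neq0\}$ one has $-(M_nG^\n)\operatorname{sgn}(G^\n)=-\sum_{i<j}a_1(x_i,x_j)|G^\n|$; hence the left-hand side is bounded by $\int_{(\R^d)^n}|B_nG^\n|\,dx^\n-\sum_{i<j}\int_{(\R^d)^n}a_1(x_i,x_j)|G^\n(x^\n)|\,dx^\n$. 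Running the computation of the previous paragraph for $|G^\n|$ (this time keeping equalities) gives $\int|B_nG^\n|\le\sum_{i<j}\int a_2(x_i,x_j)|G^\n|$, so the whole quantity is at most $\sum_{i<j}\int(a_2-a_1)(x_i,x_j)|G^\n|\,dx^\n$, which is non-positive by \eqref{dominate}. (Equivalently, one checks $\int_{(\R^d)^n}L_0^\n H\,dx^\n\le0$ for $H\ge0$ and combines this with positivity-preservation of $e^{tL_0^\n}$.)

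For part~(ii) I would again split $W^\n$ into its two summands and bound $\|W^\n G^{(n-1)}\|_{X_n}$ by $2\sum_{1\le i<j\le n}$ of the respective $L^1$-norms. In the loss term, integrating first in $y_1$ turns $\int\tilde c(x_i,x_j,y_1)\,dy_1$ into $a_1(x_i,x_j)$ by \eqref{c-tilde} and \eqref{a1}; integrating next in $x_i$ gives a factor $\int_{\R^d}a_1(x_i,x_j)\,dx_i\le c_3$ by \eqref{c-3}, while the remaining $n-1$ variables produce $\|G^{(n-1)}\|_{X_{n-1}}$. In the gain term, integrating first in $x_i,x_j$ produces $\int_{(\R^d)^3}c(x_i,x_j,y_1,y_2)\,dx_idx_jdy_2$, which is $\le c_4$ for a.a.\ $y_1$ by \eqref{c-4} once source and target variables are matched through the symmetry \eqref{lhgu6r75}; the remaining $n-1$ variables again give $\|G^{(n-1)}\|_{X_{n-1}}$. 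Thus each pair $(i,j)$ contributes at most $(c_3+c_4)\|G^{(n-1)}\|_{X_{n-1}}$, and the prefactor $2$ together with $\binom{n}{2}$ pairs yields \eqref{estWn}.

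I expect the only genuinely delicate point to be the semigroup statement in~(i): the norm inequalities are a routine application of Fubini once the order of integration is fixed, but verifying dissipativity in the non-reflexive space $L^1$ — in particular handling the null set of $G^\n$ — and, throughout both parts, correctly pairing the ``source'' variables of $c(\cdot,\cdot)$ with the right $\esssup$-condition via the symmetry \eqref{lhgu6r75}, is where attention is needed.
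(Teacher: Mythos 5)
Your argument is correct, and for the norm bounds \eqref{estLn}, \eqref{estWn} it is exactly what the paper means by ``follow directly'': the same gain/loss splitting, with Tonelli applied so that the source (resp.\ target) variables are integrated out first, producing $a_2\le c_2$, $a_1\le c_1$ for \eqref{estLn} and the constants $c_3,c_4$ for \eqref{estWn}; your use of \eqref{lhgu6r75} to fix which variable carries the $\esssup$ in the loss term of $W^{(n)}$ is the right bookkeeping. The only place where you genuinely diverge from the paper is the verification of dissipativity of $L_0^{(n)}$. You work with the $L^1$ duality map, i.e.\ the sign-function criterion (choosing the functional to be $\operatorname{sgn}G^{(n)}$ on $\{G^{(n)}\neq0\}$ and $-\operatorname{sgn}(L_0^{(n)}G^{(n)})$ on the null set), and reduce everything to $\int|B_nG^{(n)}|\le\sum_{i<j}\int a_2(x_i,x_j)|G^{(n)}|$ together with \eqref{dominate}; boundedness then makes the range condition in Lumer--Phillips automatic. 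The paper instead verifies the equivalent resolvent-type inequality $\|\kappa G^{(n)}-L_0^{(n)}G^{(n)}\|_{X_n}\ge\kappa\|G^{(n)}\|_{X_n}$ for all $\kappa>0$ (citing \cite{EN2000}), obtained from the reverse triangle inequality $\|f-g\|\ge\bigl|\|f\|-\|g\|\bigr|$ applied to $f=B_nG^{(n)}$ and $g=(\sum_{i<j}a_1(x_i,x_j)+\kappa)G^{(n)}$, followed by the very same estimate $\|B_nG^{(n)}\|_{X_n}\le\sum_{i<j}\int a_2(x_i,x_j)|G^{(n)}|$ and \eqref{dominate}. So the key analytic input is identical; the paper's route avoids the null-set and duality-set discussion in the non-reflexive space $L^1$, while yours makes the positivity mechanism (gain bounded by $a_2$, loss weighted by $a_1$) more transparent and also suggests the alternative you mention in parentheses --- though for that variant you would still need to establish positivity preservation of $e^{tL_0^{(n)}}$ separately (e.g.\ by Lie--Trotter, as the paper does later for $(L_0^{(n)})^*$ in the proof of Theorem~\ref{th-ident}), so it is not quite a shortcut. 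Either way, your proof is complete and correct.
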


\begin{proof}
The estimates \eqref{estLn}, \eqref{estWn}
follow directly from \eqref{Ln}, \eqref{Wn}
and \eqref{c-1}--\eqref{c-4}. To prove that
the bounded operator $L_0^\n$ is the generator
of a contraction semigroup in $X_n$, it is
enough to show that $L_0^\n$ is dissipative
(see e.g. \cite{LF1961}).
For any $\kappa>0$ and   $G^\n\in
X_n$,
\begin{align*}
&\|L_0^\n G^\n -\kappa G^\n\|_{X_n}\\=&\,\int_{(\X)^n} \Biggl|
\sum_{i=1}^n\sum_{j=i+1}^n\int_{\mathbb{R}^{d}}\int_{\mathbb{R}^{d}}c\left(
x_{i},x_{j}, y_{1},y_{2},\right)G^\n\bigl( x_1,\ldots,\underset{\overset{\wedge}{i}}{y_1},\ldots,\underset{\overset{\wedge}{j}}{y_2},\ldots,x_n\bigr)dy_{1}dy_{2}   \label{Ln}\\
&\qquad-\sum_{i=1}^n\sum_{j=i+1}^n a_{1}(x_i,x_j) G^\n\bigl( x^\n\bigr) -\kappa G^\n\bigl( x^\n\bigr)
\Biggr|
dx^\n,\\\intertext{ and, using the obvious inequality $\|f-g\|_{X_n}\geq\bigl|\|f\|_{X_n}-\|g\|_{X_n}\bigr|$,
we  continue}\geq&\, \Biggl|\int_{(\X)^n}
\biggl| \sum_{i=1}^n\sum_{j=i+1}^n\int_{\mathbb{R}^{d}}\int_{\mathbb{R}^{d}}c\left(
x_{i},x_{j}, y_{1},y_{2},\right)\\&\qquad\times
G^\n\bigl( x_1,\ldots,\underset{\overset{\wedge}{i}}{y_1},\ldots,\underset{\overset{\wedge}{j}}{y_2},\ldots,x_n\bigr)dy_{1}dy_{2}\biggr|dx^\n\\
&\qquad\qquad-\int_{(\X)^n}\biggl(\sum_{i=1}^n\sum_{j=i+1}^na_{1}(x_i,x_j)+\kappa\biggr) \Bigl|G^\n\bigl( x^\n\bigr)\Bigr|
dx^\n\Biggr|.
\end{align*}
Since $G^\n$ is a symmetric function, we get
\begin{align*}
&\int_{(\X)^n}
\biggl| \sum_{i=1}^n\sum_{j=i+1}^n\int_{\mathbb{R}^{d}}\int_{\mathbb{R}^{d}}c\left(
x_{i},x_{j}, y_{1},y_{2},\right)\\&\qquad\times G^\n\bigl( x_1,\ldots,\underset{\overset{\wedge}{i}}{y_1},\ldots,\underset{\overset{\wedge}{j}}{y_2},\ldots,x_n\bigr)dy_{1}dy_{2}\biggr|dx^\n\\
\leq&\,\frac{n(n-1)}{2} \int_{(\X)^n} a_2(y_1,y_2)\Bigl| G^\n\bigl(
y_1,y_2,x_1,\dots,x_{n-2}\bigr)\Bigr|dy_{1}dy_{2}dx_1\ldots
dx_{n-2}\,.
\end{align*}
Therefore, if \eqref{dominate} holds, then
\[
\|L^\n_0 G^\n -\kappa G^\n\|_{X_n}\geq \kappa \|G^\n\|_{X_n},
\]
which proves the dissipativity of $L^\n_0$, see e.g.\
\cite[Proposition 3.23]{EN2000}.
\end{proof}

\begin{remark} If the function $c$ satisfies the symmetry condition \eqref{utdey7eu}, then  \eqref{dominate} trivially follows from \eqref{huytew4u6}.
 \end{remark}

In Theorems \ref{th1} and \ref{th-cr} below, we formulate general
results which are applicable to our dynamics under assumptions
\eqref{c-1}--\eqref{c-4}, \eqref{dominate}.

\begin{theorem}\label{th1}
Consider the initial value problem
\begin{equation}
\begin{aligned}
\dfrac{\partial }{\partial t}G_{t}\left( \eta \right) &=\left(
L_{0}G_{t}\right) \left( \eta \right) +\left( WG_{t}\right) \left(
\eta \right),
\quad t>0,\ \eta\in\Ga_0, \\[2mm]
G_{t}\bigm\vert _{t=0}&=G_{0}.
\end{aligned}
\label{Cauchy}
\end{equation}
Here, for any $G=\left( G^{\left( n\right) }\right) _{n\geq 0}\,$,
\begin{align*}
\left( L_{0}G\right) ^{\left( n\right) } &=L_{0}^{\left( n\right) }G^{\left(
n\right) },~n\geq 1; \\
\left( WG\right) ^{\left( n\right) } &=W^{\left( n\right) }G^{\left(
n-1\right) },~n\geq 2; \\
(L_0G)^{(0)}&=(WG)^{(0)}=(WG)^{(1)}=0.
\end{align*}
Further suppose  that $L_{0}^{\left( n\right) }$ is a bounded
generator of a strongly
continuous contraction semigroup $e^{tL_{0}^{\left( n\right) }}$ in $X_{n}$,
while $W^{\left( n\right) }$ is a bounded operator from $X_{n-1}$ into $X_{n}$
whose norm satisfies
\begin{equation*}
\bigl\Vert W^{\left( n\right) }\bigr\Vert _{X_{n-1}\rightarrow X_{n}}\leq
Bn\left( n-1\right) ,\quad n\geq 1,
\end{equation*}
for some $B\geq 1$ which is independent of $n$.

Let $C>0$ and  $G_{0 }\in \L_C$. Then the initial value problem (\ref{Cauchy})\ has a unique solution $G_t\in\L_{\rho(t,C)}$, where
\begin{equation}\label{weight}
\rho(t,C):=\frac{C}{1+BCt}\,.
\end{equation}
Furthermore,
\begin{equation}  \label{maincompar}
\|G_t\|_{\L_{\rho(t,C)}}\leq\|G_0 \|_{\L_{C}}.
\end{equation}
\end{theorem}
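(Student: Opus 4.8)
The plan is to exploit the two-diagonal (lower-triangular) structure of the generator $\widehat L=L_0+W$ with respect to the $n$-particle decomposition, which allows the Cauchy problem to be solved recursively, level by level, while carefully managing the growth of norms through a time-dependent weight $C\mapsto\rho(t,C)$. The key point is that $(L_0 G)^{(n)}$ depends only on $G^{(n)}$ and $(WG)^{(n)}$ depends only on $G^{(n-1)}$, so the system decouples into a forward cascade of inhomogeneous linear ODEs in the Banach spaces $X_n$:
\begin{equation*}
\frac{\partial}{\partial t}G_t^{(n)} = L_0^{(n)}G_t^{(n)} + W^{(n)}G_t^{(n-1)},\qquad G_t^{(n)}\big|_{t=0}=G_0^{(n)},
\end{equation*}
with $G_t^{(0)}\equiv G_0^{(0)}$ and $G_t^{(1)}=e^{tL_0^{(1)}}G_0^{(1)}=G_0^{(1)}$ (since $L_0^{(1)}=0$). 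Assuming $G_t^{(n-1)}$ has already been constructed, Duhamel's formula gives the unique solution at level $n$,
\begin{equation*}
G_t^{(n)} = e^{tL_0^{(n)}}G_0^{(n)} + \int_0^t e^{(t-s)L_0^{(n)}}W^{(n)}G_s^{(n-1)}\,ds,
\end{equation*}
which is well defined because $L_0^{(n)}$ generates a strongly continuous contraction semigroup on $X_n$ and $W^{(n)}\colon X_{n-1}\to X_n$ is bounded. Uniqueness at each level, and hence uniqueness of the whole vector $G_t=(G_t^{(n)})_{n\ge0}$, follows by the standard Gronwall argument for each inhomogeneous equation.

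The analytic heart of the argument is the norm estimate \eqref{maincompar}, which one proves by establishing a bound on each $\|G_t^{(n)}\|_{X_n}$ and then summing the series defining $\|\cdot\|_{\L_{\rho(t,C)}}$. First I would extract from Duhamel's formula and the contraction property the inequality
\begin{equation*}
\|G_t^{(n)}\|_{X_n}\le \|G_0^{(n)}\|_{X_n} + Bn(n-1)\int_0^t \|G_s^{(n-1)}\|_{X_{n-1}}\,ds .
\end{equation*}
The claim is then that, for all $t\ge0$ and $n\ge0$,
\begin{equation*}
\frac{\rho(t,C)^n}{n!}\,\|G_t^{(n)}\|_{X_n}\le \sum_{m\ge n}\frac{C^m}{m!}\,\|G_0^{(m)}\|_{X_m}\cdot(\text{something summable}),
\end{equation*}
but the cleaner route is to introduce the partial tails $s_n(t):=\sum_{m\ge n}\frac{\rho(t,C)^m}{m!}\|G_t^{(m)}\|_{X_m}$ — or, more directly, to verify that $u(t):=\|G_t\|_{\L_{\rho(t,C)}}=\sum_{n\ge0}\frac{\rho(t,C)^n}{n!}\|G_t^{(n)}\|_{X_n}$ satisfies $u(t)\le u(0)=\|G_0\|_{\L_C}$. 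The mechanism is that differentiating $\rho(t,C)^n/n!$ in $t$ produces a factor $-\,n\,BC\rho(t,C)^2/(n!\,C)\cdot\rho(t,C)^{n-1}$, i.e. a \emph{negative} contribution proportional to $n\rho(t,C)$, while the $W^{(n)}$-term in $\frac{d}{dt}\|G_t^{(n)}\|_{X_n}$ contributes at most $Bn(n-1)\|G_t^{(n-1)}\|_{X_{n-1}}$ to level $n$; re-indexing $n\mapsto n+1$ shows this gain at level $n+1$ is exactly $B(n+1)n\,\rho(t,C)^{n+1}/(n+1)!\cdot\|G_t^{(n)}\|_{X_n}= Bn\rho(t,C)^{n+1}/n!\cdot\|G_t^{(n)}\|_{X_n}$, and since $\rho(t,C)^{n+1}=\rho(t,C)\cdot\rho(t,C)^n$ this is dominated by the loss term from differentiating the weight at level $n$, provided the derivative of $\rho$ is chosen so that $-\partial_t\rho(t,C)=BC\rho(t,C)^2/C\cdot\text{(constant)}$ — which is precisely the ODE $\partial_t\rho=-B\rho^2$ whose solution with $\rho(0,C)=C$ is \eqref{weight}. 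Telescoping these level-wise cancellations yields $\frac{d}{dt}u(t)\le0$ in the appropriate (Dini/one-sided) sense, hence \eqref{maincompar}.

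To make the last step rigorous without differentiating an infinite sum term by term, I would instead argue directly with the integral inequalities: set $b_n(t):=\|G_t^{(n)}\|_{X_n}$, use the Duhamel bound, multiply by $\rho(t,C)^n/n!$, and sum, interchanging sum and integral by Tonelli (all terms nonnegative). One gets
\begin{equation*}
\sum_{n\ge0}\frac{\rho(t,C)^n}{n!}b_n(t)\le \sum_{n\ge0}\frac{\rho(t,C)^n}{n!}b_n(0) + B\int_0^t\sum_{n\ge2}\frac{\rho(t,C)^n}{n!}\,n(n-1)\,b_{n-1}(s)\,ds;
\end{equation*}
since $\rho(t,C)\le\rho(s,C)$ for $s\le t$ and $\frac{\rho^n}{n!}n(n-1)=\frac{\rho^2\,\rho^{n-2}}{(n-2)!}$, and since $\rho(t,C)^n/n!\le \rho(0,C)^n/n!\,=C^n/n!$ bounds the first sum by... here one sees the need to incorporate the time dependence of the weight into the Gronwall loop, which is exactly why the statement is phrased with $\rho(t,C)$ rather than a fixed $C$. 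The main obstacle — and the only genuinely delicate point — is precisely this bookkeeping: showing that the $n$-dependent factor $n(n-1)$ coming from $\|W^{(n)}\|$ is \emph{exactly} absorbed by the decay rate of the weight $\rho(t,C)=C/(1+BCt)$, with no loss of constants, so that one obtains the sharp comparison $\|G_t\|_{\L_{\rho(t,C)}}\le\|G_0\|_{\L_C}$ and not merely a bound with an extra exponential factor. Once the weight is recognized as the solution of $\partial_t\rho=-B\rho^2$, the cancellation is forced and the estimate closes; everything else (existence, uniqueness via Duhamel and Gronwall at each fixed level, strong continuity in $t$) is routine.
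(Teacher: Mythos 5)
Your overall architecture (recursive Duhamel solution level by level, uniqueness per level, and a weighted estimate in which the time-dependent weight absorbs the $n(n-1)$ growth of $\|W^{(n)}\|$) is sound, and your identification of the mechanism is exactly right: $\rho(t,C)=C/(1+BCt)$ is the solution of $\partial_t\rho=-B\rho^2$, and with that choice the loss from differentiating the weight at level $n$ exactly cancels the gain that $W^{(n+1)}$ feeds into level $n+1$. This is, however, a genuinely different route from the paper's: the paper does not run a Gronwall-type differential inequality at all. It iterates the Duhamel formula completely, writing $G_t^{(n)}=\sum_{k=0}^n V_{k,n}(t)G_0^{(n-k)}$ with explicit time-ordered operators $V_{k,n}(t)$, bounds $\|V_{k,n}(t)G^{(n-k)}\|_{X_n}\le (tB)^k\frac{n!}{k!(n-k)!}\frac{(n-1)!}{(n-k-1)!}\|G^{(n-k)}\|_{X_{n-k}}$, and then closes the estimate by an exact summation: exchanging the order of the double series and using the negative binomial identity $(1-x)^{-(m+1)}=\sum_{n\ge0}\binom{n+m}{n}x^n$ at $x=\rho(t,C)Bt$, which makes the $k$-th coefficient equal to $C^k/k!$ with constant exactly $1$, giving \eqref{maincompar} sharply. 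That route trades your analytic cancellation for a combinatorial identity, and it has the advantage of producing the explicit bound \eqref{estforGtn} on each $\|G_t^{(n)}\|_{X_n}$, which the paper reuses later (in the proof of Theorem~\ref{th-cr}).

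The gap in your write-up is that the central estimate is asserted rather than proved. You correctly flag that one cannot differentiate $u(t)=\sum_n\frac{\rho(t,C)^n}{n!}\|G_t^{(n)}\|_{X_n}$ term by term without knowing the series converges (which is essentially the conclusion), you then switch to an integral-inequality version, and that version visibly stalls: with the weight $\rho(t,C)$ frozen at time $t$ on both sides, the term $\sum_n\frac{\rho(t,C)^n}{n!}n(n-1)b_{n-1}(s)$ under the time integral is not dominated by $\mathrm{const}\cdot\sum_n\frac{\rho(s,C)^{n-1}}{(n-1)!}b_{n-1}(s)$ with a constant independent of $n$, so the loop does not close as written, and you end by asserting that ``the cancellation is forced and the estimate closes.'' The fix is short but needs to be said: work with the partial sums $u_N(t)=\sum_{n=0}^{N}\frac{\rho(t,C)^n}{n!}b_n(t)$, where each $b_n$ is finite and locally Lipschitz by the recursive construction and satisfies the Dini-derivative bound $D^+b_n(t)\le Bn(n-1)b_{n-1}(t)$ (contraction semigroup plus the $W$-bound). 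In the finite sum the telescoping cancellation is legitimate and leaves only the uncancelled boundary term, $D^+u_N(t)\le -\frac{BN\,\rho(t,C)^{N+1}}{N!}\,b_N(t)\le0$, whence $u_N(t)\le u_N(0)\le\|G_0\|_{\L_C}$ for every $N$, and monotone convergence in $N$ gives both $G_t\in\L_{\rho(t,C)}$ and \eqref{maincompar}. (Alternatively one falls back on the paper's explicit iteration.) Two minor points: in the general setting of the theorem $L_0^{(1)}$ is not assumed to be zero, so $G_t^{(1)}=e^{tL_0^{(1)}}G_0^{(1)}$, not $G_0^{(1)}$; and note that the existence part also requires checking $G_t\in\L_{\rho(t,C)}$, so the norm estimate is not an add-on to existence but part of it.
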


\begin{proof}
Let us rewrite the initial value problem (\ref{Cauchy}) as an
infinite system of differential equations. Namely, for any $n\geq1$,
\begin{equation}\label{dfdsg}
\dfrac{\partial }{\partial t}G_{t}^\n\bigl( x^\n\bigr) =\bigl( L_{0}^\n G_{t}^\n \bigr) \bigl( x^\n\bigr)
+\bigl( W^\n G_{t}^{(n-1)}\bigr) \bigl( x^\n\bigr)
\end{equation}
with  $G_{t}^{\left( 0\right) } =G_{0 }^{\left( 0\right) }$.
This system may be solved recurrently:
for each $n\geq 1$
\begin{align}
G_{t}^{\left( n\right) }\bigl( x^{\left( n\right) }\bigr) &=\left(
e^{tL_{0}^{\left( n\right) }}G_{0}^{\left( n\right) }\right) \bigl(
x^{\left( n\right) }\bigr) \nonumber\\&\quad+\int_{0 }^{t}\left( e^{\left( t-s\right)
L_{0}^{\left( n\right) }}W^{\left( n\right) }G_{s}^{\left( n-1\right)
}\right) \bigl( x^{\left( n\right) }\bigr)ds. \label{exprforsol}
\end{align}
Iterating \eqref{exprforsol},
we obtain
\[
G_t^\n(x^\n)=\sum_{k=0}^n \bigl(V_{k,n}(t)G_0^{(n-k)}\bigr)(x^\n),
\]
where $V_{k,n}(t):X_{n-k}\rightarrow X_n$ is
given by
\begin{align*}
V_{k,n}(t):=&\int_0^t\int_0^{s_1}\dotsm\int_0^{s_{k-1}}
e^{(t-s_1)L_{0}^\n }W^\n e^{(s_1-s_2)L_{0}^{(n-1)}} W^{(n-1)}\ldots
\\&\qquad \times
e^{(s_{k-1}-s_{k})L_{0}^{(n-k+1)}}W^{(n-k+1)}e^{s_kL_{0}^{(n-k)}}ds_k\dotsm
ds_1
\end{align*}
for $2\leq k\leq n-1$, and
\begin{align*}
V_{1,n}(t)G^{(n-1)}&:=\int_0^t e^{(t-s_1)L_{0}^\n }W^\n e^{s_1L_{0}^{(n-1)}}G^{(n-1)}ds_1,\\
V_{0,n}(t)G^\n &:=e^{tL_{0}^\n }G^\n ,\\
V_{n,n}(t)G^\n&:=\chi_{\{n=0\}}G^{(0)}.
\end{align*}
Then, using the conditions on $L_0$ and $W$, for $1\leq k\leq n-1$,
$n\geq2$
\begin{align*}
\bigl\|V_{k,n}(t)G^{(n-k)}\bigr\|_{X_n}&\leq\frac{t^k}{k!}B^k
n(n-1)(n-1)(n-2)\dotsm(n-k+1)(n-k)\\&=(tB)^k\frac{n!}{k!(n-k)!}\frac{(n-1)!}{(n-k-1)!}\bigl\|G^{(n-k)}\bigr\|_{X_{n-k}},
\end{align*}
and $\bigl\|V_{0,n}(t)G^\n \bigr\|_{X_n}\leq\|G^\n\|_{X_n}$ for
$n\geq1$. Therefore, for $n\geq 1$,
\begin{align}
\bigl\|G_t^\n\bigr\|_{X_n}&\leq \sum_{k=0}^{n-1}(tB)^k\frac{n!}{k!(n-k)!}\,\frac{(n-1)!}{(n-k-1)!}\bigl\|G^{(n-k)}\bigr\|_{X_{n-k}}
\nonumber\\
&=\sum_{k=1}^{n}(tB)^{(n-k)}\frac{n!}{(n-k)!k!}\,\frac{(n-1)!}{(k-1)!}\bigl\|G_{0}^{(k)}\bigr\|_{X_{k}}.\label{estforGtn}
\end{align}

Then, for any $q(t)>0$,
\begin{align*}
&\bigl\Vert G_{t}\bigr\Vert _{\L_{Cq(t)}} \\
&=\bigl\vert G_{t}^{\left( 0\right) }\bigr\vert +\sum_{n=1}^{\infty }\frac{
C^{n}q^n(t)}{n!}\,\bigl\Vert G_{t}^{\left( n\right) }\bigr\Vert _{X_{n}} \\
&\leq \bigl\vert G_{0}^{\left( 0\right) }\bigr\vert +\sum_{n=1}^{\infty }
\frac{C^{n}q^n(t)}{n!}\sum_{k=1}^{n}\bigl\Vert G_{0}^{\left(
k\right)
}\bigr\Vert _{X_{k}}(tB)^{n-k}\frac{n!}{k!\left( n-k\right) !}\,\frac{
\left( n-1\right) !}{\left( k-1\right) !} \\
&=\bigl\vert G_{0}^{\left( 0\right) }\bigr\vert +\sum_{k=1}^{\infty
}\bigl\Vert G_{0}^{\left( k\right) }\bigr\Vert
_{X_{k}}\frac{1}{k!}\sum_{n=k}^{\infty
}C^{n}q^n(t)(tB)^{n-k}\frac{1}{\left( n-k\right)
!}\,\frac{\left(
n-1\right) !}{\left( k-1\right) !} \\
&=\bigl\vert G_{0}^{\left( 0\right) }\bigr\vert +\sum_{k=1}^{\infty
}\bigl\Vert G_{0}^{\left( k\right) }\bigr\Vert _{X_{k}}\frac{C^{k}q^{k}(t)}{k!}\sum_{n=0}^{\infty }q^n(t)(tBC)^{n}\frac{\left( n+k-1\right) !}{
n!\left( k-1\right) !}\,.
\end{align*}
Now, let $q(t)=\dfrac{1}{1+BCt}$. For any $x\in[0,1)$ and  $m\in\N$,
\[
\Bigl(\frac{1}{1-x}\Bigr)^{m+1}=\sum_{n=0}^\infty x^n\,\frac{\left(
n+m\right) !}{n!m !}.
\]
Applying this equality to $x=q(t)BCt<1$ and $m=k-1$, we obtain
\[
q^{k}(t)\sum_{n=0}^{\infty }q^n(t)(tBC)^{n}\frac{\left( n+k-1\right) !}{
n!\left( k-1\right) !}= \biggl(\frac{q(t)}{1-q(t)BCt}\biggr)^k=1.
\]
Therefore,
\[
\bigl\Vert G_{t}\bigr\Vert _{\L_{Cq(t)}}\leq \bigl\vert G_{0}^{\left( 0\right) }\bigr\vert
+\sum_{k=1}^{\infty
}\bigl\Vert G_{0}^{\left( k\right) }\bigr\Vert _{X_{k}}\frac{C^{k}}{
k!}= \|G_0\|_{\L_C},
\]
which proves the  statement.
\end{proof}

In fact, we have a linear evolution operator
$$V(t):\L_C\rightarrow\L_{\rho(t,C)},$$
satisfying $G_t=V(t)G_0$ and
\begin{equation}\label{Vtdef}
\|V(t)\|_{\L_C\rightarrow\L_{\rho(t,C)}}\leq 1
\end{equation}

\begin{theorem}\label{th-cr}
Let the conditions of Theorem~\ref{th1} be satisfied. Further suppose
that there exists $A>0$ such that $\|L_0^\n\|_{X_n\rightarrow
X_n}\leq An(n-1)$, $n\geq1$. Let $C_0>0$, $k_0\in\K_{C_0}$,
$T=\frac{1}{BC_0}$. Then for any $t\in(0,T)$, there exists
 $k_t\in\K_{C_t}$ with
\begin{equation}\label{Ct}
C_t=\dfrac{C_0}{1-BC_0t},
\end{equation}
such that, for any $G\in\Bbs$,
\[
\frac{d}{d t}\langle\!\langle G,k_t\rangle\!\rangle=\langle\!\langle
(L_0+W)G,k_t\rangle\!\rangle, \quad t\in(0,T).
\]
Moreover, for any $t\in(0,T)$,
\begin{equation}\label{strongcontr}
\|k_t\|_{\K_{C_t}}\leq\|k_0\|_{\K_{C_0}}.
\end{equation}
\end{theorem}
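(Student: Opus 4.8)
The plan is to obtain $k_t$ as the action of the adjoint evolution semigroup on $k_0$, using the duality between $\L_C$ and $\K_C$ given by \eqref{dual-sm}. First I would consider the bounded linear evolution operator $V(t):\L_C\to\L_{\rho(t,C)}$ from Theorem~\ref{th1}, and observe that for fixed $t\in(0,T)$ and $C_0>0$, if we take $C$ such that $\rho(t,C)=C_0$ — that is, $C=C_t:=C_0/(1-BC_0t)$, which is finite precisely because $t<T=1/(BC_0)$ — then $V(t):\L_{C_t}\to\L_{C_0}$ with operator norm $\leq 1$ by \eqref{Vtdef}. Dualizing, $V(t)^*:\K_{C_0}\to\K_{C_t}$ is bounded with $\|V(t)^*\|\leq 1$, which immediately gives a candidate $k_t:=V(t)^*k_0\in\K_{C_t}$ satisfying the norm bound \eqref{strongcontr}. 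The pairing identity $\langle\!\langle V(t)G,k_0\rangle\!\rangle=\langle\!\langle G,k_t\rangle\!\rangle$ holds for all $G\in\L_{C_t}$, in particular for all $G\in\Bbs$ (since $\Bbs\subset\L_{C_t}$).

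Next I would establish the differential equation. Fix $G\in\Bbs$. The key point is that $\Bbs$ is invariant, or at least well-behaved, under $\widehat L=L_0+W$: by Proposition~\ref{twodiag}, if $G$ is supported on $\bigsqcup_{n=0}^N\Ga^{(n)}(\La)$ then $L_0G$ is supported on the same set, while $WG$ is supported on $\bigsqcup_{n=0}^{N+1}\Ga^{(n)}(\La)$ (one additional particle can appear, and only in $\La$ since $\tilde c(x_1,x_2,y_1)$ integrated gives no constraint — actually one must check $WG$ remains bounded, which follows from \eqref{estWn} together with boundedness of $\tilde c$ on the relevant compact). Hence $\widehat L G\in\Bbs$ as well. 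The extra hypothesis $\|L_0^\n\|_{X_n\to X_n}\leq An(n-1)$ is used to control $\|\widehat L G\|_{\L_{C_t}}$ and, more importantly, to differentiate $t\mapsto V(t)G$ in $\L_{C_0}$. The plan is to show that $t\mapsto \langle\!\langle G,k_t\rangle\!\rangle=\langle\!\langle V(t)G,k_0\rangle\!\rangle$ is differentiable with derivative $\langle\!\langle \widehat L V(t)G,k_0\rangle\!\rangle$, and then to move $\widehat L$ across the pairing: $\langle\!\langle \widehat L V(t)G,k_0\rangle\!\rangle=\langle\!\langle V(t)\widehat L G,k_0\rangle\!\rangle=\langle\!\langle \widehat L G,k_t\rangle\!\rangle$, where the middle equality is the statement that $V(t)$ commutes with $\widehat L$ on $\Bbs$ — true because $V(t)$ is built from the resolvent/semigroup generated by $L_0+W$ and $\widehat L G\in\Bbs$ lies in the domain.

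The differentiability step is where the finite-$N$ truncation really pays off: because $G\in\Bbs$ has only finitely many nonzero components $G^{(0)},\dots,G^{(N)}$, the solution $G_t=V(t)G$ has, by the recursion \eqref{exprforsol}, only finitely many nonzero components $G_t^{(0)},\dots,G_t^{(N)}$ as well (the upper-diagonal structure means components with index $>N$ stay zero). On this finite system, \eqref{dfdsg} is a genuine finite system of Banach-space ODEs with bounded generators $L_0^{(n)}$ and bounded off-diagonal terms $W^{(n)}$, so $t\mapsto G_t$ is norm-differentiable in each $X_n$, hence in $\L_{C_0}$ after checking the weighted sum converges (here $\|L_0^{(n)}\|\leq An(n-1)$ and $\|W^{(n)}\|\leq Bn(n-1)$ give $\|\partial_t G_t^{(n)}\|_{X_n}\leq (A+B)n(n-1)(\|G_t^{(n)}\|+\|G_t^{(n-1)}\|)$, and the weighted $\ell^1$ sum of these is finite because only finitely many terms are nonzero). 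Then $\frac{d}{dt}\langle\!\langle G,k_t\rangle\!\rangle=\langle\!\langle \partial_t G_t, k_0\rangle\!\rangle=\langle\!\langle (L_0+W)G_t,k_0\rangle\!\rangle$; identifying $(L_0+W)G_t=(L_0+W)V(t)G=V(t)(L_0+W)G$ on the finite system and pairing back gives $\langle\!\langle (L_0+W)G, k_t\rangle\!\rangle$, as required.

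The main obstacle I anticipate is the justification of interchanging $\widehat L$ with $V(t)$ in the pairing, i.e.\ the identity $\langle\!\langle \widehat L V(t)G, k_0\rangle\!\rangle = \langle\!\langle V(t)\widehat L G, k_0\rangle\!\rangle$ for $G\in\Bbs$. On the finite truncated system this is the elementary commutation of a time-autonomous linear flow with its own generator, but one must be careful that applying $\widehat L$ to $V(t)G$ does not leave the space where the pairing with $k_0\in\K_{C_0}$ converges absolutely — this is exactly what the bound $\|L_0^\n\|_{X_n\to X_n}\le An(n-1)$ guarantees, ensuring $\widehat L V(t)G\in\L_{C'}$ for any $C'$ slightly larger than $C_0$, hence still paired legitimately against $k_0\in\K_{C_0}\subset\K_{C'}$. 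Once this is in place, everything else is bookkeeping with the explicit series \eqref{estforGtn} and the weight \eqref{weight}.
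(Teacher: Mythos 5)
Your overall architecture (define $k_t$ as the adjoint action $V(t)^*k_0$ using $\rho(t,C_t)=C_0$, get \eqref{strongcontr} from \eqref{Vtdef}, then differentiate the pairing and commute $L_0+W$ with $V(t)$) is the same as the paper's, but there is a genuine error at the step you rely on most heavily: the claim that for $G\in\Bbs$ with components $G^{(0)},\dots,G^{(N)}$ the solution $G_t=V(t)G$ again has only the components $G_t^{(0)},\dots,G_t^{(N)}$, so that \eqref{dfdsg} reduces to a finite system. This is false, and in fact contradicts your own (correct) earlier remark that $W$ adds a particle: by \eqref{Wn}, $W^{(n)}$ maps $X_{n-1}$ into $X_n$, i.e.\ the off-diagonal term feeds component $n-1$ into component $n$, so the Duhamel iteration \eqref{exprforsol} gives $G_t^{(n)}=\sum_{k=0}^n V_{k,n}(t)G_0^{(n-k)}$, which is generically nonzero for \emph{every} $n\geq N$ once $t>0$. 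The correct consequence of the triangular structure is the opposite one: each fixed component $G_t^{(n)}$ depends only on $G_0^{(0)},\dots,G_0^{(n)}$, so the first $N+1$ components form a closed subsystem, but the solution itself has infinitely many nonzero components. Because of this, your justification that the weighted $\ell^1$ sum of $\|\partial_tG_t^{(n)}\|_{X_n}$ converges ``because only finitely many terms are nonzero'' collapses, and with it the differentiability of $t\mapsto\langle\!\langle G,k_t\rangle\!\rangle$ and the identification of its derivative; likewise the commutation $(L_0+W)V(t)G=V(t)(L_0+W)G$ is not ``elementary commutation on a finite system.'' The repair is exactly what the paper does: estimate the series of derivatives honestly, using $\|L_0^{(n)}\|\leq An(n-1)$, $\|W^{(n)}\|\leq Bn(n-1)$, the explicit bound \eqref{estforGtn}, the finiteness of the nonzero components of $G_0$ (not of $G_t$), and the condition $C_0T'B<1$ (cf.\ \eqref{ex4}); and prove the commutation componentwise by comparing, for each $N$, with the truncated operator $\mathbb{L}_N=\mathbb{P}_N(L_0+W)\mathbb{I}_N$ on $\bigoplus_{n=0}^N X_n$, whose semigroup reproduces the components $\leq N$ of $V(t)G_0$ when $G_0$ has components only up to $N$, and which commutes with its own semigroup because it is bounded.

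A secondary inaccuracy: your claim that $\widehat LG\in\Bbs$ for $G\in\Bbs$ is not justified (and is generally false). The gain parts of $L_0G$ and $WG$ need not vanish on configurations having one or two points far outside the set $\La$ supporting $G$, since only the arrival points are constrained by the support of $G$ while the departure points range over all of $\R^d$; so $\widehat L G$ need not have bounded support. Fortunately this invariance is not needed anywhere: all that is required is $(L_0+W)G_0\in\L_{C_t}$ and $(L_0+W)G_t\in\L_{C_0}$, which follow from the operator norm bounds, as in the paper.
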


\begin{proof}
Let $t\in(0,\,T)$ be arbitrary. The function
$f(x)=\rho(t,x)=\frac{x}{1+xBt}$, $x\geq0$, increases to
$\frac{1}{tB}$ as $x\rightarrow +\infty$. Since $C_0<\frac{1}{tB}$,
there exists a unique solution to $f(x)=C_0$, namely, $x=C_t$, given
by \eqref{Ct}. Take any $G_0\in\L_{C_t}$. By Theorem~\ref{th1},
there exists an evolution $G_0\mapsto G_\tau$ for any $\tau>0$ such
that $G_\tau\in \L_{\rho(\tau,C_t)}$. Consider this evolution at the
moment $\tau=t$. Since
\begin{equation}\label{rho_t}
\rho(t,C_t)=\frac{C_t}{1+BC_tt}=C_0,
\end{equation}
we have $G_t\in\L_{C_0}$. Therefore, $\langle\!\langle G_t,
k_0\rangle\!\rangle$ is well-defined. Moreover, by
\eqref{maincompar},
\begin{align}\nonumber
\bigl|\langle\!\langle G_t, k_0\rangle\!\rangle\bigr|&\leq
\|G_t\|_{\L_{C_0}}\|k_0\|_{\K_{C_0}}=
\|G_t\|_{\L_{\rho(t,C_t)}}\|k_0\|_{\K_{C_0}}\\&\leq
\|G_0\|_{\L_{C_{t}}}\|k_0\|_{\K_{C_0}}.\label{bd}
\end{align}
Therefore, the mapping $G_0\mapsto\langle\!\langle G_t,
k_0\rangle\!\rangle$ is a linear continuous functional on the space
$\L_{C_{t}}$. Hence, there exists $k_t\in\K_{C_{t}}$ such that, for
any $G_0\in\L_{C_t}$,
\begin{equation}\label{wq23}
\langle\!\langle G_0, k_t\rangle\!\rangle=\langle\!\langle G_t,
k_0\rangle\!\rangle=\langle\!\langle V(t)G_0, k_0\rangle\!\rangle.
\end{equation}
We note  that $k_t$ depends on $k_0$ and does not depend on $G_0$.
Further, \eqref{bd} implies \eqref{strongcontr}.

Let now $G_0\in \Bbs$. Consider a function $g=g_{G_0,k_0}:
  [0,T)\rightarrow
  \R$, $g(t):=\langle\!\langle G_t, k_0\rangle\!\rangle=\langle\!\langle G_0, k_t\rangle\!\rangle$. We have
\begin{equation}\label{ser1}
g(t)=\langle\!\langle G_t, k_0\rangle\!\rangle=
\sum_{n=0}^\infty\frac{1}{n!}\int_{(\X)^n}G_t^\n\bigl(x^\n\bigr)k_0^\n\bigl(x^\n\bigr)dx^\n.
\end{equation}
By \eqref{bd}, for any $[0,T']\subset[0,T)$,
\[
\bigl|\langle\!\langle G_t, k_0\rangle\!\rangle\bigr| \leq
\|G_0\|_{\L_{C_{T'}}}\|k_0\|_{\K_{C_0}}, \quad t\in[0,T'].
\]
Hence, the series \eqref{ser1} converges on $[0,T']$. Using  the
well-known representation
\begin{equation}\label{represent}
e^{tL_0^\n}G_0=G_0+\int_0^t e^{sL_0^\n}L_0^\n G_0ds
\end{equation}
(see e.g.\ \cite[Lemma 1.3 (iv)]{EN2000}), we derive from
\eqref{exprforsol} and Fubini's theorem:
\begin{align}
g_n(t):&=\int_{(\X)^n}G_t^\n\bigl(x^\n\bigr)k_0^\n\bigl(x^\n\bigr)dx^\n\nonumber\\
&=\int_{(\X)^n}\Bigl(e^{tL_0^\n}G_0^\n\Bigr)\bigl(x^\n\bigr)k_0^\n\bigl(x^\n\bigr)dx^\n\nonumber\\
&\quad+\int_0^t\int_{(\X)^n}\Bigl(e^{(t-s)L_0^\n}W^\n
G_s^{(n-1)}\Bigr)\bigl(x^\n\bigr)k_0^\n\bigl(x^\n\bigr)dx^\n
ds\nonumber\\&=\int_{(\X)^n}G_0^\n\bigl(x^\n\bigr)k_0^\n\bigl(x^\n\bigr)dx^\n\nonumber\\&\quad+
\int_0^t\int_{(\X)^n}\Bigl(e^{sL_0^\n}L_0^\n G_0^\n\Bigr)\bigl(x^\n\bigr)k_0^\n\bigl(x^\n\bigr)dx^\n ds\nonumber\\
&\quad+\int_0^t\int_{(\X)^n}\Bigl(e^{(t-s)L_0^\n}W^\n
G_s^{(n-1)}\Bigr)\bigl(x^\n\bigr)k_0^\n\bigl(x^\n\bigr)dx^\n
ds.\label{gnt}
\end{align}
Since $L_0^\n:X_n\rightarrow X_n$ and $W^\n:X_{n-1}\rightarrow X_n$
are bounded, the functions inside the time integrals are continuous in
$s$. Therefore, by \eqref{gnt} and \eqref{exprforsol}, $g_n(t)$~is
differentiable on $(0,T)$ and
\begin{align}
g_n'(t)&=\int_{(\X)^n}\Bigl(L_0^\n e^{tL_0^\n}
G_0^\n\Bigr)\bigl(x^\n\bigr)k_0^\n\bigl(x^\n\bigr)dx^\n\nonumber\\&\quad+\int_0^t\int_{(\X)^n}\Bigl(L_0^\n
e^{(t-s)L_0^\n}W^\n
G_s^{(n-1)}\Bigr)\bigl(x^\n\bigr)k_0^\n\bigl(x^\n\bigr)dx^\n
ds\nonumber\\
&\quad+\int_{(\X)^n}\Bigl(W^\n
G_t^{(n-1)}\Bigr)\bigl(x^\n\bigr)k_0^\n\bigl(x^\n\bigr)dx^\n
ds\nonumber\\&=\int_{(\X)^n}\bigl(L_0^\n G_t^\n
\bigr)\bigl(x^\n\bigr)k_0^\n\bigl(x^\n\bigr)dx^\n
ds\nonumber\\
&\quad+\int_{(\X)^n}\Bigl(W^\n
G_t^{(n-1)}\Bigr)\bigl(x^\n\bigr)k_0^\n\bigl(x^\n\bigr)dx^\n
ds.\label{ex2}
\end{align}
Hence, for any $n\geq2$,
\[
|g'_n(t)|\leq \|k_0\|_{\K_{C_0}}C_0^nn(n-1)\bigl(A\|G_t\|_{X_n}+B\|G_t\|_{X_{n-1}}\bigr).
\]
Analogously to the proof of Theorem~\ref{th1}, we obtain, for all
$t\in[0,T']\subset[0,T)$,
\begin{align}
&\quad\sum_{n=1}^\infty\frac{1}{n!}|g'_n(t)|\nonumber\\&\leq\mathrm{const}\cdot
\sum_{n=1}^\infty\frac{1}{n!}C_0^nn(n-1)\sum_{k=1}^{n}(tB)^{(n-k)}\frac{n!}{(n-k)!k!}\frac{(n-1)!}{(k-1)!}\bigl\|G_{0}^{(k)}\bigr\|_{X_{k}}\nonumber\\
&\leq \mathrm{const}\cdot
\sum_{k=1}^\infty\frac{1}{k!}\bigl\|G_{0}^{(k)}\bigr\|_{X_{k}}\sum_{n=k}^{\infty}C_0^n(T'B)^{(n-k)}\frac{n(n-1)}{(n-k)!}\frac{(n-1)!}{(k-1)!}\nonumber\\&=
\mathrm{const}\cdot
\sum_{k=1}^\infty\frac{C_0^k}{k!}\bigl\|G_{0}^{(k)}\bigr\|_{X_{k}}
\sum_{n=0}^{\infty}C_0^n(T'B)^{n}\frac{(n+k-1)}{n!}\frac{(n+k)!}{(k-1)!}<\infty,
\label{ex4}
\end{align}
since $G_0\in\Bbs$ (and so  there exists $K\in\N$ such that $G_0^{(k)}=0$
for all $k\geq K$) and the inner series converges
as $C_0 T'B<1$.

Hence, $g(t)$ is differentiable on any $[0,T']\subset[0,T)$. Next,
\eqref{ser1}, \eqref{ex2}, and \eqref{ex4} imply that
\begin{equation}\label{eq123}
g'(t)=\frac{d}{d t}\left\langle \!\left\langle
G_t,\,k_0\right\rangle \!\right\rangle=\langle \!\langle
(L_0+W)G_{t},\,k_0\rangle\!\rangle
\end{equation}
and, moreover, $(L_0+W)G_{t}\in\L_{C_0}$. Therefore, using
\eqref{wq23}, \eqref{eq123} and the obvious inclusion
$(L_0+W)G_0\in\L_{C_t}$, we obtain
\begin{align*}
\frac{d }{d t}\left\langle \!\left\langle G_0,\,k_t\right\rangle
\!\right\rangle&=\frac{d }{d t}\left\langle \!\left\langle
G_t,\,k_0\right\rangle \!\right\rangle=\langle \!\langle
(L_0+W)G_{t},\,k_0\rangle\!\rangle=\langle \!\langle
(L_0+W)V(t)G_{0},\,k_0\rangle\!\rangle\\&=\langle \!\langle
V(t)(L_0+W)G_{0},\,k_0\rangle\!\rangle=\langle
\!\langle(L_0+W)G_{0},\,k_t\rangle\!\rangle,
\end{align*}
provided
\begin{equation}\label{eqnMMM}
(L_0+W)V(t)G_{0}= V(t)(L_0+W)G_{0}.
\end{equation}

To prove \eqref{eqnMMM}, we consider, for each $N\in\mathbb N$, the
space $\mathbb{X}_N:=\bigoplus\limits_{n=0}^N X_n$ with the norm
$\|\cdot\|_{\mathbb{X}_N}:=\sum\limits_{n=0}^N\|\cdot\|_{{X_n}}$.
For any $\mathbb{G}\in\mathbb{X}_N$,
$\mathbb{G}=(\mathbb{G}^{(0)},\ldots,\mathbb{G}^{(N)})$, we define
the following function on $\Ga_0$:
$$\mathbb{I}_N\mathbb{G}:=(\mathbb{G}^{(0)},\ldots,\mathbb{G}^{(N)},0,0,\ldots).$$
For any function $G$ on $\Ga_0$, $G=(G^{(0)},\ldots, G^\n,\ldots)$,
with $G^\n\in X_n$, we define the following element of
$\mathbb{X}_N$:
$$\mathbb{P}_NG:=(G^{(0)},\ldots, G^\n).$$ The~system of differential
equations \eqref{dfdsg} for $1\leq n\leq N$ can be considered as one
equation $\frac{\partial }{\partial t}\mathbb{G}_t=\mathbb{L}_N
\mathbb{G}_t$ in $\mathbb{X}_N$ with
\[
\mathbb{L}_N:=\mathbb{P}_N(L_0+W)\mathbb{I}_N.
\]
Clearly, $\mathbb{L}_N$ is a bounded operator in $\mathbb{X}_N$.
Hence, there exists a unique vector-valued solution of this
equation, $\mathbb{G}_t=e^{t\mathbb{L}_N}\mathbb{G}_0$. The 
 $n$-th
component   of $\mathbb{G}_t$, i.e., $\mathbb{G}_t^{(n)}$,
 coincides with the $G_t^{(n)}$
obtained in Theorem~\ref{th1}, for each $0\leq n\leq N$, where
$G_0=\mathbb{I}_N\mathbb{G}_0$. More precisely, for $0\leq n\leq N$,
\begin{equation}\label{wewq}
(V(t)G_0)^\n=(\mathbb{I}_Ne^{t\mathbb{L}_N
}\mathbb{G}_0)^\n=(e^{t\mathbb{L}_N
}\mathbb{G}_0)^\n=(e^{t\mathbb{L}_N }\mathbb{P}_NG_0)^\n.
\end{equation}
It is well known that a bounded operator $\mathbb{L}_N$ commutes
with its semigroup $e^{t\mathbb{L}_N }$. Note also that, for $0\leq
n\leq N$, $ G^\n=(\mathbb{P}_NG)^\n$. Therefore, for all $N\geq1$,
$0\leq n\leq N$, and for $G_{0}=\mathbb{I}_N\mathbb{G}_0$, we obtain
\begin{align*}
((L_0+W)V(t)G_{0})^{(n)} &=(\mathbb{P}_N(L_0+W)V(t)G_{0})^{(n)}
=(\mathbb{L}_N e^{t\mathbb{L}_N }\mathbb{G}_0)^{(n)}\\&=
(e^{t\mathbb{L}_N }\mathbb{L}_N \mathbb{G}_0)^{(n)} =
(e^{t\mathbb{L}_N }\mathbb{P}_N(L_0+W)\mathbb{I}_N
\mathbb{G}_0)^{(n)} \\&=(V(t)(L_0+W)G_{0})^{(n)},
\end{align*}
where in the last equality we applied \eqref{wewq} for
$(L_0+W)G_{0}$ instead of $G_0$. Hence, \eqref{eqnMMM} holds. 
\end{proof}

\begin{remark}
Note that  the initial value problem
$\dfrac{\partial}{\partial t}k_t=\widehat{L}^*k_t$,
$k_t\bigr|_{t=t_1}=k_{t_1}\in\K_{C_{t_1}}$ for some
$t_1<T=\dfrac{1}{BC_0}$,   has a solution only on the time
interval $[t_1,t_1+T_1)=[t_1,T)$, since
$T_1=\dfrac{1}{BC_{t_1}}=\dfrac{1-BC_0t_1}{BC_{0}}=T-t_1$.
\end{remark}

\begin{remark} Using an estimate analogous to  \eqref{ex4}, one  can
show that $\dfrac{\partial{G_t}}{\partial t}\in\L_{\rho(t,C)}$ if
$G_0$ belongs to $\Bbs$ (or even to a larger subset of $\L_C$).
\end{remark}

Thus, by Theorems \ref{th1} and \ref{th-cr}, under conditions
\eqref{c-1}--\eqref{c-4}, \eqref{dominate} for the binary jumps
dynamics with generator \eqref{tjgen-intro} we have the evolution of
quasi-observables and the corresponding dual one. We will now show
that the latter evolution generates an evolution of probability
measures on $\Ga$.

\begin{theorem}\label{th-ident}
Let \eqref{c-1}--\eqref{c-4} and  \eqref{dominate} hold. Fix a measure
$\mu\in\mathcal{M}^1_\mathrm{fm}(\Ga)$ which has a correlation
functional $k_\mu\in\K_{C_0}$, $C_0>0$. Consider the evolution
$k_\mu\mapsto k_t\in\K_{C_t}$, $t\in(0,T)$, where
$T=1/(c_3+c_4)C_0$. Then, for any $t\in(0,T)$, there exists a unique
measure $\mu_t\in\mathcal{M}^1_\mathrm{fm}(\Ga)$ such that $k_t$ is
the  correlation functional of $\mu_t$.
\end{theorem}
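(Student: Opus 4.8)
\emph{Overview.} The plan is to treat uniqueness and existence of $\mu_t$ separately; the substantial part is existence, and the strategy is to realise $k_t$ as a limit of correlation functionals of genuine probability measures produced by a finite‑particle version of \eqref{tjgen-intro}. Uniqueness is comparatively soft: since $k_t\in\K_{C_t}$, it obeys the Ruelle‑type bound $|k_t(\eta)|\le\|k_t\|_{\K_{C_t}}C_t^{|\eta|}$, and in this regime the moment problem on $\Ga$ is determinate, so a measure in $\mathcal M_{\mathrm{fm}}^1(\Ga)$ is uniquely determined by its correlation functional (see, e.g., \cite{Len1975a,KK2002}). It then remains, by Lenard's reconstruction theorem, to check that $k_t$ satisfies Lenard's positivity condition, that $k_t^{(0)}=1$, and that $k_t\,d\la$ is a locally finite measure on $\Ga_0$; the last is immediate from $k_t\in\K_{C_t}$.

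\emph{Finite‑particle dynamics.} The key observation is that, although \eqref{tjgen-intro} need not define a process on $\Ga$, it does on $\Ga_0$: binary jumps conserve the particle number, the restriction of $L$ to $\Ga_0$ is the operator $L_0$ of \eqref{L0}, and by \eqref{c-1} each $L_0^{(n)}$ is a \emph{bounded} Markov generator on $L^{\infty}((\X)^n)$. Hence $P_t:=\bigoplus_n e^{tL_0^{(n)}}$ is a Markov transition function on $\Ga_0$. For $\La\in\Bb$, put $\mu^\La:=\mu\circ p_\La^{-1}$; this is a probability measure supported on finite configurations, with correlation functional $k_{\mu^\La}=k_\mu\mathbf 1_{\Ga_0(\La)}\in\K_{C_0}$ and $\|k_{\mu^\La}\|_{\K_{C_0}}\le\|k_\mu\|_{\K_{C_0}}$, and we set $\mu_t^\La:=\mu^\La P_t$. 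Since $P_t$ preserves the particle number, $\mu_t^\La$ is again a probability measure, still in $\mathcal M_{\mathrm{fm}}^1(\Ga)$, hence has a correlation measure $\rho_{\mu_t^\La}$. I claim $\rho_{\mu_t^\La}=(V(t)^{*}k_{\mu^\La})\,d\la$, with $V(t),V(t)^{*}$ as in Theorems~\ref{th1} and \ref{th-cr}. This is because the truncated bounded‑operator equations $\partial_t\mathbb G_t=\mathbb L_N\mathbb G_t$ from the proof of Theorem~\ref{th-cr} (cf.\ \eqref{wewq}) identify $V(t)$, level by level on $\Ga_0$, with $K^{-1}P_tK$; combining this with the $K$‑transform identity \eqref{Ktransform} applied to $\mu^\La$ and to $\mu_t^\La$, and with the definition \eqref{wq23} of $V(t)^{*}$, we obtain for every $G\in\Bbs$
\[
\ll G,\,\rho_{\mu_t^\La}\rr=\langle KG,\mu_t^\La\rangle=\langle P_t(KG),\mu^\La\rangle=\ll V(t)G,\,k_{\mu^\La}\rr=\ll G,\,V(t)^{*}k_{\mu^\La}\rr,
\]
which proves the claim (and, incidentally, $\rho_{\mu_t^\La}\ll\la$). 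By \eqref{strongcontr}, $\|V(t)^{*}k_{\mu^\La}\|_{\K_{C_t}}\le\|k_\mu\|_{\K_{C_0}}$ uniformly in $\La$.

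\emph{Limit and positivity.} As $\La\uparrow\X$ we have $k_{\mu^\La}=k_\mu\mathbf 1_{\Ga_0(\La)}\to k_\mu$ pointwise, dominated by $\|k_\mu\|_{\K_{C_0}}C_0^{|\cdot|}$; since $V(t)G\in\L_{\rho(t,C_t)}=\L_{C_0}$ for $G\in\L_{C_t}$ by Theorem~\ref{th1} and \eqref{rho_t}, dominated convergence gives
\[
\ll G,\,V(t)^{*}k_{\mu^\La}\rr=\ll V(t)G,\,k_{\mu^\La}\rr\ \longrightarrow\ \ll V(t)G,\,k_\mu\rr=\ll G,\,k_t\rr
\]
for all $G\in\L_{C_t}$, in particular for $G\in\Bbs$. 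Hence, whenever $G\in\Bbs$ satisfies $(KG)(\ga)\ge0$ for all $\ga\in\Ga$,
\[
\ll G,\,k_t\rr=\lim_{\La\uparrow\X}\langle KG,\mu_t^\La\rangle\ge0,
\]
so $k_t$ satisfies Lenard's positivity condition; moreover $k_t^{(0)}=1$ (either directly, since the zeroth component of $V(t)$ is the identity, or by passing to the limit in $k_{\mu_t^\La}^{(0)}=1$), and $k_t\,d\la$ is locally finite. Lenard's reconstruction theorem (\cite{Len1975a,KK2002}) then yields a probability measure $\mu_t$ on $\Ga$ with correlation measure $k_t\,d\la$; the Ruelle bound forces finite local moments of all orders, so $\mu_t\in\mathcal M_{\mathrm{fm}}^1(\Ga)$, and uniqueness was already established.

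\emph{Main obstacle.} The delicate step is the identity $\rho_{\mu_t^\La}=(V(t)^{*}k_{\mu^\La})\,d\la$: one must make sure that the abstract dual evolution of Theorem~\ref{th-cr} really computes the correlation functions of the honest finite‑particle Markov dynamics. This reduces to (i) extracting from the finite‑dimensional bounded equations $\partial_t\mathbb G_t=\mathbb L_N\mathbb G_t$ the level‑wise identification $V(t)=K^{-1}P_tK$ on $\Ga_0$ — which uses that $\mathbb L_N$ is the conjugation of the particle‑number‑preserving operator $\bigoplus_{n\le N}L_0^{(n)}$ by the ``lower‑triangular'' operator $K$ truncated to at most $N$ particles — and (ii) the absolute convergence of the series \eqref{KT3.15} defining $K(\cdot)$ along the evolution, needed so that \eqref{Ktransform} may be applied to $\mu^\La$ and $\mu_t^\La$; both rest on the finite‑local‑moment estimate, which survives because $P_t$ conserves the particle number. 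Granted this identity, the rest is dominated convergence together with the classical characterisation of correlation functions.
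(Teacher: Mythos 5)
Your proposal is correct and follows the same overall strategy as the paper: reduce the claim to Lenard positive definiteness of $k_t$, obtain it by approximating $\mu$ by finite systems evolved under the honest finite-particle dynamics generated by the operators $L_0^{(n)}$, identify the resulting pairings with $\ll V(t)G,\,\cdot\,\rr$ through the $K$-transform, and pass to the limit using the dual evolution of Theorem~\ref{th-cr} and Lenard's reconstruction. The implementation, however, is genuinely streamlined. The paper works with $L^1$-densities: it cuts $\mu^{\La_0}$ off in the particle number (the normalized $R_{0,N}$ of \eqref{expr1.5}), evolves the densities by the adjoint semigroups $e^{t(L_0^{(n)})^*}$, proves separately (via Lie--Trotter) that these preserve positivity and mass, and then takes two limits, $N\to\infty$ and $\La\nearrow\X$. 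You evolve the projected measure $\mu^\La$ directly by the Markov semigroup $P_t$, so positivity and normalization are automatic because under \eqref{c-1} each $L_0^{(n)}$ is a bounded jump-type Markov generator, and only the single limit $\La\nearrow\X$ remains; the price is that the sectorwise duality $\langle KG,\mu^\La P_t\rangle=\langle P_t(KG),\mu^\La\rangle$ and the use of \eqref{Ktransform} for $\mu^\La$ and $\mu^\La P_t$ must be justified by number conservation together with the finite-local-moment bound $\int_\Ga|\ga_\La|^M\,d\mu<\infty$, which you correctly flag and which does hold. The step you call the main obstacle, $K_0^{-1}U(t)K_0G_0=V(t)G_0$ for $G_0\in\Bbs$, is precisely the paper's identity \eqref{descsg} (also treated there briefly, as a consequence of Proposition~\ref{twodiag} and the triangular structure); in writing it out you should only note that the conjugated semigroup acts on the polynomially bounded components of $K_0G_0$ rather than on $X_n$, so the uniqueness argument for the truncated systems must be run sectorwise on spaces where all operators involved are bounded. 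With these justifications supplied, your argument yields the theorem.
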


\begin{proof}
We first recall the following definition. Let a measurable, non-negative function
$k$ on $\Ga_0$ be such that $\int_M k(\eta)\, d\la(\eta)<\infty$  for any bounded $M\in {\B }(\Ga_0)$. The function $k$
is said to be \textit{Lenard positive definite} if $\ll G,k\rr\geq0$
for any $G\in\Bbs$ such that $KG\geq0$. It was
shown in \cite{Len1975a} that any such $k$ is the correlation
functional of some probability measure on $\Ga$. If,
additionally, $k\in\K_C$ for some $C>0$, then
this measure is uniquely defined (cf.\ \cite{Len1973}) and
belongs to $\mathcal{M}_{\mathrm{fm} }^1(\Ga )$
(cf.\ \cite{KK2002}). Therefore, to prove the theorem, it
is enough to show that $k_t$ is Lenard positive definite for any
$t\in(0,T)$.

Since the measure $\mu \in \mathcal{M}_{\mathrm{fm} }^1(\Ga )$ has
the correlation functional $k_\mu$, $\mu$ is locally absolutely
continuous with respect to $\pi$, and for any $\La\in\Bbs$ and
$\la$-a.a $\eta\in\Ga(\La)$
\begin{equation}\label{inv}
\dfrac{d\mu^{\La}}{d\la}(\eta)=\int_{\Ga(\La)}(-1)^{|\xi|}k_\mu(\eta\cup\xi)d\la(\xi),
\end{equation}
see \cite[Proposition 4.3]{KK2002}. Since $k_\mu\in\K_{C_0}$, we
have, by \eqref{inv}, \eqref{ineq5}, and \eqref{LP-meas-def},
\begin{equation}\label{est21}
\frac{d\mu^{\La}}{d\la}(\eta)\leq
\|k_\mu\|_{\K_{C_0}}e^{C_0m(\La)}C_0^{|\eta|}
\end{equation}
for $\la$-a.a $\eta\in\Ga(\La)$.

We fix $\La_0\in\Bb$ and consider the projection
$\mu_0:=\mu^{\La_0}$ on $\Ga(\La_0)$. 
 By \eqref{est21}, for
$\la$-a.a. $\eta\in\Ga(\La_0)$
\begin{equation}\label{expr1}
R_0(\eta):=\dfrac{d\mu_0}{d\la}(\eta)\leq
A_0C_0^{|\eta|},
\end{equation}
where $A_0:=\|k_\mu\|_{\K_{C_0}}e^{C_0m(\La_0)}$. 
Clearly, $\mu_0$ may be
considered as a measure on the whole of $\Ga$ if we set $R_0$ 
to be equal to $0$ outside of  $\Ga(\La_0)$. Hence,
$$
\mu_{0}(A)=\int_{\Ga(\La_{0})\cap A}R_{0}({\eta})d\la(\eta),\quad A\in \mathcal{B}(\Ga).
$$
On the other hand, $R_{0}$ being extended by zero outside of $\Ga(\La_0)$ can also be regarded as a $\mathcal{B}(\Ga_{0})$-measurable function. Evidently, that in this case $0 \leq R_0\in L^1(\Ga_0,d\la)$ with $\int_{\Ga_0}R_0d\la=1$. Note that
\begin{equation}\label{kmu0}
k_0:=1\!\!1_{\Ga(\La_0)}k_\mu \in\K_{C_0}
\end{equation}
 is the correlation functional of
$\mu_0$. Here and below, $1\!\!1_\Delta$ stands for the indicator
function of a set $\Delta$. By
 \cite[Proposition~4.2]{KK2002},  for $\la$-a.a. $\eta\in\Ga(\La_0)$
 \begin{equation}\label{expr2}
 k_{0}(\eta)=\int_{\Ga(\La_0)} R_{0}(\eta\cup\xi)d\la(\xi).
 \end{equation}

There exists an $N_0\in\N$ such that
$\int_{(\X)^{N_0}}R_0^{(N_0)}dx^{(N_0)}>0$ (otherwise $R_0=0$
$\la$-a.e.). We set
\[
r:=\int_{\bigsqcup_{n=0}^{N_0} \Ga^\n}R_0(\eta)d\la(\eta)\in(0,1].
\]
For each $N\geq N_0$, we define
\begin{equation}\label{expr1.5}
R_{0,N}(\eta)=1\!\!1_{\{|\eta|\leq N\}}(\eta) R_0(\eta)\biggl(
\int_{\bigsqcup_{n=0}^N \Ga^\n}R_0(\eta)d\la(\eta)\biggr)^{-1}.
\end{equation}
Then, clearly, $0\leq R_{0,N}\in L^1(\Ga_0,d\la)$, with
$\int_{\Ga_0}R_{0,N}\,d\la=1$.  Moreover, $R_{0,N}$ has a bounded
support on $\Ga_0$. By \eqref{expr1.5} and \eqref{expr1} we have
\begin{equation}\label{expr22}
R_{0,N}(\eta)\leq r^{-1} R_0(\eta)\leq r^{-1}A_0C_0^{|\eta|}
\end{equation}
for $\la$-a.a. $\eta\in\Ga_0$.

We define a probability measure
$\mu_{0,N}\in\mathcal{M}^1_\mathrm{fm}(\Ga)$, concentrated on
$\Ga_0$, by $d\mu_{0,N}=R_{0,N}d\la$. By
\cite[Proposition~4.2]{KK2002}, the correlation functional $k_{0,N}$
of $\mu_{0,N}$ has the following representation
\begin{equation}\label{expr3}
k_{0,N}(\eta)=\int_{\Ga(\La_0)} R_{0,N}(\eta\cup\xi)d\la(\xi)
\end{equation}
for $\la$-a.a. $\eta\in\Ga(\La_0)$. It is evident now that $k_{0,N}$
has a bounded support on $\Ga_0$. Moreover, by \eqref{expr2},
\eqref{expr3}, and the first inequality in \eqref{expr22}, we get
$k_{0,N}\leq\frac{1}{r}k_0\in\K_{C_0}$ and
\begin{equation}\label{normestN}
\|k_{0,N}\|_{\K_{C_0}}\leq \frac{1}{r}\,\|k_{0}\|_{\K_{C_0}}.
\end{equation}
By the definition of a correlation functional, for any $G\in\Bbs$
\begin{align}
\bigl| \ll G,k_0\rr-\ll G,k_{0,N}\rr\bigr|=&\,
\biggl|\int_{\Ga_0}(KG)(\eta)\bigl(R_0(\eta)-R_{0,N}(\eta)\bigr)d\la(\eta)\biggr|
\nonumber\\\leq&\,D\int_{\Ga(\La_0)}(1+|\eta|)^M\bigl|R_0(\eta)-R_{0,N}(\eta)\bigr|d\la(\eta)\label{expr4}
\end{align}
for some $D=D(G)>0$ and $M=M(G)\in\N$ (see
\cite[Proposition~3.1]{KK2002}). By \eqref{expr1.5},
$R_{0,N}(\eta)\rightarrow R_{0}(\eta)$ for $\la$-a.a.
$\eta\in\Ga(\La_0)$. Furthermore, by \eqref{expr1} and \eqref{expr22},
\[
\bigl|R_0(\eta)-R_{0,N}(\eta)\bigr|\leq A_0(1+r^{-1})C_0^{|\eta|}.
\]
By \eqref{LP-meas-def},
\[
\int_{\Ga(\La_0)}(1+|\eta|)^M C_0^{|\eta|}d\la(\eta)<\infty.
\]
Therefore, by the dominated convergence theorem, \eqref{expr4}
yields
\begin{equation}\label{expr5}
 \lim_{N\rightarrow\infty}\ll G,k_{0,N}\rr=\ll G,k_0\rr.
\end{equation}

As before, we identify a function $F$ on~$\Ga_0$ with a sequence of
symmetric functions $F^\n$ on $(\X)^n$, $n\in\N_0$.
Fix any $G\in \Bbs$ and let $F$ be the restriction of $KG$ to $\Gamma_0$. Then there
exist $\La=\La_F\in\Bb$, $M=M_F\in\N$, and $D=D_F>0$ such that for
all $\eta\in\Ga_0$, $$|F(\eta)|=|F(\eta\cap\La)|\leq D
\bigl(1+|\eta\cap\La|)^M$$ (see \cite[Proposition~3.1]{KK2002}). In
particular, $F^\n$ is bounded on  $(\R^{d})^n$ for each $n$. We restrict the
operator $L$ given by \eqref{tjgen-intro} to functions on $\Ga_0$. This
restriction, $L_0$, is given by \eqref{L0}.

We define, for any $R\in L^1(\Ga_0,\la)$, the function $L_0^\ast R$
on $\Ga_0$ by $(L_0^\ast R)^\n:=(L_0^\n)^* R^\n$, where $(L_0^\n)^*$
is given by the right hand side of \eqref{Ln} in which
$c(x_i,x_j,y_1,y_2)$ is replaced by $c(y_1,y_2,x_i,x_j)$.
Analogously to the proof of Proposition~\ref{opers}, we conclude that  $(L_0^\n)^*$ is a bounded
generator of a strongly continuous semigroup on
$X_n=L^1\bigl((\X)^n, dx^\n\bigr)$. In the dual space
$\quad X_n^*:=$ $L^\infty\bigl((\X)^n, dx^\n\bigr)$, we consider the dual
operator to $(L_0^\n)^*$, which is just the $L_0^\n$ given by
\eqref{Ln}. It is easy to see that, under  condition \eqref{c-1},
$L_0^\n$ is a bounded operator on $X_n^*$. Note that $L_0^\n1=0$
implies
\[
\int_{(\X)^n} e^{t(L_0^\n)^*}R^\n dx^\n=\int_{(\X)^n} \bigl(e^{t
L_0^\n}1\bigr)R^\n dx^\n=\int_{(\X)^n} R^\n dx^\n.
\]

To show that $e^{t(L_0^\n)^*}$ preserves the cone $X_n^+$ of all
positive functions in $X_n$, we write $(L_0^\n)^*=L_1+L_2$, where
\begin{multline*}
\bigl(L_{1}R^\n
\bigr)(x^\n)=\sum_{i=1}^n\sum_{j=i+1}^n\int_{\mathbb{R}^{d}}\int_{\mathbb{R}^{d}}c\left(
 y_{1},y_{2},x_{i},x_{j}\right)  \\
\times  R^\n\bigl(
x_1,\ldots,\underset{\overset{\wedge}{i}}{y_1},\ldots,\underset{\overset{\wedge}{j}}{y_2},\ldots,x_n\bigr)
dy_{1}dy_{2}
\end{multline*}
and
\[
\bigl(L_{2}R^{\left( n\right)
}\bigr)(x^\n)=-\Biggl(\sum_{i=1}^n\sum_{j=i+1}^n a_2(x_i,x_j)\Biggr)
R^\n(x^\n).
\]
Clearly, $L_1$ and $L_2$ are bounded operators on $X_n$. Since
$c\geq0$, $L_1$ preserves the cone $X_n^+$, hence so does the
semigroup $e^{tL_1}$. Since $L_2$ is a bounded multiplication
operator, the semigroup $e^{tL_2}$ is a positive multiplication
operator in $X_n$. Therefore, $e^{t(L_0^\n)^*}$ preserves $X_n^+$ by
the Lie--Trotter product formula.

Therefore, if we define functions $R_{t,N}$, $N\geq N_0$ (cf. (\ref{expr1.5})) on $\Ga_0$ 
by $$R_{t,N}^\n:=e^{t(L_0^\n)^*}R^\n_{0,N},$$ then $0\leq R_{t,N}\in
L^1(\Ga_0,d\la)$ with $\int_{\Ga_0}R_{t,N}d\la=1$. Note that
$R_{t,N}^\n\equiv0$ for $n>N$. Therefore, we can define a measure
$\tilde{\mu}_{t,N}\in\mathcal{M}^1_\mathrm{fm}(\Ga)$, concentrated
on $\Ga_0$, by $d\tilde{\mu}_{t,N}=R_{t,N} d\la$ (in fact, the
measure $\tilde{\mu}_{t,N}$ is concentrated on $\bigsqcup_{n=0}^{N}
\Ga^\n$). We denote by $\tilde{k}_{t,N}$ the correlation functional
of $\tilde{\mu}_{t,N}$.

For each function $G$ on $\Ga_0$ we define
$K_0G:=(KG)\!\!\upharpoonright _{\Ga_0}$. Take any $G_0\in\Bbs$ such
that $KG_0\geq0$ on $\Ga$. We denote $F_0:=K_0G_0\geq0$ on $\Ga_0$.
We have, by the definition of a correlation functional,
\begin{equation}\label{expr7}
\ll G_0, k_{t,N}\rr=\langle K_0G_0,R_{t,N}\rangle\geq0.
\end{equation}
On the other hand, if we define a function $U(t)F_0$ on $\Ga_0$ by
$(U(t)F_0)^\n= e^{tL_0^\n}F_0^\n$, we obtain
\begin{align}\nonumber
&\langle F_0, R_{t,N}\rangle=\sum_{n=0}^{N}\frac{1}{n!}\langle
F_0^\n, R_{t,N}^\n\rangle=\sum_{n=0}^{N}\frac{1}{n!}\langle F_0^\n,
e^{t(L_0^\n)^*}R_{0,N}^\n\rangle\\=&\,
\sum_{n=0}^{N}\frac{1}{n!}\langle e^{tL_0^\n}F_0^\n,
R_{0,N}^\n\rangle=\langle U(t)F_0,
R_{0,N}\rangle\nonumber\\=&\,\langle\!\langle
K_0^{-1}U(t)K_0G_0,k_{0,N}\rangle\!\rangle.\label{expr8}
\end{align}
It is evident, by Proposition~\ref{twodiag}, that
\begin{equation}\label{descsg}
(K_0^{-1}U(t)K_0G_0)^\n=e^{t (L_0^\n+W^\n)}G_0^\n=(V(t)G_0)^{\n},
\end{equation}
where $V(t)$ is as in \eqref{Vtdef}.

As a result, from \eqref{expr7}--\eqref{descsg}, we get
\begin{equation}\label{expr9}
\langle\!\langle G_0,k_{t,N}\rangle\!\rangle=\langle\!\langle
G_t,k_{0,N}\rangle\!\rangle,
\end{equation}
where $G_t=V(t)G_0$.
By \eqref{kmu0}, $k_0\in \K_{C_0}$. Hence, by Theorem~\ref{th-cr},
for any $t\in(0,T)$, there exists $\tilde{k}_t\in\K_{C_t}$ such that
\begin{equation}\label{expr10}
\langle\!\langle G_0,\tilde{k}_t\rangle\!\rangle=\langle\!\langle
G_t,k_0\rangle\!\rangle.
\end{equation}
Note that here, for a given $t\in(0,T)$, we may consider
$G_0\in\Bbs\subset \L_{C_t}$, where $C_t$ is given by \eqref{Ct}.
Then, by the proof of Theorem~\ref{th-cr}, $G_t=V(t)G_0\in\L_{C_0}$.
By \eqref{expr9} and \eqref{expr10}, to prove that
\begin{equation}\label{expr11}
\ll G_0,\tilde{k}_t\rr=\lim_{N\rightarrow \infty}\ll
G_0,k_{t,N}\rr,
\end{equation}ı
we only need to show that
\begin{equation}\label{expr12}
\lim_{N\rightarrow\infty}\langle\!\langle
G_t,k_{0,N}\rangle\!\rangle =\langle\!\langle
G_t,k_{0}\rangle\!\rangle.
\end{equation}
The latter fact is a direct consequence of \eqref{expr5} if we take
into account that $G_t\in\L_{C_0}$ and that the set $\Bbs$ is dense
in $\L_{C_0}$. Indeed, let us consider, for a fixed $t\in(0,T)$ and
for any $\eps>0$, a function $G\in\Bbs$ such that
$\|G-G_t\|_{\L_{C_0}}<\eps$. Then, by \eqref{expr5}, there exists an
$N_1\geq N_0$, such that, for any $N\geq N_1$
$$
\bigl| \ll G,k_{0,N}\rr-\ll G,k_{0}\rr\bigr|<\eps.
$$
Therefore, by \eqref{normestN}, for any $N\geq N_1$
\begin{align*}
&\,\bigl| \ll G_t,k_{0,N}\rr-\ll
G_t,k_{0}\rr\bigr|\\\leq&\,\|G-G_t\|_{\L_{C_0}}\|k_{0,N}\|_{\K_{C_0}}+\bigl|
\ll G,k_{0,N}\rr-\ll G,k_{0}\rr\bigr|
+\|G-G_t\|_{\L_{C_0}}\|k_{0}\|_{\K_{C_0}}\\<&\,\eps (r^{-1}+1)
\|k_0\|_{\K_{C_0}}+\eps,
\end{align*}
which proves \eqref{expr12}. Therefore, \eqref{expr11} holds. Hence,
by \eqref{expr7}, $\tilde{k}_t=:\tilde{k}_t^{\La_0}$ is Lenard positive
definite for any $t\in(0,T)$.

As a result, for each  $\La\in\Bb$, the evolution
$k_{\mu^\La}\mapsto\tilde{k}_t^{\La}$, $t\in(0,T)$, preserves
positive-definiteness and $\ll G_0,\tilde{k}_t^\La \rr = \ll G_t,
k_{\mu^\La} \rr$. On the other hand, by Theorem~\ref{th-cr}, we have
the evolution $k_\mu\mapsto k_t$, $t\in(0,T)$ satisfying $ \ll
G_0,k_t \rr = \ll G_t, k_\mu \rr$.

Since $k_{\mu^\La}=1\!\!1_{\Ga(\La)}k_\mu$, it is evident that, for
any $t\in(0,T)$, $\ll G_t, k_{\mu^\La} \rr\rightarrow \ll G_t, k_\mu
\rr$ as $\La\nearrow\X$. Therefore, $$\ll G_0,k_t
\rr=\lim_{\La\nearrow\X} \ll G_0,\tilde{k}_t^\La \rr \geq0.$$ Hence,
for each $t\in(0,T)$, there exists a unique measure
$\mu_t\in\mathcal{M}^1_\mathrm{fm}(\Ga)$ whose correlation
functional is $k_t$.
\end{proof}

\section{Vlasov-type scaling}

For the reader's convenience, we start with explaining the idea of
the Vlasov-type scaling. A general scheme for both birth-and-death and conservative dynamics may be found in
\cite{FKK2010a}. Certain realizations of this approach were studied
in \cite{FKK2010b, FKK2010c, FKKoz2011, BKKK2011}.

We would like to construct a scaling of the generator $L$, say
$L_\eps$, $\eps>0$, such that the following requirements  are satisfied. Assume
 we have an evolution $V_\eps(t)$ corresponding to the equation
$\frac{\partial}{\partial t} G_{t,\eps}=\widehat{L}_\eps
G_{t,\eps}$. Assume that,  in a proper functional space, we have the
dual evolution $V^*_\eps(t)$ with respect to the duality
\eqref{dual-sm}. Let us choose an initial function for this dual
evolution with a big singularity in $\eps$, namely,
$k_0^{(\eps)}(\eta) \sim \eps^{-|\eta|} r_0(\eta)$ as
$\eps\rightarrow 0$ ($\eta\in\Ga_0$), with some function $r_0$,
being independent of $\eps$. Our first requirement on the scaling $L\mapsto
L_\eps$ is that the evolution $V^*_\eps(t)$ preserve the order of
the singularity:
\begin{equation}\label{ordersing}
\bigl(V^*_\eps(t) k_0^{(\eps)}\bigr)(\eta) \sim \eps^{-|\eta|}
r_t(\eta) \quad \text{as }\eps\rightarrow 0, \ \ \eta\in\Ga_0,
\end{equation}
where $r_t$ is such that the dynamics $r_0 \mapsto r_t$ preserves
the so-called Lebesgue--Poisson exponents. Namely, if
$r_0(\eta)=e_\la(p_0,\eta):=\prod_{x\in\eta}p_0(x)$, then
$r_t(\eta)=e_\la(p_t,\eta)=\prod_{x\in\eta}p_t(x)$. (Here,
$\prod_{x\in\emptyset}:=1$.) Furthermore, we require that the
$p_t$'s satisfy a  (nonlinear, in general) differential equation
\begin{equation}\label{V-eqn-gen}
\dfrac{\partial}{\partial t}p_t(x) = \upsilon(p_t)(x),
\end{equation}
which  will be called a {\it Vlasov-type equation}.

For any $c>0$, we set $\left( R_{c}G\right) \left( \eta \right)
=c^{\left\vert \eta \right\vert }G\left( \eta \right) $. Roughly
speaking, \eqref{ordersing} means that
$$R_\eps V^*_\eps(t)R_{\eps^{-1}}
r_0 \sim  r_t.$$
This gives us a hint to consider the map $R_\eps
\widehat{L}^*_\eps R_{\eps^{-1}} $, which is dual to
\[
\widehat{L}_{\varepsilon ,\mathrm{ren}}=R_{\varepsilon ^{-1}}\widehat{L}
_{\varepsilon }R_{\varepsilon }.
\]

\begin{remark} We expect that the limiting dynamics
for $R_\eps V^*_\eps(t)R_{\eps^{-1}}$ will preserve the
Lebesgue--Poisson exponents. Note that the Lebesgue--Poisson
exponent $e_\la(p_t)$, $t\geq0$, is the correlation functional of
the Poisson measure $\pi_{p_t}$ on $\Ga$ with the non-constant
intensity $p_t$ (for a rigorous definition of such a Poisson
measure, see e.g.\ \cite{AKR1998a}). Therefore, at least
heuristically, we expect to have the limiting dynamics:
$\pi_{p_0}\mapsto\pi_{p_t}$, where $p_t$ satisfies the equation
\eqref{V-eqn-gen}.
\end{remark}

Below we will realize this scheme in the case of the generator  $L$ given by
\eqref{tjgen-intro}. We will consider, for any $\varepsilon >0$, the
scaled operator $L_\eps=\eps L$. Then, obviously,
$\widehat{L}_{\varepsilon }G=\varepsilon \widehat{L}G=\varepsilon
L_{0}G+\varepsilon WG$, where $L_0$ and $W$ are given by
Proposition~\ref{twodiag}.

\begin{proposition}
For any $\eps>0$
\begin{equation}\label{renexpr}
\widehat{L}_{\varepsilon ,\mathrm{ren}}=\eps L_0+W.
\end{equation}
Moreover, let \eqref{c-1}--\eqref{c-4}, \eqref{dominate} hold.
Then, for any $
C>0$, the initial value problem
\begin{equation}\label{Cauchy-eps}
\begin{aligned}
\dfrac{\partial }{\partial t}G_{t,\varepsilon }\left( \eta \right)
&=\bigl( \widehat{L}_{\varepsilon ,\mathrm{ren}}G_{t,\varepsilon
}\bigr) \left( \eta
\right), \\[2mm]
G_{t,\varepsilon }\bigm\vert _{t=0}&=G_{0,\varepsilon }\in \L_C
\end{aligned}
\end{equation}
has a unique solution  $G_{t,\varepsilon }\in\L_{\rho(t,C)}$, where $\rho(t,C)$ is given by \eqref{weight}.
\end{proposition}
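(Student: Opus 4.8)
The plan is to reduce the statement to an application of Theorem~\ref{th1}. First I would establish the identity \eqref{renexpr}. Since $R_c$ acts on the $n$-th component as multiplication by $c^n$, and $L_0$ maps the $n$-th component to the $n$-th component while $W$ maps the $(n-1)$-st component to the $n$-th component, a direct computation gives $R_{\eps^{-1}}L_0^\n R_\eps = L_0^\n$ (the factors $\eps^{-n}$ and $\eps^{n}$ cancel) and $R_{\eps^{-1}}W^\n R_\eps = \eps^{-n}\eps^{\,n-1}W^\n = \eps^{-1}W^\n$. Hence $\widehat L_{\eps,\mathrm{ren}} = R_{\eps^{-1}}(\eps\widehat L_\eps)R_\eps$ wait---more carefully, $\widehat L_\eps = \eps L_0 + \eps W$, so $R_{\eps^{-1}}\widehat L_\eps R_\eps$ has diagonal part $\eps L_0$ and subdiagonal part $\eps\cdot\eps^{-1}W = W$, which is exactly \eqref{renexpr}. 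This is the only genuinely new computation and it is short.

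Next I would verify that the operator $\eps L_0 + W$ satisfies the hypotheses of Theorem~\ref{th1}. By Proposition~\ref{opers}(i), under \eqref{c-1}, \eqref{c-2}, \eqref{dominate}, each $L_0^\n$ is the generator of a contraction semigroup in $X_n$; multiplying by $\eps>0$, the operator $\eps L_0^\n$ is again a bounded generator of a contraction semigroup (rescaling time). By Proposition~\ref{opers}(ii), under \eqref{c-3}, \eqref{c-4} we have $\|W^\n\|_{X_{n-1}\to X_n}\le n(n-1)(c_3+c_4)$, so the bound $\|W^\n\|_{X_{n-1}\to X_n}\le Bn(n-1)$ holds with $B=\max\{1,\,c_3+c_4\}$, a constant independent of $n$ and of $\eps$. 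Thus all the structural assumptions of Theorem~\ref{th1} are met by the pair $(\eps L_0, W)$.

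Finally I would invoke Theorem~\ref{th1} with $L_0$ replaced by $\eps L_0$ and the same $W$: for any $C>0$ and any $G_{0,\eps}\in\L_C$, the Cauchy problem \eqref{Cauchy-eps} has a unique solution $G_{t,\eps}\in\L_{\rho(t,C)}$ with $\rho(t,C)=C/(1+BCt)$ as in \eqref{weight}, together with the norm bound $\|G_{t,\eps}\|_{\L_{\rho(t,C)}}\le\|G_{0,\eps}\|_{\L_C}$. Since the constant $B$ chosen above does not depend on $\eps$, the weight $\rho(t,C)$ is the same for every $\eps>0$, which is precisely what is asserted.

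I do not anticipate a serious obstacle here: the statement is essentially a corollary of Theorem~\ref{th1} once \eqref{renexpr} is checked. The only point requiring a little care is making sure the constant $B$ in the hypothesis $\|W^\n\|_{X_{n-1}\to X_n}\le Bn(n-1)$ is chosen $\ge 1$ (as Theorem~\ref{th1} requires) and independent of $\eps$, which is immediate since $W$ itself does not depend on $\eps$; and checking that multiplying a bounded contraction-semigroup generator by a positive scalar again yields a bounded contraction-semigroup generator, which is standard.
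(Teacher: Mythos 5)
Your proposal is correct and follows essentially the same route as the paper: verify \eqref{renexpr} via the commutation relations $L_0R_\eps=R_\eps L_0$, $WR_\eps=\eps^{-1}R_\eps W$ (your component-wise computation is the same fact), note via Proposition~\ref{opers} that $\eps L_0^\n$ still generates a contraction semigroup and that $W^\n$ satisfies the bound with a $B\geq1$ independent of $\eps$, and then apply Theorem~\ref{th1}. No gaps.
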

\begin{proof}
By Proposition~\ref{twodiag}, $L_{0}R_{\varepsilon } =R_{\varepsilon }L_{0}$ and $WR_{\varepsilon } =\varepsilon ^{-1}R_{\varepsilon }W$. Therefore,
\begin{align*}
\widehat{L}_{\varepsilon ,\mathrm{ren}}&=R_{\varepsilon
^{-1}}\varepsilon L_{0}R_{\varepsilon }+R_{\varepsilon
^{-1}}\varepsilon WR_{\varepsilon }\\&=\varepsilon R_{\varepsilon
^{-1}}R_{\varepsilon }L_{0}+\varepsilon ^{-1}\varepsilon
R_{\varepsilon ^{-1}}R_{\varepsilon }W=\varepsilon L_{0}+W.
\end{align*}
By Proposition~\ref{opers}, $\left( \varepsilon L_{0}\right) ^{\left( n\right) }$ is a
generator of a contraction semigroup in $X_{n}$ for any $n\geq 1$. Hence, the statement is
a direct consequence of Theorem~\ref{th1}. Note that the solution of \eqref{Cauchy-eps}
can be found recursively:
\begin{align}\nonumber
G_{t,\varepsilon }^{\left( n\right) }\bigl(x^\n \bigr) &=\left(
e^{\varepsilon tL_{0}^{\left( n\right) }}G_{0,\varepsilon }^{\left(
n\right) }\right) \bigl(x^\n \bigr) \\&\qquad+\int_{0}^{t}\left(
e^{\varepsilon \left( t-s\right) L_{0}^{\left( n\right) }}W^{\left(
n\right) }G_{s,\varepsilon }^{\left( n-1\right) }\right) \bigl(x^\n
\bigr) ds,~n\geq 1,\label{recurs-eps}
\end{align}
and  $G_{t,\varepsilon }^{\left( 0\right) } =G_{0,\varepsilon }^{\left( 0\right)}$.
\end{proof}

By \eqref{renexpr},
$\widehat{L}_{\eps,\mathrm{ren}}G(\eta)\rightarrow WG(\eta)$ as
$\eps\rightarrow0$ ($\eta\in\Ga_0$). Let \eqref{c-3}--\eqref{c-4}
hold. By Theorem~\ref{th1},
 for any $C>0$, the initial value problem
\begin{equation}\label{Cauchy-V}
\begin{aligned}
\dfrac{\partial }{\partial t}G_{t,V}\left( \eta \right) &=\left(
WG_{t,V}\right) \left( \eta \right), \\[2mm]
\left. G_{t,V}\right\vert _{t=0}&=G_{0,V}\in\L_C
\end{aligned}
\end{equation}
has a unique solution $G_{t,V}\in\L _{\rho(t,C)}\,$, with $\rho\left(
t,C\right) $ given by \eqref{weight}. This solution can be
constructed recursively, namely, $G_{t,V}^{\left( 0\right) }
=G_{0,V}^{\left( 0\right) }$ and
\begin{equation}\label{recurs-V}
G_{t,V}^{\left( n\right) }\bigl(x^\n \bigr) =G_{0,V}^{\left( n\right) }
\bigl(x^\n \bigr) +\int_{0}^{t}\left( W^{\left( n\right)
}G_{s,V}^{\left( n-1\right) }\right) \bigl(x^\n \bigr) ds,~n\geq 1.
\end{equation}

\begin{theorem}\label{th-conv-qo}
Suppose that  conditions \eqref{c-1}--\eqref{c-4}
and \eqref{dominate} hold. Let $C>0$ and let $\left\{
G_{0,V},G_{0,\varepsilon },\varepsilon >0\right\} \subset \L_C$ be such that
\begin{equation}
\bigl\Vert G_{0,\varepsilon }-G_{0,V}\bigr\Vert _{\mathcal{L}
_{C}}\rightarrow 0\quad \text{as }\varepsilon \rightarrow 0.
\label{init_conv}
\end{equation}
Then, for any $T>0$ and  any $r<\rho(T,C)$,
\begin{equation}
\sup_{t\in[0,T]}\bigl\Vert G_{t,\varepsilon }-G_{t,V}\bigr\Vert _{\L
_r}\rightarrow 0\quad \text{as }\varepsilon \rightarrow 0. \label{time_conv}
\end{equation}
\end{theorem}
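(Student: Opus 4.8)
The plan is to compare the two recursive representations \eqref{recurs-eps} and \eqref{recurs-V} component-wise in $n$, and then re-sum over $n$ with the time-dependent weights exactly as in the proof of Theorem~\ref{th1}. First I would fix $T>0$ and $r<\rho(T,C)$, and pick an intermediate constant $C'$ with $r<\rho(T,C')<\rho(T,C)$ (possible since $\rho(T,\cdot)$ is strictly increasing); this will give room to absorb an extra factor of $t$ coming from the error terms. The key is to estimate $\delta_{t,\varepsilon}^{(n)}:=G_{t,\varepsilon}^{(n)}-G_{t,V}^{(n)}$ in $X_n$. Subtracting \eqref{recurs-V} from \eqref{recurs-eps} and inserting $\pm e^{\varepsilon tL_0^{(n)}}G_{0,V}^{(n)}$ and $\pm e^{\varepsilon(t-s)L_0^{(n)}}W^{(n)}G_{s,V}^{(n-1)}$, I would split $\delta_{t,\varepsilon}^{(n)}$ into three pieces: a ``semigroup-vs-identity'' error $\bigl(e^{\varepsilon tL_0^{(n)}}-I\bigr)G_{0,V}^{(n)}$, an analogous error inside the time integral with $W^{(n)}G_{s,V}^{(n-1)}$ in place of $G_{0,V}^{(n)}$, and a genuine recursion term $\int_0^t e^{\varepsilon(t-s)L_0^{(n)}}W^{(n)}\delta_{s,\varepsilon}^{(n-1)}\,ds$ plus the contribution $\pm\bigl(e^{\varepsilon tL_0^{(n)}}-I\bigr)(G_{0,\varepsilon}^{(n)}-G_{0,V}^{(n)})$ and $e^{\varepsilon tL_0^{(n)}}(G_{0,\varepsilon}^{(n)}-G_{0,V}^{(n)})$ arising from the different initial data.

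Next I would control the ``semigroup-vs-identity'' errors. Using the representation \eqref{represent}, one has $\bigl\|(e^{\varepsilon tL_0^{(n)}}-I)H\bigr\|_{X_n}\le \varepsilon t\,\|L_0^{(n)}\|_{X_n\to X_n}\,\|H\|_{X_n}$, and if one assumes (as in Theorem~\ref{th-cr}) the bound $\|L_0^{(n)}\|_{X_n\to X_n}\le An(n-1)$, this is $\le A\varepsilon t\,n(n-1)\|H\|_{X_n}$. Applying this with $H=G_{0,V}^{(n)}$ and, after the $X_{n-1}\to X_n$ estimate $\|W^{(n)}G_{s,V}^{(n-1)}\|_{X_n}\le Bn(n-1)\|G_{s,V}^{(n-1)}\|_{X_{n-1}}$, with $H=W^{(n)}G_{s,V}^{(n-1)}$, yields pointwise-in-$n$ bounds of order $\varepsilon$ times combinatorial factors times norms of the (known) Vlasov solution $G_{\cdot,V}$, which by Theorem~\ref{th1} are controlled by $\|G_{0,V}\|_{\L_C}$ through the same factorial bookkeeping as in \eqref{estforGtn}. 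The recursion term is handled by the contraction property $\|e^{\varepsilon(t-s)L_0^{(n)}}\|_{X_n\to X_n}\le1$ and $\|W^{(n)}\|_{X_{n-1}\to X_n}\le Bn(n-1)$, giving $\|\delta_{t,\varepsilon}^{(n)}\|_{X_n}$ bounded by $Bn(n-1)\int_0^t\|\delta_{s,\varepsilon}^{(n-1)}\|_{X_{n-1}}\,ds$ plus the $O(\varepsilon)$ source terms and the initial-data term $\le (1+A\varepsilon t\,n(n-1))\|G_{0,\varepsilon}^{(n)}-G_{0,V}^{(n)}\|_{X_n}$.

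Then I would iterate this inequality in $n$, exactly mirroring the $V_{k,n}(t)$ expansion in the proof of Theorem~\ref{th1}: unrolling the recursion expresses $\delta_{t,\varepsilon}^{(n)}$ as a sum over $k$ of $k$-fold time integrals of products of $W^{(\cdot)}$'s acting on either the $O(\varepsilon)$ sources at level $n-k$ or on $G_{0,\varepsilon}^{(n-k)}-G_{0,V}^{(n-k)}$. Multiplying by $r^n/n!$ and summing over $n$, the combinatorial factors $\frac{n!}{k!(n-k)!}\frac{(n-1)!}{(n-k-1)!}$ reassemble into a convergent series precisely because $r<\rho(T,C')$ makes the geometric-type series $\sum_n q^n(tBC')^n\binom{n+k-1}{n}$ converge with a uniform bound on $[0,T]$; the overall estimate takes the schematic form
\begin{equation*}
\sup_{t\in[0,T]}\|G_{t,\varepsilon}-G_{t,V}\|_{\L_r}\le \|G_{0,\varepsilon}-G_{0,V}\|_{\L_{C'}}\cdot\mathrm{const}(T,C',A,B)+\varepsilon\cdot\mathrm{const}(T,C',A,B)\,\|G_{0,V}\|_{\L_{C'}},
\end{equation*}
so that letting $\varepsilon\to0$ and using \eqref{init_conv} gives \eqref{time_conv}. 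The main obstacle I anticipate is the bookkeeping needed to show that the extra factor of $n(n-1)$ introduced by each semigroup-vs-identity error does not destroy the summability: this is why one must work in $\L_r$ for $r$ strictly below $\rho(T,C)$ and pass through the intermediate weight $C'$, so that each such extra polynomial factor is absorbed by the strict inequality $tBC'<1/q(t)$ in the geometric series, just as the extra $n(n-1)$ from $\|L_0^{(n)}\|$ was absorbed in the proof of Theorem~\ref{th-cr} via \eqref{ex4}. Everything else is routine Gronwall-type iteration combined with the dominated convergence argument already used there.
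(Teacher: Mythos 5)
Your decomposition is exactly the paper's: the proof of Theorem~\ref{th-conv-qo} also subtracts \eqref{recurs-V} from \eqref{recurs-eps}, inserts the same intermediate terms, and bounds the four pieces (initial-data term by contractivity, the two ``semigroup-vs-identity'' errors via \eqref{represent} together with $\|L_0^\n\|_{X_n\to X_n}\leq An(n-1)$ and $\|W^\n\|_{X_{n-1}\to X_n}\leq Bn(n-1)$, and the recursion term by contractivity plus \eqref{estWn}); note that the bound on $\|L_0^\n\|$ is not an additional hypothesis here, since \eqref{estLn} gives it with $A=(c_1+c_2)/2$ under \eqref{c-1}--\eqref{c-2}, which is precisely what the paper uses. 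Where you diverge is the passage from componentwise control to the $\L_r$-estimate: the paper argues by induction in $n$ that $\sup_{t\in[0,T]}\|G^\n_{t,\eps}-G^\n_{t,V}\|_{X_n}\to0$ for each fixed $n$ (the recursion term is killed by the inductive hypothesis, so no unrolling is needed), and then concludes via a Weierstrass-type uniform tail bound: by \eqref{maincompar} each summand of the $\L_r$-series is dominated, uniformly in $\eps$ and $t\in[0,T]$, by $\mathrm{const}\cdot(r/\rho(T,C))^n$, so termwise convergence suffices. You instead unroll the recursion completely, mirroring the $V_{k,n}(t)$ expansion and the bookkeeping of \eqref{estforGtn}, and resum with the weight $r^n/n!$ to get an explicit estimate of the form $\mathrm{const}\cdot\|G_{0,\eps}-G_{0,V}\|_{\L_C}+\eps\cdot\mathrm{const}\cdot\|G_{0,V}\|_{\L_C}$. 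Both routes are valid and both exploit the strict inequality $r<\rho(T,C)$ (so $rBT<1$ strictly, which absorbs the extra polynomial factors in $n$; your intermediate weight $C'$ is a convenient but not indispensable device, since $r<\rho(T,C)$ already corresponds to an initial weight $\tilde r<C$). Your version costs more combinatorial bookkeeping in the spirit of \eqref{ex4} but buys an explicit rate of convergence in $\eps$, whereas the paper's induction-plus-domination argument is shorter and purely qualitative.
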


\begin{proof}
By \eqref{recurs-eps} and  \eqref{recurs-V}, we have
\begin{align}
&\bigl\Vert G_{t,\varepsilon }^{\left( n\right) }-G_{t,V}^{\left( n\right) }
\bigr\Vert _{X_{n}}\nonumber \\
\leq&\, \bigl\Vert e^{\varepsilon tL_{0}^{\left( n\right) }}G_{0,\varepsilon
}^{\left( n\right) }-G_{0,V}^{\left( n\right) }\bigr\Vert _{X_{n}}\nonumber \\
&+\int_{0}^{t}\bigl\Vert e^{\varepsilon \left( t-s\right) L_{0}^{\left(
n\right) }}W^{\left( n\right) }G_{s,\varepsilon }^{\left( n-1\right)
}-W^{\left( n\right) }G_{s,V}^{\left( n-1\right) }\bigr\Vert _{X_{n}}ds\nonumber \\
\leq& \,\Bigl\Vert e^{\varepsilon tL_{0}^{\left( n\right) }}\bigl(
G_{0,\varepsilon }^{\left( n\right) }-G_{0,V}^{\left( n\right) }\bigr)
\Bigr\Vert _{X_{n}}+\Bigl\Vert \bigl( e^{\varepsilon tL_{0}^{\left( n\right)
}}-1\bigr) G_{0,V}^{\left( n\right) }\Bigr\Vert _{X_{n}}\nonumber \\
&+\int_{0}^{t}\Bigl\Vert e^{\varepsilon \left( t-s\right) L_{0}^{\left(
n\right) }}W^{\left( n\right) }\bigl( G_{s,\varepsilon }^{\left( n-1\right)
}-G_{s,V}^{\left( n-1\right) }\bigr) \Bigr\Vert _{X_{n}}ds\nonumber \\
&+\int_{0}^{t}\Bigl\Vert \bigl( e^{\varepsilon \left( t-s\right)
L_{0}^{\left( n\right) }}-1\bigr) W^{\left( n\right) }G_{s,V}^{\left(
n-1\right) }\Bigr\Vert _{X_{n}}ds\nonumber \\\intertext{By Proposition~\ref{opers},
$L_0^\n$ is the generator of a contraction
semigroup in $X_n$, hence we continue}
\leq& \,\bigl\Vert G_{0,\varepsilon }^{\left( n\right) }-G_{0,V}^{\left(
n\right) }\bigr\Vert _{X_{n}}+\Bigl\Vert \bigl( e^{\varepsilon
tL_{0}^{\left( n\right) }}-1\bigr) G_{0,V}^{\left( n\right) }\Bigr\Vert
_{X_{n}} \nonumber\\
&+\int_{0}^{t}\Bigl\Vert W^{\left( n\right) }\bigl( G_{s,\varepsilon
}^{\left( n-1\right) }-G_{s,V}^{\left( n-1\right) }\bigr) \Bigr\Vert
_{X_{n}}ds\nonumber \\
&+\int_{0}^{t}\Bigl\Vert \bigl( e^{\varepsilon \left( t-s\right)
L_{0}^{\left( n\right) }}-1\bigr) W^{\left( n\right) }G_{s,V}^{\left(
n-1\right) }\Bigr\Vert _{X_{n}}ds.\label{expr0}
\end{align}
By \eqref{normLc}, for any $n\in\N_0$
\begin{equation}\label{eq}
\frac{C^{n}}{n!}\bigl\Vert G_{0,\varepsilon }^{\left( n\right)
}-G_{0,V}^{\left( n\right) }\bigr\Vert _{X_{n}}\leq \bigl\Vert
G_{0,\varepsilon }-G_{0,V}\bigr\Vert _{\mathcal{L}_{C}}.
\end{equation}
Hence, condition (\ref{init_conv}) implies that $\bigl\Vert
G_{0,\varepsilon }^{\left( n\right) }-G_{0,V}^{\left( n\right) }\bigr\Vert
_{X_{n}}\rightarrow 0$ as $\varepsilon \rightarrow 0$. Since
$L_0^\n$ is a bounded generator of a strongly continuous contraction
semigroup on $X_n$ and since  $G_{0,V}^{\left( n\right) }\in X_{n}$, we get from \eqref{represent}:
\begin{equation*}
\sup_{t\in[0,T]}\Bigl\Vert \bigl( e^{\varepsilon tL_{0}^{\left( n\right) }}-1\bigr)
G_{0,V}^{\left( n\right) }\Bigr\Vert _{X_{n}}\leq \eps T\bigl\Vert L_{0}^{\left( n\right) }G_{0,V}^{\left( n\right) }\bigr\Vert _{X_{n}} \rightarrow 0\quad \text{as }
\varepsilon \rightarrow 0.
\end{equation*}
Next, by the inclusion $W^{\left( n\right)
}G_{s,V}^{\left( n-1\right) }\in X_{n}$, formula
\eqref{represent} also yields that, for any fixed $t>0$
and any $s\in \left[ 0,t\right]$,
\begin{align*}
&\Bigl\Vert \bigl( e^{\varepsilon \left( t-s\right) L_{0}^{\left( n\right)
}}-1\bigr) W^{\left( n\right) }G_{s,V}^{\left( n-1\right) }\Bigr\Vert
_{X_{n}}\leq\int_0^{\eps (t-s)}\bigl\Vert L_{0}^{\left( n\right) }W^{\left( n\right) }G_{\tau,V}^{\left( n\right) }\bigr\Vert _{X_{n}}d\tau\\
\intertext{By Proposition~\ref{opers}, we continue with $A:=\dfrac{c_1+c_2}{2}$ and
 $B:=c_3+c_4$:}
 \leq  &\, ABn^2\left( n-1\right)^2 \int_0^{\eps (t-s)}\bigl\Vert G_{\tau,V}^{\left(
n-1\right) }\bigr\Vert _{X_{n-1}} d\tau\\\intertext{By \eqref{normLc},
similarly to \eqref{eq}, we estimate:}\leq &\,ABn^2\left( n-1\right)^2  \int_0^{\eps (t-s)}({\rho(\tau,C)})^{-(n-1)}\left( n-1\right) !\|G_{\tau,V}\|_{\L_{\rho(\tau,C)}}d\tau\\\intertext{For $0\leq
s\leq t\leq T$ and  $0<\eps<1$, by \eqref{weight} and  \eqref{maincompar}, we continue:}&\leq \eps TABn^2\left( n-1\right)^2 ({\rho(T,C)})^{-(n-1)} \left( n-1\right) !\|G_{0,V}\|_{\L_{C}}.
\end{align*}
Therefore,
\begin{equation*}
\sup_{t\in[0,T]}\int_{0}^{t}\bigl\Vert \bigl( e^{\varepsilon \left( t-s\right) L_{0}^{\left(
n\right) }}-1\bigr) W^{\left( n\right) }G_{s,V}^{\left( n-1\right) }
\bigr\Vert _{X_{n}}ds\rightarrow 0\quad \text{as }  \varepsilon \rightarrow 0.
\end{equation*}
Suppose now that
\begin{equation}\label{assumpt}
\sup_{t\in[0,T]}\bigl\Vert G_{t,\varepsilon }^{\left( n-1\right) }-G_{t,V}^{\left(
n-1\right) }\bigr\Vert _{X_{n-1}}\rightarrow 0\quad \text{as }  \varepsilon \rightarrow 0.
\end{equation}
Then, by \eqref{estWn},
\begin{align*}
&\sup_{t\in[0,T]}\int_{0}^{t}\Bigl\Vert W^{\left( n\right) }\bigl( G_{s,\varepsilon }^{\left(
n-1\right) }-G_{s,V}^{\left( n-1\right) }\bigr) \Bigr\Vert
_{X_{n}}ds\\\leq &\,Bn(n-1)\,T\sup_{t\in[0,T]}\bigl\Vert G_{t,\varepsilon }^{\left( n-1\right) }-G_{t,V}^{\left(
n-1\right) }\bigr\Vert _{X_{n-1}}\rightarrow 0\quad \text{as }  \varepsilon \rightarrow 0.
\end{align*}
Note that, for any $t\in[0,T]$, using  \eqref{eq}, we obtain
$$\bigl\vert G_{t,\varepsilon }^{\left( 0\right) }-G_{t,V}^{\left( 0\right) }
\bigr\vert=\bigl\vert G_{0,\varepsilon }^{\left( 0\right) }-G_{0,V}^{\left( 0\right) }
\bigr\vert\rightarrow 0\quad\text{as }\varepsilon \rightarrow 0.$$  As a result, by the induction principle,
we conclude from \eqref{expr0}   that,
for any $n\in\N_0$,
\begin{equation}\label{ind}
\sup_{t\in[0,T]}\bigl\Vert G_{t,\varepsilon }^{\left( n\right) }-G_{t,V}^{\left( n\right) }
\bigr\Vert _{X_{n}}\rightarrow 0\quad \text{as } \varepsilon \rightarrow 0.
\end{equation}

Let now $0<r<\rho(T,C)$.
Then
\begin{equation}\label{ser}
\sup_{t\in[0,T]}\bigl\Vert G_{t,\varepsilon }-G_{t,V}\bigr\Vert _{\mathcal{L}_r} \leq\sum_{n=0}^{\infty }\frac{r^n}{n!}\sup_{t\in[0,T]}\bigl\Vert G_{t,\varepsilon }^{\left( n\right)
}-G_{t,V}^{\left( n\right) }\bigr\Vert _{X_{n}}.
\end{equation}
Hence, by \eqref{ind} and \eqref{ser}, to prove the theorem it suffices
 to show that the series \eqref{ser} converges uniformly
in $\eps$.
But the latter series may be estimated, similarly to the considerations
above:
\begin{align*}
&\sum_{n=0}^{\infty }\frac{r^n}{n!}\sup_{t\in[0,T]}\Bigl(\bigl\Vert
G_{t,\varepsilon }^{\left( n\right) }\bigr\Vert_{X_n}+\bigl\Vert
G_{t,V}^{\left( n\right) }\bigr\Vert _{X_{n}}\Bigr)\\ \leq &\,
\sum_{n=0}^{\infty
}\frac{r^n}{n!}\sup_{t\in[0,T]}\Bigl((\rho(t,C))^{-n}n!\bigl\Vert
G_{t,\varepsilon }\bigr\Vert
_{\L_{\rho(t,C)}}+(\rho(t,C))^{-n}n!\bigl\Vert G_{t,V}\bigr\Vert
_{\L_{\rho(t,C)}}\Bigr)\\ \leq &\,
\Bigl(\|G_{0,V}\|_{\L_{C}}+\sup_{\eps>0}\|G_{0,\eps}\|_{\L_{C}}\Bigr)\sum_{n=0}^{\infty
}\biggl(\frac{r^{} }{\rho(T,C)}\biggr)^n <\infty,
\end{align*}
since  $G_{0,\eps}\rightarrow G_{0,V}$ in
$\L_C$ yields $\sup_{\eps>0}\|G_{0,\eps}\|_{\L_{C}}<\infty$.
\end{proof}

\begin{proposition}\label{prop-conv-cf}
Let the conditions of Theorem~\ref{th-cr} hold. Let $C_0>0$,
$T=\frac{1}{BC_0}$,  and $\bigl\{ k_{0,V},
k_{0,\eps},\eps>0\bigr\}\subset \K_{C_0}$. Then, for any $t\in(0,T)$,
there exist functions $\bigl\{k_{t,V},k_{t,\eps},
\eps>0\bigr\}\subset \K_{C_t}$, with $C_t$ given by \eqref{Ct}, such
that,  for any $G\in\Bbs$,
\[
\frac{\partial}{\partial t}\langle\!\langle
G,k_{t,\eps}\rangle\!\rangle=\langle\!\langle
(\eps L_0+W)G,k_{t,\eps}\rangle\!\rangle,
\quad \frac{\partial}{\partial t}\langle\!\langle
G,k_{t,V}\rangle\!\rangle=\langle\!\langle
WG,k_{t,V}\rangle\!\rangle.
\]
Moreover,
$\|k_{t,\eps}\|_{\K_{C_t}}\leq\|k_{0,\eps}\|_{\K_{C_0}}$,
$\|k_{t,V}\|_{\K_{C_t}}\leq\|k_{0,V}\|_{\K_{C_0}}$.
If, additionally,
\begin{equation}\label{initconv}
\lim_{\eps\rightarrow0}\|k_{0,\eps}-k_{0,V}\|_{\K_{C_0}}=0,
\end{equation}
then for any $T'\in(0,T)$,  $r_0>C_{T'}$, and $G_0\in\L_{r_0}$,
\begin{equation}
\sup_{t\in[0,T']}\bigl| \langle\!\langle G_0,k_{t,\eps}-k_{t,V}\rangle\!\rangle\bigr|\rightarrow
0\quad \text{as } \eps\rightarrow 0.
\end{equation}
\end{proposition}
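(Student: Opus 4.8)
The plan is to extract the existence statement and the norm bounds directly from Theorem~\ref{th-cr}, and to deduce the last display from Theorem~\ref{th-conv-qo} via the forward/backward duality built into the proof of Theorem~\ref{th-cr}.

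First I would apply Theorem~\ref{th-cr} twice: to the generator $\widehat L_{\eps,\mathrm{ren}}=\eps L_0+W$ (for each fixed $\eps>0$) and to $W$ alone (the $\eps=0$ case). In both cases the hypotheses of Theorems~\ref{th1} and \ref{th-cr} are supplied by Proposition~\ref{opers}: under \eqref{c-1}--\eqref{c-4} and \eqref{dominate}, $(\eps L_0)^\n=\eps L_0^\n$ is a bounded generator of a strongly continuous contraction semigroup in $X_n$ with $\|(\eps L_0)^\n\|_{X_n\to X_n}\leq An(n-1)$ ($A=(c_1+c_2)/2$, restricting to $\eps\leq1$), the zero operator trivially is, and $W^\n$ is bounded from $X_{n-1}$ to $X_n$ with $\|W^\n\|\leq Bn(n-1)$, $B=\max\{1,c_3+c_4\}$. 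Since the $W$-part carries the same constant $B$ in all cases, Theorem~\ref{th-cr} yields, for every $t\in(0,T)$ with $T=1/(BC_0)$, functions $k_{t,\eps},k_{t,V}\in\K_{C_t}$ with $C_t$ as in \eqref{Ct}, satisfying the two weak equations in the statement, together with $\|k_{t,\eps}\|_{\K_{C_t}}\leq\|k_{0,\eps}\|_{\K_{C_0}}$ and $\|k_{t,V}\|_{\K_{C_t}}\leq\|k_{0,V}\|_{\K_{C_0}}$. This settles everything except the last display.

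For the convergence, fix $T'\in(0,T)$, $r_0>C_{T'}$ and $G_0\in\L_{r_0}$. Let $G_{t,\eps}$ be the solution of \eqref{Cauchy-eps} with initial datum $G_0$ and $G_{t,V}$ the solution of \eqref{Cauchy-V} with the same initial datum; by Theorem~\ref{th1}, $G_{t,\eps},G_{t,V}\in\L_{\rho(t,r_0)}$ with $\L_{\rho(t,r_0)}$-norms bounded by $\|G_0\|_{\L_{r_0}}$ (by \eqref{maincompar}). A direct computation from \eqref{Ct} and \eqref{weight} gives $\rho(T',C_{T'})=C_0$; since $\rho(t,\cdot)$ is strictly increasing and $\rho(\cdot,r_0)$ decreasing, $\rho(t,r_0)\geq\rho(T',r_0)>C_0$ for all $t\in[0,T']$, so $G_{t,\eps},G_{t,V}\in\L_{C_0}$ on this interval with norms $\leq\|G_0\|_{\L_{r_0}}$. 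Because $r_0>C_{T'}\geq C_t$ for $t\leq T'$, so that $G_0\in\L_{C_t}$, the identity \eqref{wq23} from the proof of Theorem~\ref{th-cr} applies and gives $\ll G_0,k_{t,\eps}\rr=\ll G_{t,\eps},k_{0,\eps}\rr$ and $\ll G_0,k_{t,V}\rr=\ll G_{t,V},k_{0,V}\rr$. Subtracting,
\[
\ll G_0,k_{t,\eps}-k_{t,V}\rr=\ll G_{t,\eps}-G_{t,V},\,k_{0,\eps}\rr+\ll G_{t,V},\,k_{0,\eps}-k_{0,V}\rr.
\]
The second term is $\leq\|G_0\|_{\L_{r_0}}\|k_{0,\eps}-k_{0,V}\|_{\K_{C_0}}\to0$ by \eqref{initconv}; the first is $\leq\|G_{t,\eps}-G_{t,V}\|_{\L_{C_0}}\|k_{0,\eps}\|_{\K_{C_0}}$, where $\sup_\eps\|k_{0,\eps}\|_{\K_{C_0}}<\infty$ again by \eqref{initconv}, while $\sup_{t\in[0,T']}\|G_{t,\eps}-G_{t,V}\|_{\L_{C_0}}\to0$ is exactly Theorem~\ref{th-conv-qo} applied with $C=r_0$, $G_{0,\eps}=G_{0,V}=G_0$ (so \eqref{init_conv} is trivial) and $r=C_0<\rho(T',r_0)$. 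Taking $\sup_{t\in[0,T']}$ concludes.

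I do not expect a genuine obstacle: the proposition is essentially a repackaging of Theorem~\ref{th-cr} (existence and stability) and Theorem~\ref{th-conv-qo} (continuity in $\eps$), transported through the duality \eqref{wq23}. The only care-demanding points are purely bookkeeping --- verifying the abstract hypotheses of Theorem~\ref{th-cr} for both $\eps L_0+W$ and $W$ with a common constant $B$, and tracking the Banach-space inclusions $\L_{r_0}\subset\L_{C_t}$ and $\L_{\rho(t,r_0)}\subset\L_{C_0}$ on $[0,T']$, which both rest on the single identity $\rho(T',C_{T'})=C_0$.
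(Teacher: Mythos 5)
Your proposal is correct and follows essentially the same route as the paper's proof: existence and the norm bounds come from Theorem~\ref{th-cr} applied to $\eps L_0+W$ and to $W$, and the convergence comes from the duality identity \eqref{wq23}, the splitting $\ll G_0,k_{t,\eps}-k_{t,V}\rr=\ll G_{t,\eps}-G_{t,V},k_{0,\eps}\rr+\ll G_{t,V},k_{0,\eps}-k_{0,V}\rr$, the bound $\|G_{t,V}\|_{\L_{C_0}}\leq\|G_0\|_{\L_{r_0}}$ via $\rho(T',C_{T'})=C_0$, and Theorem~\ref{th-conv-qo} with $C=r_0$, $r=C_0$, $T=T'$. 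The only difference is that you spell out the verification of the abstract hypotheses (common $B$, $\eps\leq1$) slightly more explicitly than the paper does, which is harmless.
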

\begin{proof}
The first part of the statement follows from  Theorem~\ref{th-cr}.
Since the function $[0,T']\ni t\mapsto C_t$ is (strictly) increasing,
we have $\bigl\{k_{t,V},k_{t,\eps}, \eps>0\bigr\}\subset
\K_{C_{T'}}\subset \K_{r_0}$. Moreover, by the proof of
Theorem~\ref{th-cr}, for any $G_0\in\L_{r_{0}}\subset \L_{C_t}$,
\begin{equation}\label{pairings}
\langle\!\langle G_0,k_{t,\eps}\rangle\!\rangle=\langle\!\langle G_{t,\eps},k_{0,\eps}\rangle\!\rangle, \quad \langle\!\langle G_0,k_{t,V}\rangle\!\rangle=\langle\!\langle G_{t,V},k_{0,V}\rangle\!\rangle,
\end{equation}
where $G_{t,\eps}$ and $G_{t,V}$ are solutions to \eqref{Cauchy-eps}
and \eqref{Cauchy-V}, respectively. Therefore, by Theorem~\ref{th1}, $\bigl\{G_{t,\eps}, G_{t,V}\bigr\}\subset
\L_{\rho(t,r_0)}$, where the function $\rho$
is given by \eqref{weight}. Since $\rho$ is (strictly) increasing in the second
variable and since $r_0>C_{T'}$, we have
$$\rho(T',r_0)>\rho(T',C_{T'})=C_0.$$
Applying Theorem~\ref{th-conv-qo} with $C=r_0$, $r=C_0$, $T=T'$, and $G_{0,\varepsilon }=G_{0,V}=G_{0}$
we obtain \begin{equation}
\sup_{t\in[0,T']}\bigl\Vert G_{t,\varepsilon }-G_{t,V}\bigr\Vert _{\L
_{C_0}}\rightarrow 0\quad \text{as } \varepsilon \rightarrow 0. \label{time_conv2}
\end{equation}
Then, by \eqref{pairings},
\begin{align}
&\quad\ \sup_{t\in[0,T']}\bigl|\langle\!\langle G_0,k_{t,\eps}-k_{t,V}\rangle\!\rangle \bigr|=\sup_{t\in[0,T']}\bigl|\langle\!\langle G_{t,\eps},k_{0,\eps}\rangle\!\rangle-\langle\!\langle G_{t,V},k_{0,V}\rangle\!\rangle \bigr|\nonumber\\&\leq\sup_{t\in[0,T']}\bigl|\langle\!\langle G_{t,\eps}-G_{t,V},k_{0,\eps}\rangle\!\rangle \bigr|+\sup_{t\in[0,T']}\bigl|\langle\!\langle G_{t,V},k_{0,\eps}-k_{0,V}\rangle\!\rangle \bigr|\nonumber\\&\leq\sup_{t\in[0,T']} \|G_{t,\eps}-G_{t,V}\|_{\L_{C_0}}\|k_{0,\eps}\|_{\K_{C_0}}\nonumber\\&\quad+\sup_{t\in[0,T']}\|G_{t,V}\|_{
\L_{C_0}} \|k_{0,\eps}-k_{0,V}\|_{\K_{C_0}}.\label{est5}
\end{align}
Since $k_{0,\eps}\rightarrow k_{0,V}$ in
$\K_{C_0}$, we get $\sup_{\eps>0}\|k_{0,\eps}\|_{\K_{C_0}}<\infty$. Then,
by \eqref{time_conv2}, the first summand in \eqref{est5} converges
to $0$ as $\eps\rightarrow0$. Next, by \eqref{maincompar},
\[
\|G_{t,V}\|_{
\L_{C_0}}=\|G_{t,V}\|_{
\L_{\rho(t,C_t)}} \leq \|G_{0}\|_{\L_{C_t}}\leq \|G_{0}\|_{\L_{C_{T'}}}\leq\|G_0\|_{\L_{r_0}}.
\]
Hence, by \eqref{initconv}, the second summand in \eqref{est5} also
converges to $0$ as $\eps\rightarrow0$, which proves  the second part of the statement.
\end{proof}

We will  now show that the evolution $k_{0,V}\mapsto
k_{t,V}$ satisfies our second requirement on the initial
scaling $L\mapsto L_\eps$, namely, $e_\la(p_0,\eta)\mapsto
e_\la(p_t,\eta)$.
\begin{proposition}
Let the conditions of Theorem~\ref{th-cr} hold. Then for any $G\in \Bbs$ and  any $k\in\K_C$ with $C>0$,
\begin{equation}\label{dualoper}
\int_{\Gamma _{0}}\left( WG\right) \left( \eta \right) k\left(
\eta
\right) d\lambda \left( \eta \right)=\int_{\Gamma _{0}}G\left( \eta \right) (W^*k)\left(
\eta
\right) d\lambda \left( \eta \right),
\end{equation}
where
\begin{align}
(W^*k)(\eta)=&\sum_{y_{1}\in \eta }\int_{\mathbb{R}^{d}}\int_{\mathbb{R}
^{d}}\tilde{c}\left(  x_{1},x_{2} ,
y_{1}\right)  k\left( \eta \cup x_{2}\cup x_{1}\setminus y_{1}\right)
dx_{2}dx_{1}\nonumber \\
& -\sum_{x_{1}\in \eta }\int_{\mathbb{R}^{d}}a_1(x_1,x_2)k\left( \eta \cup x_{2}\right) dx_{2}. \label{W-dual}
\end{align}
Here the functions $\tilde{c}$ and $a_1$ are defined by \eqref{c-tilde} and \eqref{a1},
respectively.
\end{proposition}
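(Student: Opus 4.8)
The plan is to deduce \eqref{dualoper} from the Mecke-type (Minlos) identity for the Lebesgue--Poisson measure,
\[
\int_{\Ga_0}\sum_{x\in\eta}H(x,\eta\setminus x,\eta)\,d\la(\eta)=\int_{\X}\int_{\Ga_0}H(x,\eta,\eta\cup x)\,d\la(\eta)\,dx,
\]
which holds for every measurable $H\ge 0$, and hence for every measurable $H$ for which the right-hand side, applied to $|H|$, is finite (see, e.g., \cite{KK2002}). First I would insert the expression \eqref{W} for $WG$ into $\ll WG,k\rr=\int_{\Ga_0}(WG)(\eta)\,k(\eta)\,d\la(\eta)$, rewrite $(\eta\setminus x_2)\setminus x_1$ as $\eta\setminus\{x_1,x_2\}$, and split $(WG)(\eta)$ into a ``gain'' part carrying $G(\eta\setminus\{x_1,x_2\}\cup y_1)$ and a ``loss'' part carrying $G(\eta\setminus x_2)$. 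In the loss part the $y_1$-integral can be carried out immediately: $\int_{\X}\tilde{c}(x_1,x_2,y_1)\,dy_1=a_1(x_1,x_2)$ by \eqref{c-tilde} and \eqref{a1}.

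Next I would apply the identity above twice --- first to the outer sum $\sum_{x_2\in\eta}$, then, for fixed $x_2$, to the sum $\sum_{x_1\in\eta}$ --- thereby converting the double sum into $\int_{\X}\int_{\X}(\cdot)\,dx_1\,dx_2$, replacing $k(\eta)$ by $k(\eta\cup x_1\cup x_2)$ and shifting the arguments of $G$ (using that $\la$-a.e.\ $x_1,x_2\notin\eta$ and $x_1\ne x_2$). At this stage the gain part reads
\[
\int_{\X}\int_{\X}\int_{\Ga_0}\int_{\X}\tilde{c}(x_1,x_2,y_1)\,G(\eta\cup y_1)\,k(\eta\cup x_1\cup x_2)\,dy_1\,d\la(\eta)\,dx_1\,dx_2,
\]
and the loss part reads $-\int_{\X}\int_{\X}\int_{\Ga_0}a_1(x_1,x_2)\,G(\eta\cup x_1)\,k(\eta\cup x_1\cup x_2)\,d\la(\eta)\,dx_1\,dx_2$. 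Then I would use the identity in the reverse direction: in the gain part, absorb $y_1$ into $\eta$ (write $\eta\setminus y_1$ for $\eta$ and sum over $y_1\in\eta$), which produces exactly the first sum in \eqref{W-dual}; in the loss part, absorb $x_1$ into $\eta$, which produces the second sum in \eqref{W-dual}. Adding the two contributions gives $\ll WG,k\rr=\ll G,W^{*}k\rr$, i.e.\ \eqref{dualoper}.

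The substantive (though elementary) step is to justify all the interchanges of summation and integration, i.e.\ to verify at each stage that the integrand lies in the appropriate $L^1$ space, so that Fubini's theorem and the Mecke-type identity apply. Here I would exploit that $G\in\Bbs$, so $G(\zeta)=0$ unless $\zeta\subset\La_G$ and $|\zeta|\le N_G$ for some $\La_G\in\Bb$ and $N_G\in\N$, together with $|k(\eta)|\le\|k\|_{\K_C}\,C^{|\eta|}$ from \eqref{ineq5}. The support condition confines $\eta$ (and $y_1$), so that the only unbounded integrations are those in $x_1,x_2$, and these are controlled by the hypotheses of Theorem~\ref{th-cr}: $\int_{\X}\int_{\X}\tilde{c}(x_1,x_2,y_1)\,dx_1\,dx_2=\int_{(\X)^{3}}c(x_1,x_2,y_1,y_2)\,dy_2\,dx_1\,dx_2\le c_4$ for a.a.\ $y_1$ by \eqref{c-4}, and $\int_{\X}a_1(x_1,x_2)\,dx_2=\int_{(\X)^{3}}c(x_1,x_2,y_1,y_2)\,dy_1\,dy_2\,dx_2\le c_3$ for a.a.\ $x_1$ by \eqref{a1} and \eqref{c-3}; the same bounds also show that $W^{*}k$ is finite $\la$-a.e.\ on the support of $G$. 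Consequently the main obstacle is purely bookkeeping: keeping track of which point is extracted from or absorbed into $\eta$ at each step, and re-checking integrability after each transformation.
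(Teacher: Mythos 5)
Your proposal is correct and follows essentially the same route as the paper: the paper's proof is precisely an application of the Mecke/Minlos-type identity for the Lebesgue--Poisson measure (cited there as Lemma~1 of \cite{FKK2010a}) to shift the points $x_1,x_2,y_1$ between sums over $\eta$ and the $dx_1\,dx_2\,dy_1$ integrals, after splitting $WG$ into gain and loss parts and integrating out $y_1$ in the loss part to produce $a_1$, with the well-definedness of both pairings simply asserted under \eqref{c-1}--\eqref{c-4}. Your explicit integrability bookkeeping, using the bounded support of $G\in\Bbs$, the bound \eqref{ineq5}, and the constants $c_3$, $c_4$, is just a spelled-out version of what the paper states in one line.
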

\begin{proof}
First, we  note that, under conditions \eqref{c-1}--\eqref{c-4}, for $G\in \Bbs$ and  $k\in\K_C$, both integrals
in \eqref{dualoper} are well defined.  Then, using \eqref{W} and e.g.\
\cite[Lemma 1]{FKK2010a}, we have
\begin{align*}
& \int_{\Gamma _{0}}\left( WG\right) \left( \eta \right) k\left(
\eta
\right) d\lambda \left( \eta \right) \\
=&\int_{\Gamma _{0}}\int_{\mathbb{R}^{d}}\sum_{x_{1}\in \eta }\int_{\mathbb{
R}^{d}}\tilde{c}\left(  x_{1},x_{2} ,
y_{1}\right)  G\left( \eta \setminus
x_{1}\cup y_{1}\right) dy_{1}k\left( \eta \cup x_{2}\right) dx_{2}d\lambda
\left( \eta \right) \\
& -\int_{\Gamma _{0}}\int_{\mathbb{R}^{d}}\sum_{x_{1}\in \eta }\int_{\mathbb{
R}^{d}}\tilde{c}\left(  x_{1},x_{2} ,
y_{1}\right)   G\left( \eta \right)
dy_{1}k\left( \eta \cup x_{2}\right) dx_{2}d\lambda \left( \eta \right) \\
=&\int_{\Gamma _{0}}\int_{\mathbb{R}^{d}}\int_{\mathbb{R}
^{d}}\sum_{y_{1}\in \eta }\tilde{c}\left(  x_{1},x_{2} ,
y_{1}\right)  G\left( \eta \right) k\left( \eta \cup x_{2}\cup x_{1}\setminus y_{1}\right)
dx_{2}dx_{1}d\lambda \left( \eta \right) \\
& -\int_{\Gamma _{0}}\int_{\mathbb{R}^{d}}\sum_{x_{1}\in \eta }\int_{\mathbb{
R}^{d}}\tilde{c}\left(  x_{1},x_{2} ,
y_{1}\right) G\left( \eta \right)
dy_{1}k\left( \eta \cup x_{2}\right) dx_{2}d\lambda \left( \eta \right) ,
\end{align*}
which proves the statement.
\end{proof}

Thus, for any $p_t\in L^\infty(\X)$,
\begin{align*}
(W^*e_\la(p_t))(\eta)=&\sum_{y\in \eta }e_\la(p_t,\eta\setminus y)\int_{\mathbb{R}^{d}}\int_{\mathbb{R}
^{d}}\tilde{c}\left(  x_{1},x_{2} ,
y\right)  p_t(x_1)p_t(x_2)dx_{2}dx_{1} \\
& -\sum_{y\in \eta }e_\la(p_t,\eta\setminus y)p_t(y)\int_{\mathbb{R}^{d}}a_1(y,x_2)p_t(x_2) dx_{2}.
\end{align*}
On the other hand, if $\frac{d}{d t}p_t$ exists, then
\[
\frac{\partial}{\partial t}e_\la(p_t,\eta)=\sum_{y\in \eta }e_\la(p_t,\eta\setminus y)\frac{\partial}{\partial t}p_t(y).
\]
Therefore, there exists a (point-wise) solution
$k_t=e_\la(p_t,\eta)$
of the initial value problem
\begin{equation}
\frac{\partial k_t}{\partial t}=W^*k_t,\qquad k_t\bigm|_{t=0}=e_\la(p_0,\eta),
\end{equation}
provided $p_t$ satisfies the non-linear Vlasov-type
equation
\begin{align}
\frac{\partial }{\partial t}p _{t}\left( x\right) =&\int_{\mathbb{R}
^{d}}\int_{\mathbb{R}^{d}}\tilde{c}(y_1,y_2,x) p _{t}\left( y_{1}\right) p
_{t}\left( y_{2}\right) dy_{1}dy_{2}\nonumber \\
& -p _{t}\left( x\right) \int_{\mathbb{R}^{d}}a_1(x,x_2) p _{t}\left( x_{2}\right) dx_{2}.
\label{Vlasoveqn}\end{align}
If the  symmetry condition \eqref{utdey7eu} holds, then we may rewrite \eqref{Vlasoveqn} in
the Boltzmann-type form
\begin{align}
\frac{\partial }{\partial t}p _{t}\left( x\right) =&\int_{\mathbb{R}
^{d}}\int_{\mathbb{R}^{d}}\int_{\mathbb{R}^{d}}c\left(
x,x_{2},y_{1},y_{2}\right)\nonumber\\&\qquad\times  \left[ p _{t}\left( y_{1}\right) p
_{t}\left( y_{2}\right) -p _{t}\left( x\right) p _{t}\left(
x_{2}\right) \right] dy_{1}dy_{2}dx_{2}.
\label{Boltzeqn}\end{align}

We are interested in positive bounded solutions of
\eqref{Vlasoveqn}.

\begin{proposition}\label{sol}
Let $C>0$ and let  $0\leq p _0\in L^\infty(\X)$
with $\|p_0\|_{L^\infty(\X)}\leq C$. Assume
that \eqref{c-1} and \eqref{c-4} hold and, moreover,
\begin{equation}\label{chr}
\int_\X\int_\X c(y,u_1,x,u_2)du_1du_2\leq\int_\X\int_\X c(x,y,u_1,u_2)du_1du_2.
\end{equation}
Then, for any $T>0$, there exists a function $0\leq p_t\in
L^\infty(\X)$, $t\in[0,T]$, which solves \eqref{Vlasoveqn} and,
moreover,
\begin{equation}\label{uniest}
\max_{t\in[0,T]}\|p_t\|_{L^\infty(\X)}\leq C.
\end{equation}
This functions is a unique non-negative solution to
\eqref{Vlasoveqn} which satisfies \eqref{uniest}.
\end{proposition}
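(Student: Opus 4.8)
The plan is to recast the Vlasov-type equation \eqref{Vlasoveqn} in mild (Duhamel) form with an integrating factor, to prove existence and uniqueness on a short time interval by a fixed-point argument, and to upgrade this to an arbitrary interval $[0,T]$ via an a priori $L^\infty$-bound; the bound \eqref{uniest} is in fact precisely where the hypothesis \eqref{chr} enters, and once the mild formulation is in place it comes essentially for free. Write the right-hand side of \eqref{Vlasoveqn} as $Q^{+}(p)(x)-p(x)Q^{-}(p)(x)$, where $Q^{+}(p)(x)=\int_{\X}\int_{\X}\tilde c(y_1,y_2,x)p(y_1)p(y_2)\,dy_1dy_2$ and $Q^{-}(p)(x)=\int_{\X}a_1(x,u)p(u)\,du$, with $\tilde c$ and $a_1$ as in \eqref{c-tilde}, \eqref{a1}. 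By \eqref{c-1} the kernel $a_1$ is bounded by $c_1$, and by \eqref{c-4} one has $\|Q^{+}(p)\|_{L^\infty(\X)}\le c_4\|p\|_{L^\infty(\X)}^2$; on the other hand $Q^{-}(p)(x)$ need \emph{not} be finite for $p\in L^\infty(\X)$, since the hypotheses control $a_1$ pointwise but not $\int_{\X}a_1(x,\cdot)\,du$. For this reason one keeps the loss term inside the exponential throughout: a solution is sought as a non-negative $p=(p_t)_{t\in[0,T]}\subset L^\infty(\X)$ satisfying
\begin{equation*}
p_t(x)=p_0(x)\,e^{-\int_0^t Q^{-}(p_s)(x)\,ds}+\int_0^t e^{-\int_s^t Q^{-}(p_\tau)(x)\,d\tau}\,Q^{+}(p_s)(x)\,ds ,
\end{equation*}
with the convention $e^{-\infty}:=0$; when $Q^{-}(p_{\cdot})(\cdot)$ is locally bounded this is equivalent to \eqref{Vlasoveqn}.

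The crucial elementary observation is that, for $0\le p\le C$, bounding $p(y_2)\le C$ and using \eqref{chr} in the form $\int_{\X}\int_{\X}c(y_1,y_2,x,y_3)\,dy_2dy_3\le a_1(x,y_1)$ yields $Q^{+}(p)(x)\le C\,Q^{-}(p)(x)$. Inserting this into the mild identity and using $\int_0^t e^{-\int_s^t Q^{-}(p_\tau)(x)d\tau}Q^{-}(p_s)(x)\,ds=1-e^{-\int_0^t Q^{-}(p_s)(x)ds}$ gives $0\le p_t(x)\le C$ for $\lambda$-a.a.\ $x$ and all $t$, whenever $0\le p_0\le C$. Hence \eqref{uniest} is automatic, and the convex set $\mathcal S_T:=\{\,p:[0,T]\to L^\infty(\X)\text{ measurable}\mid 0\le p_t\le C\text{ for all }t\,\}$ is invariant under the map $\Phi$ given by the right-hand side of the mild identity.

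For existence I would run a Picard/Banach iteration for $\Phi$ on $\mathcal S_{T_0}$ with $T_0$ depending only on $C$, $c_1$, $c_4$: using $|e^{-a}-e^{-b}|\le|a-b|$, the bilinearity of $Q^{+}$, and $pQ^{-}(p)-qQ^{-}(q)=pQ^{-}(p-q)+(p-q)Q^{-}(q)$, the increment $\Phi(p)-\Phi(q)$ is dominated by $\mathrm{const}\cdot C\cdot T_0$ times $\sup_s$ of a linear expression in $|p_s-q_s|$, so $\Phi$ is a contraction for small $T_0$. Since $\mathcal S_T$ is $\Phi$-invariant for every $T$, the contraction time $T_0$ does not shrink as one proceeds, and iterating the local step covers an arbitrary $[0,T]$. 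The one genuinely delicate point is that $Q^{-}(p)(x)$ may be $+\infty$, which makes the raw $L^\infty$ contraction estimate only formal; I would make it rigorous by first solving the problem with $c$ replaced by its truncation $c\wedge N$ restricted to a ball $B_N$ (so that $Q^{-}$ is bounded and the contraction is legitimate), producing solutions $p^{(N)}$, all of which satisfy $0\le p^{(N)}_t\le C$ by the displayed inequality above --- which survives truncation, since \eqref{chr} is used only as an upper bound on $Q^{+}$ --- and then passing to the limit $N\to\infty$ using this uniform bound.

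Uniqueness of the bounded non-negative solution follows from the same circle of ideas: for two such solutions the difference $w_t=p_t-q_t$ satisfies, via the mild identity and the estimates just listed, $\|w_t\|\le\mathrm{const}\cdot C\int_0^t\|w_s\|\,ds$ in the appropriate (possibly truncated) norm, whence $w\equiv0$ by Gronwall; alternatively, local uniqueness from the contraction together with the a priori bound gives it directly by a continuation argument. An alternative, purely Fock-space route is also available: by the computation preceding the proposition, $k_t:=e_\lambda(p_t)$ solves $\partial_t k_t=W^{*}k_t$ exactly when $p_t$ solves \eqref{Vlasoveqn}, so Theorem~\ref{th-cr} furnishes a unique short-time $k$-evolution in $\mathcal K_{C_t}$; showing this evolution preserves the Lebesgue--Poisson exponential form and then globalizing via \eqref{uniest} yields the same conclusion. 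Either way the main obstacle is handling the possibly non-integrable loss rate while retaining the a priori bound, and it is exactly the hypothesis \eqref{chr} that makes that bound available.
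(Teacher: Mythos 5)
Your core argument coincides with the paper's proof: the map you call $\Phi$ is exactly the paper's integrating-factor (mild) formulation \eqref{Phiexpr}; the bound $Q^{+}(p)\le C\,Q^{-}(p)$, obtained from \eqref{chr} after estimating $p(y_2)\le C$, combined with the identity $\int_0^t e^{-\int_s^t Q^{-}}Q^{-}\,ds\le 1$, is precisely how the paper shows that $\Phi$ maps the set $B_{T,C}^{+}$ into itself (whence \eqref{uniest}); and existence and uniqueness are then obtained, as in the paper, from a Banach fixed point on a short time interval whose length depends only on $C$ and the structural constants, continued step by step because the invariant bound $C$ is preserved at each step.

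The one place you depart from the paper is your treatment of a possibly infinite loss rate $Q^{-}(p)(x)=\int_{\X}a_1(x,u)p(u)\,du$. You are right that the hypotheses as listed (\eqref{c-1}, \eqref{c-4}, \eqref{chr}) control $a_1$ only pointwise; in fact the paper's own contraction estimates (the bounds $I_1,I_3\le Cc_3\|v-w\|_\Upsilon\Upsilon$ and the choice $2C(c_3+c_4)\Upsilon<1$) tacitly use \eqref{c-3}, i.e.\ $c_3<\infty$, so you have spotted a genuine imprecision in the statement. However, your proposed repair is not a proof as it stands. First, the claim that the contraction time depends only on $C,c_1,c_4$ is incorrect: the Lipschitz estimate for the exponential factors requires an upper bound on $\int_{\X}a_1(x,u)\,du$, that is $c_3$ (or, after your truncation, a constant that blows up with $N$). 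Second, ``passing to the limit $N\to\infty$ using the uniform bound $0\le p^{(N)}_t\le C$'' is not substantiated: an $L^\infty$ bound alone gives at best weak-$*$ compactness, which does not allow you to pass to the limit in the quadratic gain term $\int_{\X}\int_{\X}\tilde c(y_1,y_2,x)p(y_1)p(y_2)\,dy_1dy_2$, nor in the exponential of a loss integral that may diverge in the limit; and your uniqueness argument (Gronwall ``in the appropriate truncated norm'') again needs exactly the $c_3$-type bound you are trying to dispense with. If you add \eqref{c-3} to the hypotheses---as the paper's proof implicitly does, and as holds in the intended examples---your argument closes and is essentially identical to the paper's; without it, the truncation step requires a genuinely new ingredient (e.g.\ monotonicity of the approximation scheme or equicontinuity in $t$, plus a separate uniqueness argument), none of which is supplied.
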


\begin{proof}
Let us fix an arbitrary $T>0$ and define the Banach space
$X_T:=C([0,T],L^\infty(\X))$ of all continuous functions on $[0,T]$
with values in $L^\infty(\X)$; the norm on $X_T$ is given by
$$
\|u\|_T:=\max\limits_{t\in[0,T]}\|u_t\|_{L^\infty(\X)}.
$$
We denote by $X_T^+$ the cone of all nonnegative functions from
$X_T$. For a given $C>0$, denote  by $B_{T,C}^+$ the set of all
functions $u$ from $X_T^+$ with $\|u\|_T\leq C$.

Let $\Phi$ be a  mapping which
assigns to any $v\in X_T$ the solution
$u_t$
of the linear Cauchy problem
\begin{equation}\label{le}
\begin{aligned}
\frac{\partial }{\partial t}u_{t}\left( x\right) =&-u _{t}\left( x\right) \int_{\mathbb{R}^{d}}a_1(x,y) v_{t}\left( y\right) dy\\
&+\int_{\mathbb{R}
^{d}}\int_{\mathbb{R}^{d}}\tilde{c}(y_1,y_2,x) v_{t}\left( y_{1}\right) v_{t}\left( y_{2}\right) dy_{1}dy_{2},\\
u_t(x)\Bigr|_{t=0}=&\,p_0(x),
\end{aligned}
\end{equation}
namely,
\begin{align}\nonumber
(\Phi v)_t(x)&=\exp\biggl\{ -\int_0^t \int_{\mathbb{R}^{d}}a_1(x,y) v_{s}\left( y\right) dyds\biggr\}p_0(x)\\&\quad+\int_0^t
\exp\biggl\{ -\int_s^t \int_{\mathbb{R}^{d}}a_1(x,y) v_{\tau}\left( y\right) dyd\tau\biggr\}\nonumber\\&\qquad\times\int_{\mathbb{R}
^{d}}\int_{\mathbb{R}^{d}}\tilde{c}(y_1,y_2,x) v_{s}\left( y_{1}\right) v_{s}\left( y_{2}\right) dy_{1}dy_{2}ds.\label{Phiexpr}
\end{align}
Clearly, $v_t\geq0$ implies $(\Phi v)_t\geq0$.
Moreover, $v\in X_T^{+}$ yields
\[
\bigl|(\Phi v_t)(x)\bigr|\leq |p_0(x)|+c_4T\|v\|_T^2.
\]
Therefore, $\Phi:X_T^{+}\rightarrow X_T^{+}$. Obviously,
$u_t$ solves \eqref{Vlasoveqn} if and only
if $u$ is a fixed point of the map $\Phi$.

Since $0\leq p_0(x)\leq C$ for a.a. $x\in\X$,
 we get from
\eqref{Phiexpr} and \eqref{chr},  for any $v\in B_{T,C}^+$,
\begin{align*}
0\leq(\Phi v)_t(x)&\leq C\exp\biggl\{ -\int_0^t \int_{\mathbb{R}^{d}}a_1(x,y) v_{s}\left( y\right) dyds\biggr\}\\&\quad+C\int_0^t
\exp\biggl\{ -\int_s^t \int_{\mathbb{R}^{d}}a_1(x,y) v_{\tau}\left( y\right) dyd\tau\biggr\}\int_{\mathbb{R}
^{d}}a_1(x,y) v_{s}\left( y\right) dyds\\&\leq C\exp\biggl\{ -\int_0^t \int_{\mathbb{R}^{d}}a_1(x,y) v_{s}\left( y\right) dyds\biggr\}\\&\quad
+C\int_0^t \frac{\partial}{\partial s}
\exp\biggl\{ -\int_s^t \int_{\mathbb{R}^{d}}a_1(x,y) v_{\tau}\left( y\right) dyd\tau\biggr\}ds=C.\nonumber
\end{align*}
 Therefore, $\Phi: B_{T,C}^+\rightarrow B_{T,C}^+$.

Let us choose any $\Upsilon\in(0,T)$ such that
$2C(c_3+c_4)\Upsilon<1$. Clearly, $v\in B^+_{T,C}$ implies $v\in
B^+_{\Upsilon,C}$. By the proved above,  $\Phi:
B_{\Upsilon,C}^+\rightarrow B_{\Upsilon,C}^+$. Note  the elementary
inequalities  $|e^{-a}-e^{-b}|\leq |a-b|$ and
\[
|pe^{-a}-qe^{-b}|\leq e^{-a}|p-q|+qe^{-b}|e^{-(a-b)}-1|\leq
e^{-a}|p-q|+qe^{-b}|a-b|,
\]
which hold for any $a,b,p,q\geq0$. Hence, for any $v,w\in
B_{\Upsilon,C}^+$, $t\in[0,\Upsilon]$, we get
\begin{align*}
&\quad\bigl| (\Phi v)_t(x)-(\Phi w)_t(x)\bigr|\\&\leq\biggl| \int_0^t \int_{\mathbb{R}^{d}}a_1(x,y) v_{s}\left( y\right) dyds-\int_0^t \int_{\mathbb{R}^{d}}a_1(x,y)w_{s}\left( y\right) dyds\biggr| p_0(x)\\&\quad+\int_0^t
\exp\biggl\{ -\int_s^t \int_{\mathbb{R}^{d}}a_1(x,y) v_{\tau}\left( y\right) dyd\tau\biggr\}\nonumber\\&\qquad\times\biggl|\int_{\mathbb{R}
^{d}}\int_{\mathbb{R}^{d}}\tilde{c}(y_1,y_2,x) v_{s}\left( y_{1}\right) v_{s}\left( y_{2}\right) dy_{1}dy_{2}\\&\qquad\qquad-\int_{\mathbb{R}
^{d}}\int_{\mathbb{R}^{d}}\tilde{c}(y_1,y_2,x)w_{s}\left( y_{1}\right) w_{s}\left( y_{2}\right) dy_{1}dy_{2}\biggr|ds\\&\quad+\int_0^t
\exp\biggl\{ -\int_s^t \int_{\mathbb{R}^{d}}a_1(x,y)w_{\tau}\left( y\right) dyd\tau\biggr\}\nonumber\\&\qquad\times\int_{\mathbb{R}
^{d}}\int_{\mathbb{R}^{d}}\tilde{c}(y_1,y_2,x)w_{s}\left( y_{1}\right) w_{s}\left( y_{2}\right) dy_{1}dy_{2}\\&\qquad\times
\biggl|\int_s^t \int_{\mathbb{R}^{d}}a_1(x,y) v_{\tau}\left( y\right) dyd\tau - \int_s^t \int_{\mathbb{R}^{d}}a_1(x,y) w_{\tau}\left( y\right) dyd\tau\biggr|ds\\&=:I_1+I_2+I_3.
\end{align*}
By \eqref{c-1}, \eqref{c-4}, and \eqref{chr},
we have
\begin{align*}
I_1&\leq Cc_3\|v-w\|_\Upsilon\Upsilon,\\
I_3&\leq Cc_3\|v-w\|_\Upsilon t\int_0^t \frac{\partial}{\partial s}
\exp\biggl\{ -\int_s^t \int_{\mathbb{R}^{d}}a_1(x,y) w_{\tau}\left( y\right) dyd\tau\biggr\}ds\\&\leq
Cc_3\|v-w\|_\Upsilon \Upsilon,
\\I_2&\leq2Cc_4\|v-w\|_\Upsilon\Upsilon.
\end{align*}
Therefore, $$\|\Phi v-\Phi w\|_\Upsilon \leq 2C(c_3+c_4)\Upsilon
\|v-w\|_\Upsilon$$ for any $v,w\in B_{\Upsilon,C}^+$. Since
$B_{\Upsilon,C}^+$ is a metric space (with the metric induced by the norm
$\|\cdot\|_\Upsilon$) and since $2C(c_3+c_4)\Upsilon<1$, there exists a
unique $u^*\in B_{\Upsilon,C}^+$ such that $u^*=\Phi(u^*)$. Hence,
$u^*_t$ solves \eqref{Vlasoveqn} for $t\in[0,\Upsilon]$. Since
$0\leq u^*_\Upsilon(x)\leq C$ for a.a.\ $x\in\X$, we can consider
the equation \eqref{Vlasoveqn} with the initial value
$p_t(x)\bigr|_{t=\Upsilon} =u^*_\Upsilon(x)$. Then we obtain a
unique non-negative solution which satisfies
$\max_{t\in[\Upsilon,2\Upsilon]}\|u_t\|_{L^\infty(\X)}\leq C$, and so
on. As a result, we obtain a solution of \eqref{Vlasoveqn} on $[0,T]$.
The uniqueness among all solutions from $B_{T,C}^+$ is now obvious.
\end{proof}

\begin{remark}
Note that, in the proof of
Proposition~\ref{sol}, we did not  use the property \eqref{lhgu6r75}.  On the other hand,
all considerations remain true if, instead  of
\eqref{chr}, we assume that
\begin{equation}\label{chrsym}
\int_\X\int_\X c(u_1,y,x,u_2)du_1du_2\leq\int_\X\int_\X c(x,y,u_1,u_2)du_1du_2.
\end{equation}
Suppose we have an expansion
\begin{equation}\label{sum}
c(\{x_1,x_2\},\{y_1,y_2\})=c'(x_1,x_2,y_1,y_2)+c''(x_1,x_2,y_1,y_2)
\end{equation}
where $c',c''$ are functions which satisfy
conditions \eqref{chr} and \eqref{chrsym}, respectively, as well
as conditions \eqref{c-1} and  \eqref{c-4} (but do not necessarily satisfy \eqref{lhgu6r75}). It is easy to see that all considerations in
the proof of Proposition~\ref{sol} remain true for this $c$. Let us give a quite natural example of functions $c',\, c''$.
\end{remark}

\begin{example}[cf. \cite{FKKL2011a}]
Let $c$ be given by \eqref{sum} with
\[
c'(x_1,x_2,y_1,y_2)=\varkappa
a(x_1-y_1)a(x_2-y_2)\bigl[b(x_1-x_2)+b(y_1-y_2)\bigr].
\] and
$$c''(x_1,x_2,y_1,y_2)=c'(x_2,x_1,y_1,y_2).$$
Here $\varkappa>0$, $0\leq a,b\in L^1(\X)\cap L^\infty (\X)$,
$\|a\|_{L^1(\X)}=\|b\|_{L^1(\X)}=1$ and $b$ is an even function.
Then the condition \eqref{chrsym} for $c''$ coincides with the
condition \eqref{chr} for $c'$. Note also that
\eqref{c-1}--\eqref{c-4} are satisfied  for $c$. For example, $c_4=4\varkappa$
and
\[
c_1\leq2\varkappa
\|b\|_{L^\infty(\X)}+2\varkappa\|a\|_{L^\infty(\X)}
\|b\|_{L^\infty(\X)}<\infty.
\]
Next, let us check whether \eqref{chr} holds for $c'$. We have
\begin{align*}
&\quad\ \varkappa\int_\X\int_\X
c'(y,u_1,x,u_2)du_1du_2\\&=\varkappa\int_\X\int_\X
a(y-x)a(u_1-u_2)\bigl(b(y-u_1)+b(x-u_2)\bigr)du_1du_2= 2\varkappa
a(y-x)
\end{align*}
and
\begin{align*}
&\quad \varkappa \int_\X\int_\X
c'(x,y,u_1,u_2)du_1du_2\\&=\varkappa\int_\X\int_\X
a(x-u_1)a(y-u_2)\bigl(b(x-y)+b(u_1-u_2)\bigr)du_1du_2\\&= \varkappa
b(x-y) +\varkappa\int_\X\int_\X a(x-u_1)a(y-u_2)b(u_1-u_2)du_1du_2.
\end{align*}
Since $b$ is even,  \eqref{chr} holds if, for example, $2
a(x)\leq b(x)$, $x\in\X$.
\end{example}

\section*{Acknowledgements}

 The authors acknowledge the financial support of the SFB 701 ``Spectral
structures and topological methods in mathematics'', Bielefeld
University.


\begin{thebibliography}{10}

\bibitem{AKR1998a}
Albeverio, S., Kondratiev, Y., and R{\"o}ckner, M., ``Analysis and
geometry on
  configuration spaces,'' J. Funct. Anal. {\bf 154}, 444--500 (1998).

\bibitem{BK2003}
Belavkin, V.~P. and Kolokol'tsov, V.~N., ``On a general kinetic
equation
  for many-particle systems with interaction, fragmentation and coagulation,'' R.
  Soc. Lond. Proc. Ser. A Math. Phys. Eng. Sci. {\bf 459}, 727--748 (2003).

\bibitem{BMT1981}
Belavkin, V.~P., Maslov, V.~P., and Tariverdiev, S.~{\`E}, ``The
asymptotic
  dynamics of a system with a large number of particles described by
  {K}olmogorov-{F}eller equations,'' Teoret. Mat. Fiz. {\bf 49}, 298--306 (1981).

\bibitem{BKKK2011}
Berns, C., Kondratiev, Y., Kozitsky, Y., and Kutoviy, O., ``Kawasaki
dynamics in continuum: micro- and mesoscopic descriptions,'' University of Bielefeld, SFB 701
Preprint 11035 (2011).

\bibitem{EN2000}
Engel, K.-J. and Nagel, R., {\em One-parameter semigroups for linear
  evolution equations\/}, vol. 194 of {\em Graduate Texts in
  Mathematics\/}, (Springer-Verlag, 2000).

\bibitem{FKKoz2011}
Finkelshtein, D., Kondratiev, Y., and Kozitsky, Y., ``Glauber
dynamics in
  continuum: a constructive approach to evolution of states,'' arXiv:1104.2250.


\bibitem{FKK2010a}
Finkelshtein, D., Kondratiev, Y., and Kutoviy, O., ``Vlasov scaling
for stochastic dynamics of continuous systems,'' J. Statistical
Phys. {\bf 141}, 158--178 (2010).

\bibitem{FKK2010b}
Finkelshtein, D., Kondratiev, Y., and Kutoviy, O., ``Vlasov scaling
for the {G}lauber dynamics in~continuum,''
http://arxiv.org/abs/1002.4762.

\bibitem{FKK2010c}
Finkelshtein, D., Kondratiev, Y., and Kutoviy, O., ``Operator
approach to {V}lasov
  scaling for~some~models~of~spatial ecology,'' University of Bielefeld, SFB 701
Preprint 11036 (2011)

\bibitem{FKKL2011a}
Finkelshtein, D., Kondratiev, Y., Kutoviy, O., and Lytvynov, E.,
``Binary jumps in
  continuum. {I}. {E}quilibrium processes and their scaling limits,''
  arXiv:1101.4765v1, to appear in J. Math. Phys. (2011).

\bibitem{FKL2007}
Finkelshtein, D., Kondratiev, Y., and Lytvynov, E., ``Equilibrium
{G}lauber dynamics of continuous particle systems as a scaling limit
of {K}awasak dynamics,'' Random Oper. Stoch. Equ. {\bf 15}, 105--126
(2007).

\bibitem{FKO2009}
Finkelshtein, D., Kondratiev, Y., and Oliveira, M.~J., ``Markov
evolutions and hierarchical equations in the continuum. {I}.
{O}ne-component systems,'' J. Evol. Equ. {\bf 9}, 197--233 (2009).

\bibitem{Glo1982}
Gl{\"o}tzl, E., ``Time reversible and {G}ibbsian point processes.
{II}.{M}arkovian particle jump processes on a general phase space,''
Math. Nachr. {\bf 106}, 63--71 (1982).

\bibitem{Kol2006}
Kolokoltsov, V.~N., ``Kinetic equations for the pure jump models of
{$k$}-nary
  interacting particle systems,'' Markov Process. Related Fields {\bf 12}, 95--138
  (2006).

\bibitem{KK2002}
Kondratiev, Y. and Kuna, T., ``Harmonic analysis on configuration
space. {I}.  {G}eneral theory,'' Infin. Dimens. Anal. Quantum
Probab. Relat. Top. {\bf 5},
  201--233 (2002).
  
\bibitem{KK2006}
Kondratiev, Y. and Kutoviy, O.,
\newblock On the metrical properties of the configuration space.
\newblock {\em Math. Nachr.}, \textbf{279(7)}, 774--783 (2006).
 
\bibitem{KKL2008}
Kondratiev, Y., Kutoviy, O., and Lytvynov, E., ``Diffusion
approximation for equilibrium {K}awasaki dynamics in continuum,''
Stochastic Process. Appl. {\bf 118}, 1278--1299 (2008).

\bibitem{KLR2007}
Kondratiev, Y., Lytvynov, E., and R{\"o}ckner, M., ``Equilibrium
{K}awasaki dynamics of continuous particle systems,'' Infin. Dimens.
Anal. Quantum Probab.  Relat. Top. {\bf 10}, 185--209 (2007).

\bibitem{KLR2008}
Kondratiev, Y., Lytvynov, E., and R{\"o}ckner, M., ``Non-equilibrium
stochastic  dynamics in continuum: The free case,'' Cond. Matter
Phys. {\bf 11}, 701--721 (2008).


\bibitem{LL2011}
Lytvynov, E. and Li, G., ``A note on equilibrium {G}lauber and
{K}awasaki dynamics for permanental point processes,'' Methods
Funct. Anal. Topology {\bf 17}, 29--46 (2011).

\bibitem{LO2008}
Lytvynov, E. and Ohlerich, N., ``A note on equilibrium {G}lauber and
{K}awasaki dynamics for fermion point processes,'' Methods Funct.
Anal. Topology {\bf 14},  67--80 (2008).

\bibitem{Len1973}
Lenard, A., ``Correlation functions and the uniqueness of the state
in classical statistical mechanics,'' Comm. Math. Phys. {\bf 30},
35--44 (1973).

\bibitem{Len1975}
Lenard, A., ``States of classical statistical mechanical systems of
infinitely
  many particles. {I},'' Arch. Rational Mech. Anal. {\bf 59}, 219--239 (1975).

\bibitem{Len1975a}
Lenard, A., ``States of classical statistical mechanical systems of
infinitely
  many particles. {II},'' {C}haracterization of correlation measures. Arch.
  Rational Mech. Anal. {\bf 59}, 241--256 (1975).

\bibitem{LF1961}
Lumer, G. and Phillips, R.~S., ``Dissipative operators in a {B}anach
space,''
  Pacific J. Math. {\bf 11}, 679--698 (1961).

\bibitem{Rue1969}
Ruelle, D. {\em Statistical mechanics: {R}igorous results\/}, (W. A.
  Benjamin, Inc., New York-Amsterdam, 1969).

\bibitem{Rue1970}
Ruelle, D., ``Superstable interactions in classical statistical
mechanics,'' Comm.  Math. Phys. {\bf 18}, 127--159 (1970).

\end{thebibliography}
\end{document}